\theoremstyle{plain}
\newtheorem{theorem}{Theorem}[section]
\newtheorem{definition}[theorem]{Definition}
\newtheorem{lemma}[theorem]{Lemma}
\newtheorem{remark}[theorem]{Remark}
\newtheorem{corollary}[theorem]{Corollary}
\numberwithin{equation}{section}
\theoremstyle{definition}
\newenvironment{claim}[1]{\par\noindent\textbf{Claim:} \space#1 \begin{itshape}
}{\end{itshape}}
\def\divergence{\mathop{\rm div}}
\def\dist{\mathop{\rm dist}}
\def\diam{\mathop{\rm diam}}
\newcommand{\R}{\ensuremath{\mathbb{R}}}
\title[Nonexistence to quasilinear parabolic equations in bounded domains]{Nonexistence of solutions to quasilinear parabolic equations with a potential in bounded domains}
\author{Giulia Meglioli}
\author{Dario D. Monticelli}
\author{Fabio Punzo}
\address[G.\ Meglioli]{Dipartimento di Matematica \newline\indent
	Politecnico di Milano, Milano, Italy}
\email{giulia.meglioli@polimi.it}
\address[D.D.\ Monticelli]{Dipartimento di Matematica \newline\indent
	Politecnico di Milano, Milano, Italy}
\email{dario.monticelli@polimi.it}
\address[F.\ Punzo]{Dipartimento di Matematica \newline\indent
	Politecnico di Milano, Milano, Italy}
\email{fabio.punzo@polimi.it}
\subjclass[2020]{35A01, 35K92, 35R45}
\keywords{Parabolic inequalities on domains; weighted volume growth; nonexistence of solutions; Green function.}
\date{}
\begin{document}

\maketitle              

\begin{abstract}
We are concerned with nonexistence results for a class of quasilinear parabolic differential problems with a potential in $\Omega\times(0,+\infty)$, where $\Omega$ is a bounded domain. In particular, we investigate how the behavior of the potential near the boundary of the domain and the power nonlinearity affect the nonexistence of solutions. Particular attention is devoted to the special case of the semilinear parabolic problem, for which we show that the critical rate of growth of the potential near
the boundary ensuring nonexistence is sharp.
\end{abstract}

\bigskip

\section{Introduction}
We investigate nonexistence of nonnegative, nontrivial global weak solutions to quasilinear parabolic inequalities of the following type:
\begin{equation}\label{problema}
\begin{cases}
\partial_t u - \divergence\left(|\nabla u|^{p-2}\nabla u\right)\ge V\, u^q &\quad \text{in}\,\, \Omega \times (0,+\infty) \\
\,\,u=0 \quad&\quad \text{on}\,\,\partial \Omega \times (0,+\infty) \\
\,\, u=u_0 \quad&\quad \text{in}\,\,\Omega\times\{0\}\,;
\end{cases}
\end{equation}
where $\Omega$ is an open bounded connected subset of $ \R^N$, $N\ge3$, $p>1$ and $q>\max\{p-1,1\}$. Furthermore, we assume that $V\in L^1_{{loc}}(\Omega\times[0,\infty))$, with $V>0$ a.e. in $\Omega\times(0,+\infty)$, and the initial condition satisfies $u_0\in L^1_{loc}(\Omega)$, with $u_0\ge 0$ a.e in $ \Omega$.

Global existence and finite time blow-up of solutions for problem \eqref{problema} has been deeply studied when $\Omega=\mathbb{R}^N$, see e.g. \cite{G1, G2, G3, Mitidieri, Mitidieri2, Poho, PT} and references therein. In particular, in \cite{Mitidieri2}, nonexistence of nontrivial weak solutions is proved for problem \eqref{problema} when $\Omega=\R^N$, $V\equiv 1$ and
$$
p>\frac{2N}{N+1},\quad\quad \max\{1,p-1\}<q\le p-1+\frac{p}{N}\,.
$$

Moreover, problem \eqref{problema} has been investigated also in the Riemannian setting, see e.g. \cite{BPT, MaMoPu, Punzo, Sun1, Zhang} and references therein.
In \cite{MaMoPu} problem \eqref{problema} is studied when $\Omega=M$ is a complete, $N$-dimensional, noncompact Riemannian manifold; it is investigated nonexistence of nonnegative nontrivial weak solutions depending on the interplay between the geometry of the underlying manifold, the power nonlinearity and the behavior of the potential at infinity, assuming that $u_0\in L^1_{loc}(M)$, $u\ge0$ a.e. in $M$ and $V\in L^1_{loc}(M\times[0,+\infty))$, $V>0$ a.e. in $M$.

Furthermore, we mention that nonexistence results of nonnegative nontrivial weak solutions have been also much investigated for solutions to elliptic quasilinear equation of the form
\begin{equation}\label{elliptic}
\frac{1}{a(x)}\divergence\left(a(x)|\nabla u|^{p-2}\nabla u\right)+ V(x) u^q\le 0 \quad \text{in}\,\,M\,,
\end{equation}
where
$$
a>0,\,\,\, a\in \text{Lip}_{loc}(M), \quad\,\,\, V>0\,\,\,\text{a.e. on}\,\, M, \,\,\, V\in L^1_{loc}(M),
$$
$p>1$, $q>p-1$ and $M$ can be the Euclidean space $\mathbb{R}^N$ or a general Riemannian manifold.

We refer to \cite{Dam, MitPoh1, MitPoh2, Mitidieri, Mitidieri2} for a comprehensive description of results related to problem \eqref{elliptic}, and also to more general problems, on $\R^N$. Problem \eqref{elliptic} when $M$ is a complete noncompact Riemannian manifold has been considered e.g. in \cite{Grig1, Grig2, MMP2, Sun2, Sun3}. In particular, in \cite{MMP2} the authors studied how the geometry of the underlying manifold $M$ and the behavior of the potential $V$ at infinity affect the nonexistence of nonnegative nontrivial weak solutions for inequality \eqref{elliptic}.
Finally, we mention that \eqref{elliptic} posed on an open relatively compact connected domain $\Omega \subset \mathbb{R}^N$ has been studied in \cite{MoPu}. Under the assumptions that
$$
a>0,\,\,\, a\in \text{Lip}_{loc}(\Omega), \quad\,\,\, V>0\,\,\,\text{a.e. on}\,\, \Omega, \,\,\, V\in L^1_{loc}(\Omega),
$$
$p>1$, $q>p-1$, the authors investigate the relation between the behavior of the potential $V$ at the boundary of $\Omega$ and nonexistence of nonnegative weak solutions.

\medskip

In the present paper, we are concerned with nonnegative weak solutions to problem \eqref{problema}. Under suitable weighted volume growth assumptions involving $V$ and $q$, we obtain nonexistence of global weak solutions (see Theorems \ref{teorema1}, \ref{teorema2}). The proofs are mainly based on the choice of a family of suitable test functions, depending on two parameters, that enables us to deduce first some appropriate a priori estimates, then that the unique global solution is $u\equiv 0$.
Such test functions are defined by adapting to the present situation those used in \cite{MaMoPu}; however, some important differences occur, since in \cite{MaMoPu} an unbounded underlying manifold is considered, whereas now we consider a bounded domain. In some sense, the role of {\it infinity} of \cite{MaMoPu} is now played by the boundary $\partial \Omega$. Obviously, this implies that such test functions satisfy different properties. To the best of our knowledge, the definition and use of such test functions are new.

As a special case, we consider in particular the semilinear parabolic problem
\begin{equation}\label{problema2}
\begin{cases}
\partial_t u - \Delta u = V u^q &\quad \text{in}\,\, \Omega \times (0,+\infty) \\
u=0 \quad&\quad \text{on}\,\,\partial \Omega \times (0,+\infty) \\
u=u_0\quad&\quad \text{in}\,\,\Omega\times\{0\}\,,
\end{cases}
\end{equation}
where $q>1$, $u_0\in L^1_{loc}(\Omega)$, $u_0\ge 0$ a.e. in $\Omega$, $V\in L^1_{loc}(\Omega\times[0,+\infty))$, with $V\ge0$, i.e. problem \eqref{problema} with $p=2$.

As a consequence of our general results, we infer that nonexistence of global solutions for problems \eqref{problema} and \eqref{problema2} prevails, when $$V(x,t)\ge Cd(x)^{-\sigma_1} \quad \text{for a.e. }\,\, x\in \Omega,\,t\in[0,+\infty)$$ for some $C>0$ and $$\sigma_1>q+1,$$ where
\begin{equation}\label{eq20}
d(x):=\dist(x,\partial \Omega)\quad\text{for any}\,\,\,x\in\overline{\Omega}.
\end{equation}
Furthermore, we show the sharpness of this result for the semilinear problem \eqref{problema2} in case $\partial\Omega$ is regular enough and $V=V(x)$ is continuous and independent of $t$. Indeed, under the assumption that
$$0\leq V(x)\le C d(x)^{-\sigma_1}\quad \text{for all }\,\, x\in \Omega$$
for some $C>0$ and
 $$0\leq\sigma_1<q+1,$$ we prove the existence of a global classical solution for problem \eqref{problema2} (see Theorem \ref{teorema3}), if the initial datum $u_0$ is small enough. This existence result is obtained by means of the sub-- and supersolution's method. In particular, we  construct a supersolution to problem \eqref{problema2}, which is actually a supersolution of the associated stationary equation. Such supersolution
is obtained as the fixed point of a suitable contraction map. In order to show that such a fixed point exists, we need to estimate some integrals involving the Green function associated to the Laplace operator $-\Delta$ in $\Omega$ (see Lemmas \ref{lemma4}, \ref{lemma5}). 
Finally, we study the {\it slightly supercritical} case
$$V(x,t)\ge d(x)^{-q-1}f(d(x))^{q-1}\quad \text{for a.e. }\,\, x\in \Omega,\,t\in[0,+\infty),$$
where $f$ is a function satisfying suitable assumptions and such that $\lim_{\varepsilon\rightarrow0^+}f(\varepsilon)=+\infty$, for which we prove nonexistence of nonnegative nontrivial weak solutions in $\Omega\times(0,+\infty)$. The proof of this result require a different argument with respect to the previous nonexistence results, which makes use of linearity of the operator and of the special form of the potential. Then the critical rate of growth $d(x)^{-q-1}$ as $x$ approaches $\partial\Omega$ is indeed sharp for the nonexistence of solutions to problem \eqref{problema2}.

The paper is organized as follows. In Section \ref{statements} we describe our main results and some consequences for problem \eqref{problema} (see Theorems \ref{teorema1}, \ref{teorema2} and Corollaries \ref{corollario1}, \ref{corollario3}); in particular in Subsection \ref{semilinearstatemets} we give the statements of our results for the semilinear problem \eqref{problema2} (see Theorems \ref{teorema3}, \ref{teorema4} and Corollary \ref{corollario4}). The definition of weak solutions and some preliminary results are stated in Section \ref{preliminaries}. Finally we prove the results obtained for problem \eqref{problema} in Sections \ref{proofnonlin} and \ref{proofnonlin2}, while the proofs of the results concerning the semilinear problem \eqref{problema2} are shown in Sections \ref{prooflin} and \ref{prooflin2}.

\section{Statements of the main results}\label{statements}
We now introduce the following two hypotheses (HP$1$) and (HP$2$) under which we will prove nonexistence of weak solutions for problem \eqref{problema}. Let $\theta_1\ge 1$, $\theta_2\ge1$, for each $\delta>0$  we define
\begin{equation}\label{eq21}
S:=\Omega\times[0,+\infty) \quad \text{and}\quad E_{\delta}:=\left\{(x,t)\in S\,:\,\, d(x)^{-\theta_2}+t^{\theta_1}\le \delta^{-\theta_2}\right\}.
\end{equation}
Moreover let
\begin{equation}\label{eq22}
\begin{aligned}
&\bar{s_1}:= \frac{q}{q-1}\theta_2\,, \quad \bar{s_2}:= \frac{1}{q-1}\,, \\
&\bar{s_3}:= \frac{pq}{q-p+1}\theta_2\,, \quad \bar{s_4}:= \frac{p-1}{q-p+1}\,.
\end{aligned}
\end{equation}

\begin{itemize}
\item[(HP$1$)] Assume that there exist constants $\theta_1\ge 1$, $\theta_2\ge1$, $C_0\ge0$, $C>0$, $\delta_0\in(0,1)$ and $\varepsilon_0>0$ such that
\begin{itemize}
\item[(i)]  for some $0<s_2<\bar{s_2}$
\begin{equation}\label{eq23}
\int_{E_{\frac{\delta}{2}}\setminus E_{\delta}} t^{(\theta_1-1)\left(\frac{q}{q-1}-\varepsilon\right)}V^{-\frac{1}{q-1}+\varepsilon} \,dx dt \,\le\, C\delta^{-\bar{s_1}-C_0\varepsilon} \left|\log(\delta)\right|^{s_2}
\end{equation}
for any $\delta\in(0,\delta_0)$ and for any $\varepsilon\in(0,\varepsilon_0)$;
\item[(ii)] for some $0<s_4<\bar{s_4}$
\begin{equation}\label{eq24}
\int_{E_\frac{\delta}{2}\setminus E_{\delta}} d(x)^{-(\theta_2+1)p\left(\frac{q}{q-p+1}-\varepsilon\right)}V^{-\frac{p-1}{q-p+1}+\varepsilon} \,dx dt \,\le\, C\delta^{-\bar{s_3}-C_0\varepsilon} \left|\log(\delta)\right|^{s_4}
\end{equation}
for any $\delta\in(0,\delta_0)$ and for any $\varepsilon\in(0,\varepsilon_0)$.
\end{itemize}
\end{itemize}

\begin{itemize}
\item[(HP$2$)] Assume that there exist constants $\theta_1\ge 1$, $\theta_2\ge1$, $C_0\ge0$, $C>0$, $\delta_0\in(0,1)$ and $\varepsilon_0>0$ such that
\begin{itemize}
\item[(i)]  for any $\delta\in(0,\delta_0)$ and for any $\varepsilon\in(0,\varepsilon_0)$
\begin{equation}\label{eq25}
\int_{E_\frac{\delta}{2}\setminus E_{\delta}} t^{(\theta_1-1)\left(\frac{q}{q-1}-\varepsilon\right)}V^{-\frac{1}{q-1}+\varepsilon} \,dx dt \,\le\, C\delta^{-\bar{s_1}-C_0\varepsilon} \left|\log(\delta)\right|^{\bar{s_2}},
\end{equation}
\begin{equation}\label{eq26}
\int_{E_\frac{\delta}{2}\setminus E_{\delta}} t^{(\theta_1-1)\left(\frac{q}{q-1}+\varepsilon\right)}V^{-\frac{1}{q-1}-\varepsilon} \,dx dt \,\le\, C\delta^{-\bar{s_1}-C_0\varepsilon} \left|\log(\delta)\right|^{\bar{s_2}};
\end{equation}
\item[(ii)] for any $\delta\in(0,\delta_0)$ and for any $\varepsilon\in(0,\varepsilon_0)$
\begin{equation}\label{eq27}
\int_{E_\frac{\delta}{2}\setminus E_{\delta}} d(x)^{-(\theta_2+1)p\left(\frac{q}{q-p+1}-\varepsilon\right)}V^{-\frac{p-1}{q-p+1}+\varepsilon} \,dx dt \,\le\, C\delta^{-\bar{s_3}-C_0\varepsilon} \left|\log(\delta)\right|^{\bar{s_4}},
\end{equation}
\begin{equation}\label{eq28}
\int_{E_\frac{\delta}{2}\setminus E_{\delta}} d(x)^{-(\theta_2+1)p\left(\frac{q}{q-p+1}+\varepsilon\right)}V^{-\frac{p-1}{q-p+1}-\varepsilon} \,dx dt \,\le\, C\delta^{-\bar{s_3}-C_0\varepsilon} \left|\log(\delta)\right|^{\bar{s_4}}.
\end{equation}
\end{itemize}

\end{itemize}
We can now state our main results.
\begin{theorem}\label{teorema1}
Let $p>1$, $q>\max\{p-1,1\}$, $V\in L^1_{loc}(\Omega\times[0,+\infty))$, $V>0$ a.e. in $\Omega\times(0,+\infty)$ and $u_0\in L^1_{loc}(\Omega)$, $u_0\ge 0$ a.e. in $\Omega$. Assume that condition (HP$1$) holds. If $u$ is a nonnegative weak solution of problem \eqref{problema}, then $u= 0$ a.e. in $S$.
\end{theorem}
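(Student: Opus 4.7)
The plan is to apply the Baras--Pierre/Mitidieri--Pohozaev test function method, with a family of cutoffs tailored to the sets $E_\delta$; this is the bounded-domain analogue of the constructions in \cite{MaMoPu}, where now $\partial\Omega$ together with $t=+\infty$ plays the role of the ``point at infinity.'' The starting point is the integral inequality
\begin{equation*}
\int_S V u^q\,\varphi\,dx\,dt \,\leq\, \int_S u\,|\partial_t\varphi|\,dx\,dt \,+\, \int_S |\nabla u|^{p-1}|\nabla\varphi|\,dx\,dt,
\end{equation*}
obtained from the weak formulation of \eqref{problema} by testing against a nonnegative $\varphi\in C^1$ which vanishes on $\partial\Omega\times[0,+\infty)$ and on $\Omega\times\{T\}$ for a large $T$, integrating by parts in $t$ and in $x$, and discarding the nonpositive contribution from the initial datum since $u_0\ge 0$. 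As cutoff I take $\varphi_\delta:=\xi_\delta^\lambda$ with $\lambda$ large, where $\xi_\delta(x,t):=\chi\bigl(\delta^{\theta_2}(d(x)^{-\theta_2}+t^{\theta_1})\bigr)$ and $\chi\in C^\infty([0,\infty);[0,1])$ is a standard nonincreasing smooth cutoff, $\chi=1$ on $[0,1]$ and $\chi=0$ on $[2^{\theta_2},\infty)$. Then $\varphi_\delta\equiv 1$ on $E_\delta$, $\mathrm{supp}\,\varphi_\delta\subset E_{\delta/2}$, and on the annular region $E_{\delta/2}\setminus E_\delta$ one has $|\partial_t\varphi_\delta|\le C\,\xi_\delta^{\lambda-1}\delta^{\theta_2}t^{\theta_1-1}$ and $|\nabla\varphi_\delta|\le C\,\xi_\delta^{\lambda-1}\delta^{\theta_2}d(x)^{-\theta_2-1}$.

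Fix a small $\varepsilon\in(0,\varepsilon_0)$. I would apply Young's inequality pointwise with the $\varepsilon$-perturbed conjugate pair around $(q,q/(q-1))$ to the time integrand, producing a term proportional to $V^{-\frac{1}{q-1}+\varepsilon}|\partial_t\varphi_\delta|^{\frac{q}{q-1}-\varepsilon'}\varphi_\delta^{-\frac{1}{q-1}+\varepsilon''}$, and with the perturbed pair around $(q/(p-1),q/(q-p+1))$ to the gradient integrand---the latter executed via a preliminary Young's step with pair $(p/(p-1),p)$ which reduces $|\nabla u|^{p-1}|\nabla\varphi_\delta|$ to a sum of the form $|\nabla u|^p\varphi_\delta^{\ldots}+|\nabla\varphi_\delta|^p\varphi_\delta^{-(p-1)}$, followed by a second Young's step handling the residual $|\nabla u|^p\varphi_\delta^{\ldots}$. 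This produces a term proportional to $V^{-\frac{p-1}{q-p+1}+\varepsilon}|\nabla\varphi_\delta|^{\frac{pq}{q-p+1}-\varepsilon'}\varphi_\delta^{\ldots}$, matching exactly the integrand appearing in (HP$1$)(ii). Taking $\lambda$ large enough that every power of $\varphi_\delta$ arising is nonnegative and absorbing the two $\tfrac{1}{4}Vu^q\varphi_\delta$ contributions into the left-hand side yields an estimate whose right-hand side, after substituting the derivative bounds above and invoking (HP$1$)(i)--(ii), is dominated by $C\,\delta^{-(\theta_2+C_0)\varepsilon}\bigl(|\log\delta|^{s_2}+|\log\delta|^{s_4}\bigr)$.

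The concluding step is a joint limit: since $s_2<\bar{s_2}$ and $s_4<\bar{s_4}$ are \emph{strict}, one lets $\varepsilon=\varepsilon(\delta)\to 0^+$ slowly (for instance $\varepsilon\sim c/|\log\delta|$) so that the factor $\delta^{-(\theta_2+C_0)\varepsilon}$ remains bounded, and then exploits the strict gaps $s_i<\bar{s_i}$ through an additional H\"older interpolation against the (partially-controlled) quantity $\int_{E_\delta}Vu^q$, which drives the entire right-hand side to $0$ as $\delta\to 0^+$. Monotone convergence then gives $\int_S Vu^q=0$, and since $V>0$ a.e.\ this forces $u\equiv 0$ a.e.\ in $S$. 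I expect the main technical obstacle to be the gradient term in the second step: unlike $u$ in the time integrand, the factor $|\nabla u|^{p-1}$ does not pair directly with $u^q$ via a single pointwise Young---hence the two-step procedure described above---and the bookkeeping of the $\varepsilon$-perturbed exponents must be done so that the resulting integrands align precisely with those in (HP$1$)(ii). The companion passage to the limit $\delta\to 0^+$, which hinges crucially on the strict inequalities $s_i<\bar{s_i}$ in order to beat the logarithmic divergence, is correspondingly subtle.
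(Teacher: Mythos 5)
Your proposal follows the right general philosophy (test functions adapted to the sets $E_\delta$, exploitation of (HP$1$)), but it has two genuine gaps, and the paper's proof avoids both by a mechanism you have not reproduced.

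First, the gradient term. Starting from $\int_S Vu^q\varphi\le\int_S u|\partial_t\varphi|+\int_S|\nabla u|^{p-1}|\nabla\varphi|$ and applying Young to $|\nabla u|^{p-1}|\nabla\varphi|$ with the pair $(p/(p-1),p)$ leaves a residual term $\int|\nabla u|^p\varphi^{\dots}$ for which there is \emph{no} absorbing term on the left-hand side: $\int Vu^q\varphi$ gives no control whatsoever on $\int|\nabla u|^p\varphi^{\dots}$, and for $p\neq 2$ you cannot integrate by parts a second time to move the quasilinear operator onto $\varphi$. The standard remedy -- and the one the paper uses via Lemma \ref{lemma1} -- is to test against $u^\alpha\varphi^s$ with a small $\alpha<0$, which produces the good term $|\alpha|\int|\nabla u|^pu^{\alpha-1}\varphi^s$ on the left; only then can the cross term be absorbed, at the price of the constants $|\alpha|^{-\frac{(p-1)q}{q-p+1}}$ and of working with $\int Vu^{q+\alpha}\varphi^s$ rather than $\int Vu^q\varphi^s$. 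Your two-step Young procedure, as described, does not close.

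Second, the concluding limit. Your cutoff $\varphi_\delta=\xi_\delta^\lambda$ concentrates all derivatives on the single annulus $E_{\delta/2}\setminus E_\delta$, and a single application of (HP$1$) there yields at best a bound of order $|\log\delta|^{s_2}+|\log\delta|^{s_4}$, which \emph{diverges}; choosing $\varepsilon\sim 1/|\log\delta|$ only tames the factor $\delta^{-C_0\varepsilon}$, not the logarithms, and the ``H\"older interpolation against $\int_{E_\delta}Vu^q$'' you invoke is not an argument under (HP$1$) (that interpolation is the strategy of Theorem \ref{teorema2} under (HP$2$), and it requires first proving $u^q\in L^1(S,V\,dxdt)$, which your bound does not give). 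The paper instead takes $\varphi=\big[(d(x)^{-\theta_2}+t^{\theta_1})\delta^{\theta_2}\big]^{C_1\alpha}$ outside $E_\delta$ with $\alpha=1/\log\delta$: its derivatives carry an explicit factor $|\alpha|$ and are spread over \emph{all} of $E_\delta^C$, and the dyadic summation over the annuli $E_{\delta/2^{n+1}}\setminus E_{\delta/2^n}$ (the Claim leading to \eqref{eq420}) converts the hypothesis into $\int_0^{2\delta}z^{b-1}|\log z|^{s_4}dz\sim b^{-s_4-1}$ with $b\sim|\alpha|$. The net count of powers of $|\alpha|$ is then exactly $|\alpha|^{\bar{s_4}-s_4}+|\alpha|^{\bar{s_2}-s_2}\to0$, which is where the strict inequalities $s_i<\bar{s_i}$ enter. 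Without the power-law test function, the extra parameter $\alpha$, and the dyadic summation, this cancellation is unavailable and the proof does not conclude.
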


\begin{theorem}\label{teorema2}
Let $p>1$, $q>\max\{p-1,1\}$, $V\in L^1_{loc}(\Omega\times[0,+\infty))$, $V>0$ a.e. in $\Omega\times(0,+\infty)$ and $u_0\in L^1_{loc}(\Omega)$, $u_0\ge 0$ a.e. in $\Omega$. Assume that condition (HP$2$) holds. If $u$ is a nonnegative weak solution of problem \eqref{problema}, then $u= 0$ a.e. in $S$.
\end{theorem}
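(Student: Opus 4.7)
The proof follows the same general scheme as that of Theorem \ref{teorema1}. We test the weak formulation of \eqref{problema} against a two-parameter cutoff $\phi_{\delta,\varepsilon}$ that equals $1$ on $E_\delta$, is supported in $E_{\delta/2}$, and whose derivatives satisfy bounds of the form $|\partial_t\phi_{\delta,\varepsilon}|\lesssim t^{\theta_1-1}\phi_{\delta,\varepsilon}^{1-c(\varepsilon)}$ and $|\nabla\phi_{\delta,\varepsilon}|\lesssim d(x)^{-(\theta_2+1)}\phi_{\delta,\varepsilon}^{1-c(\varepsilon)}$ on the annular region $E_{\delta/2}\setminus E_\delta$, with $c(\varepsilon)\to 0$ as $\varepsilon\to 0^+$. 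Applying Young's inequality with an $\varepsilon$-perturbation of the conjugate exponents $q/(q-1)$ (for the time derivative term) and $pq/(q-p+1)$ (for the gradient term), we absorb a small multiple of $\int V u^q \phi_{\delta,\varepsilon}\,dx\,dt$ on the left-hand side and reduce to estimating the integrals that appear on the left-hand sides of \eqref{eq25} and \eqref{eq27}; the $\varepsilon$ in the Young step is precisely what produces the perturbed exponents present in those integrands.

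The main obstacle, absent in Theorem \ref{teorema1}, is that under (HP$2$) the powers of $|\log\delta|$ on the right-hand sides are the critical $\bar s_2, \bar s_4$ (rather than strictly smaller). Consequently a single application of the step above, combined with the $\delta^{\bar s_1}$, $\delta^{\bar s_3}$ scaling coming from $\phi_{\delta,\varepsilon}$, yields only
$$\int_{E_\delta} V u^q\, dx\, dt \,\le\, C\, \delta^{-C_0 \varepsilon}\bigl(|\log \delta|^{\bar s_2} + |\log \delta|^{\bar s_4}\bigr),$$
which does not vanish as $\delta\to 0^+$ for any fixed $\varepsilon>0$. To circumvent this we exploit the second family of estimates \eqref{eq26} and \eqref{eq28}: the plan is to rerun the Young step in parallel with the opposite sign of the $\varepsilon$-perturbation, and then combine the two outputs via Hölder's inequality on $E_{\delta/2}\setminus E_\delta$. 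The symmetry $\pm\varepsilon$ in (HP$2$), together with the choice $\varepsilon = \eta/|\log \delta|$ for $\eta$ small (so that $\delta^{-C_0\varepsilon}$ stays uniformly bounded), should yield the uniform-in-$\delta$ bound $\int_{E_\delta} V u^q \le C$, and hence $Vu^q\in L^1(S)$ by monotone convergence.

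To upgrade this finiteness to vanishing, once $Vu^q\in L^1(S)$ is known, absolute continuity of the integral gives $\int_{E_{\delta/2}\setminus E_\delta} V u^q\, dx\, dt\to 0$ as $\delta\to 0^+$; inserting this gain back into the estimates arising from the weak formulation kills the residual $|\log\delta|^{\bar s_2}$, $|\log\delta|^{\bar s_4}$ factors and produces $\int_{E_\delta} V u^q\, dx\, dt\to 0$. Since $V>0$ a.e.\ in $S$, we conclude $u\equiv 0$ a.e.\ in $S$. The technical heart of the proof, and the only genuinely new ingredient with respect to Theorem \ref{teorema1}, is the interpolation between the two $\varepsilon$-sided families of estimates in (HP$2$) and the balancing $\varepsilon = \varepsilon(\delta)$ which neutralizes the critical logarithms; everything else (the construction of $\phi_{\delta,\varepsilon}$, the justification of its use in the weak formulation, and the absorption arguments) is parallel to Theorem \ref{teorema1} and relies on the notion of weak solution set up in Section \ref{preliminaries}.
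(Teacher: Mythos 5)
Your high-level architecture is the right one and matches the paper's: first establish a uniform-in-$\delta$ bound giving $Vu^q\in L^1(S)$, then use absolute continuity of the integral to upgrade boundedness to vanishing. You also correctly identify the obstruction (the critical exponents $\bar s_2,\bar s_4$ make the Theorem~\ref{teorema1} argument yield only a constant) and the role of the choice $\varepsilon\sim 1/|\log\delta|$. However, the mechanism you propose for both steps does not work as described, and this is where the real content of the proof lies. For the uniform bound: a Young/absorption step (Lemma~\ref{lemma1}) with the $\alpha$-power test function controls only $\int Vu^{q+\alpha}\varphi_n^s$, and ``running it in parallel with the opposite sign of the $\varepsilon$-perturbation and interpolating by H\"older'' gains nothing --- H\"older interpolation between \eqref{eq25} and \eqref{eq26} merely reproduces the $\varepsilon=0$ estimate. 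What actually closes the argument is a H\"older splitting of the gradient term in the weak formulation (Lemma~\ref{lemma2}/Corollary~\ref{corollario2}), which produces the self-improving inequality $X_n\le A\,X_n^{(p-1)/p}+B$ with $A,B$ independent of $n,\delta$; the sublinear power $\tfrac{p-1}{p}<1$ is the decisive structural feature. It is also this splitting, not a two-sided interpolation, that explains why (HP$2$) must contain both signs of the perturbation: since $\alpha<0$, the conjugate $V$-integrals $J_3,J_4$ arising from the H\"older step carry exponents perturbed in the direction opposite to those of the Young step ($\tfrac{1}{q+\alpha-1}>\tfrac{1}{q-1}$), so \eqref{eq26} and \eqref{eq28} are consumed there.

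For the second step, ``inserting the gain $\int_{E_{\delta/2}\setminus E_\delta}Vu^q\to 0$ back into the estimates'' cannot kill the critical logarithms if the underlying estimate is of Young/absorption type: after absorbing $\tfrac1q\int Vu^q\varphi^s$ on the left, the remainder is a pure $V$-integral carrying the factors $|\log\delta|^{\bar s_2},|\log\delta|^{\bar s_4}$, with no tail integral of $Vu^q$ multiplying it, so there is nothing for the gain to act on. One needs an inequality in which $\int_{E_\delta}Vu^q$ is dominated by \emph{positive powers of} $\int_{E_\delta^C}Vu^q$ times quantities that remain bounded as $\delta\to0$; this is precisely Lemma~\ref{lemma3}, which yields
\begin{equation*}
\int\!\!\int_{E_\delta}Vu^q\,dx\,dt\le C\Big[\Big(\int\!\!\int_{E_\delta^C}Vu^q\,dx\,dt\Big)^{\frac{(1-\alpha)(p-1)}{p}}+\Big(\int\!\!\int_{E_\delta^C}Vu^q\,dx\,dt\Big)^{\frac1q}\Big],
\end{equation*}
after which Fatou and the already-established integrability finish the proof. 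Both lemmas are stated in the preliminaries, so the repair is available to you, but as written your proposal replaces the essential H\"older-splitting structure with steps that would not produce the required multiplicative tail factors.
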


As a consequence of Theorem \ref{teorema1} we introduce Corollary \ref{corollario1}. Let $d(x)$ be defined as in \eqref{eq20} and \eqref{eq21} respectively. Moreover we introduce functions $h:\Omega\to \R$ and $g:(0,+\infty)\to \R$ such that
\begin{align}
h(x)&\ge C \,d(x)^{-\sigma_1}\left(\log\left(1+d(x)^{-1}\right)\right)^{-\delta_1}& &\text{for a.e.}\,\,x\in\Omega, \label{eq210}\\
0<g(t)&\le C\,(1+t)^{\alpha}& &\text{for a.e.}\,\,t\in(0,+\infty),\label{eq211}
\end{align}
where $\sigma_1, \delta_1, \alpha \ge 0$, $C>0$. We can now state

\begin{corollary}\label{corollario1}
Let $p>1$, $q>\max\{p-1, 1\}$ and $u_0\in L^1_{loc}(\Omega)$, $u_0\ge 0$ a.e. in $\Omega$. Suppose that $V\in L^1_{loc}(\Omega\times[0,+\infty))$ satisfies
\begin{equation}\label{eq29}
V(x,t)\ge h(x)g(t)\quad\, \text{for a.e.}\,\,(x,t)\in S,
\end{equation}
where $h$ and $f$ satisfy \eqref{eq210} and \eqref{eq211} respectively. Moreover suppose that
\begin{equation}\label{eq212}
\begin{aligned}
&\int_0^T g(t)^{-\frac{1}{q-1}}\,dt \le CT^{\sigma_2} \left(\log T\right)^{\delta_2}, \\
&\int_0^T g(t)^{-\frac{p-1}{q-p+1}}\,dt \le CT^{\sigma_4},
\end{aligned}
\end{equation}
for $T>1$, $\sigma_2$, $\sigma_4$, $\delta_2\ge 0$ and $C>0$. Finally assume that
\begin{itemize}
\item[(i)] $\sigma_1>q+1$;
\item[(ii)] $0\le \sigma_2 \le \frac{q}{q-1}$;
\item[(iii)] $\delta_1< 1\,\,\,$ and $\,\,\,\delta_2<\frac{1-\delta_1}{q-1}$.
\end{itemize}
If $u$ is a nonnegative weak solution of problem \eqref{problema}, then $u= 0$ a.e. in $S$.
\end{corollary}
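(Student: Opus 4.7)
The strategy is to invoke Theorem~\ref{teorema1}, reducing the proof to the verification of hypothesis (HP$1$) for an appropriate choice of parameters; I take $\theta_2 = 1$ and $\theta_1 \ge 1$ to be chosen sufficiently large at the end. Using the lower bound \eqref{eq29} on $V$ together with \eqref{eq210}, and noting that $\frac{1}{q-1} - \varepsilon > 0$ and $\frac{p-1}{q-p+1} - \varepsilon > 0$ for small $\varepsilon > 0$, one gets the pointwise bound
\[
V(x,t)^{-\kappa} \,\le\, C\, d(x)^{\sigma_1 \kappa}\, \bigl(\log(1+d(x)^{-1})\bigr)^{\delta_1 \kappa}\, g(t)^{-\kappa}
\]
for the relevant positive values of $\kappa$, reducing (HP$1$)(i)--(ii) to integrals of products of elementary factors in $d(x)$, $\log(1+d(x)^{-1})$, $t$, and $g(t)$.

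\textbf{Decomposition and estimates.} Since in $E_{\delta/2}\setminus E_\delta$ one has $\delta^{-\theta_2} < d(x)^{-\theta_2} + t^{\theta_1} \le 2^{\theta_2}\delta^{-\theta_2}$, I decompose $E_{\delta/2}\setminus E_\delta \subset R_1 \cup R_2$ where $R_1 = \{d(x)^{-\theta_2} \ge \frac{1}{2}\delta^{-\theta_2}\} \cap E_{\delta/2}$ and $R_2 = \{t^{\theta_1} \ge \frac{1}{2}\delta^{-\theta_2}\} \cap E_{\delta/2}$. In $R_1$ the space variable lies in a boundary shell $\{d(x) \sim \delta\}$ of Lebesgue measure $O(\delta)$ with $t \in [0, C\delta^{-\theta_2/\theta_1}]$; in $R_2$ one has $t \sim \delta^{-\theta_2/\theta_1}$ and $d(x) \ge \delta/2$. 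Each piece is contained in a product domain, so by Fubini the spatial and temporal integrals separate. The temporal part is handled by writing $g^{-\kappa + \varepsilon} \le C(1+t)^{\alpha\varepsilon} g^{-\kappa}$ via \eqref{eq211}, after which \eqref{eq212} yields the desired bound up to a harmless factor $T^{\alpha\varepsilon}$; in $R_1$ the spatial integral exploits the shell measure together with $d(x)^{\sigma_1\kappa} \le C\delta^{\sigma_1\kappa}$ and $\log(1+d(x)^{-1}) \le C|\log\delta|$, while in $R_2$ it is either a constant or decays like $\delta^{\mathrm{exponent}+1}$, depending on whether the resulting power of $d$ lies above or below $-1$.

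\textbf{Verification of (HP$1$) and main obstacle.} Collecting all contributions in (HP$1$)(i), the non-$\varepsilon$ exponent of $\delta$ exceeds $-\bar{s}_1 = -q\theta_2/(q-1)$ iff $\sigma_2 \le q/(q-1)$ (assumption (ii)), while the combined exponent of $|\log\delta|$ is $\delta_1/(q-1) + \delta_2$, strictly less than $\bar{s}_2 = 1/(q-1)$ iff $\delta_1 + (q-1)\delta_2 < 1$ (assumption (iii)). For (HP$1$)(ii), invoking the identity $-pq + q - p + 1 = -(p-1)(q+1)$, the excess of the non-$\varepsilon$ $\delta$-exponent over $-\bar{s}_3$ equals
\[
\frac{(p-1)(\sigma_1 - q - 1)}{q-p+1} \;-\; \frac{\theta_2 \sigma_4}{\theta_1},
\]
which is nonnegative once $\theta_1$ is chosen large enough, precisely thanks to $\sigma_1 > q+1$ (assumption (i)); the log exponent $\delta_1(p-1)/(q-p+1)$ is strictly less than $\bar{s}_4$ iff $\delta_1 < 1$. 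The main obstacle is the careful bookkeeping of all the exponents: arranging that the $\varepsilon$-perturbations combine to $\delta^{-C_0\varepsilon}$ uniformly in $\varepsilon \in (0,\varepsilon_0)$, and choosing $\theta_1$ large enough that the time-integral contribution $\sigma_4/\theta_1$ is absorbed by the slack coming from $\sigma_1 > q+1$.
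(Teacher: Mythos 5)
Your proposal is correct and follows essentially the same route as the paper: both reduce to verifying (HP$1$) by bounding $V^{-\kappa}$ via $h$ and $g$, separating the spatial integral (controlled through powers of $d(x)$ and the shell/boundedness of $\Omega_{\delta/2}$) from the temporal one (controlled through \eqref{eq212}), and checking exactly the exponent identities you record, with $\theta_2/\theta_1$ small absorbing the $\sigma_4$ contribution thanks to $\sigma_1>q+1$. The only cosmetic difference is that the paper encloses $E_{\delta/2}\setminus E_\delta$ in a single product domain $\Omega_{\delta/2}\times[0,(\delta/2)^{-\theta_2/\theta_1}]$ rather than splitting into your $R_1\cup R_2$, which changes nothing in the final bookkeeping.
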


As an immediate consequence of Corollary \ref{corollario1}, choosing $g(t)\equiv1$, $\sigma_2=\sigma_4=1$ and $\delta_1=\delta_2=0$, we obtain the following

\begin{corollary}\label{corollario3}
Let $p>1$, $q>\max\{p-1, 1\}$ and $u_0\in L^1_{loc}(\Omega)$, $u_0\ge 0$ a.e. in $\Omega$. Suppose that $V\in L^1_{loc}(\Omega\times[0,+\infty))$ satisfies
\begin{equation}\label{eq36}
V(x,t)\ge Cd(x)^{-\sigma_1}\quad\, \text{for a.e.}\,\,(x,t)\in S,
\end{equation}
with $\sigma_1>q+1$. If $u$ is a nonnegative weak solution of problem \eqref{problema}, then $u= 0$ a.e. in $S$.
\end{corollary}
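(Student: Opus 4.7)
The plan is to deduce this result as a direct specialization of Corollary \ref{corollario1}. I would set $h(x) := C\,d(x)^{-\sigma_1}$ and $g(t) \equiv 1$, so that the hypothesis \eqref{eq36} becomes precisely of the form \eqref{eq29}. The function $h$ satisfies \eqref{eq210} with the choice $\delta_1 = 0$ (indeed the inequality is saturated), while $g$ satisfies \eqref{eq211} with $\alpha = 0$ and any $C \ge 1$.

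It then remains to verify the integral bounds \eqref{eq212} and the three numerical conditions in Corollary \ref{corollario1}. Since $g \equiv 1$, a direct computation yields
\begin{equation*}
\int_0^T g(t)^{-\frac{1}{q-1}}\,dt = T, \qquad \int_0^T g(t)^{-\frac{p-1}{q-p+1}}\,dt = T \qquad \text{for all } T > 1,
\end{equation*}
so \eqref{eq212} holds with $\sigma_2 = \sigma_4 = 1$ and $\delta_2 = 0$. As for the three conditions of Corollary \ref{corollario1}: (i) is precisely the hypothesis $\sigma_1 > q+1$; (ii) reads $0 \le 1 \le q/(q-1)$, which is satisfied because $q > 1$ (so $q/(q-1) > 1$); (iii) reduces to $\delta_1 = 0 < 1$ and $\delta_2 = 0 < 1/(q-1)$, both of which are trivially true in view of $q > 1$.

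Having checked all hypotheses, Corollary \ref{corollario1} applies and gives $u \equiv 0$ a.e. in $S$. Since the argument is only a verification of the specialization, I do not expect any genuine obstacle; the entire content lies in ensuring the constants match the normalization used in Corollary \ref{corollario1}, and no analytical work is required beyond what has already been done there.
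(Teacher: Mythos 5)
Your proposal is correct and is exactly the paper's argument: the authors obtain Corollary \ref{corollario3} from Corollary \ref{corollario1} by the same specialization $g\equiv1$, $\sigma_2=\sigma_4=1$, $\delta_1=\delta_2=0$. All the verifications you carry out are routine and accurate.
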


\subsection{Further result for semilinear problems}\label{semilinearstatemets} We prove, for the semilinear problem \eqref{problema2}, an existence result when $V=V(x)$ is continuous and independent of $t$ and $$0\leq V(x)\leq Cd(x)^{-\sigma_1}, \quad x\in \Omega,$$ with $$0\leq\sigma_1<q+1$$ (see Theorem \ref{teorema3}). Then we show a nonexistence result that yield that all nonnegative solutions of \eqref{problema2} are trivial if $V$ blows up at the boundary $\partial\Omega$ faster than $d(x)^{-q-1}$ (see Theorem \ref{teorema4} and Corollary \ref{corollario4} for precise statements).

\begin{theorem}\label{teorema3}
Suppose that $\partial\Omega$ is of class $C^3$ and let $u_0\in C(\Omega)$, $u_0\ge 0$ in $\Omega$, be such that there exists $\varepsilon>0$ such that
\begin{equation}\label{eq214}
0\le u_0\le \varepsilon \,d(x) \quad \text{for any} \,\,x\in\overline{\Omega}.
\end{equation}
Moreover let $V\in C(\Omega)$, $V\geq0$ in $\Omega$ and assume that for some $C>0$
\begin{equation}\label{eq215}
V=V(x) \le Cd(x)^{-\sigma_1}  \quad \text{for any} \,\,x\in\overline{\Omega}.
\end{equation}
with
\begin{equation}\label{eq216}
0\leq\sigma_1<q+1.
\end{equation}
Then problem \eqref{problema2} admits a classical solution $u$ in $\Omega\times(0,+\infty)$ if $\varepsilon>0$ is small enough.
\end{theorem}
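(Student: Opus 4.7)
The plan is to apply the sub-- and supersolution method: $\underline u \equiv 0$ is trivially a nonnegative classical subsolution of \eqref{problema2}, so the whole task reduces to constructing a nonnegative classical \emph{supersolution} $\bar u$ with $\bar u(x)\ge u_0(x)$ in $\Omega$ and $\bar u=0$ on $\partial\Omega$. I would look for $\bar u=\bar u(x)$ independent of $t$: then $\bar u$ is a supersolution of \eqref{problema2} if and only if it satisfies the stationary inequality $-\Delta\bar u\ge V\bar u^q$ in $\Omega$. Once such a $\bar u$ is available, a standard monotone iteration between $0$ and $\bar u$, combined with interior parabolic regularity (the source term $V u^q$ is bounded on compact subsets of $\Omega$ since $0\le u\le\bar u\le M\,d(x)$ and $V\in C(\Omega)$), yields the required global classical solution with $0\le u\le\bar u$.

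To construct $\bar u$, let $G(x,y)$ denote the Dirichlet Green function of $-\Delta$ in $\Omega$ and let $\phi\in C^2(\Omega)\cap C(\overline\Omega)$ solve $-\Delta\phi=1$ in $\Omega$ with $\phi=0$ on $\partial\Omega$; by the Hopf lemma and $\partial\Omega\in C^3$ we have $c_0 d(x)\le\phi(x)\le C_0 d(x)$ on $\overline\Omega$. For a constant $K>0$ to be fixed I would define
\begin{equation*}
Tw(x) := K\varepsilon\,\phi(x) + \int_\Omega G(x,y)\,V(y)\,w(y)^q\,dy, \qquad x\in\overline\Omega,
\end{equation*}
and seek $\bar u$ as a fixed point of $T$ in the closed convex set
\begin{equation*}
X_M := \bigl\{w\in C(\overline\Omega) : 0\le w(x)\le M\,d(x) \text{ for all } x\in\overline\Omega\bigr\},
\end{equation*}
equipped with the norm $\|w\|:=\sup_{x\in\Omega} w(x)/d(x)$. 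A fixed point $\bar u$ satisfies, in the classical sense by elliptic regularity, $-\Delta\bar u=K\varepsilon+V\bar u^q\ge V\bar u^q$ in $\Omega$ with $\bar u=0$ on $\partial\Omega$, while $\bar u(x)\ge K\varepsilon\,\phi(x)\ge Kc_0\varepsilon\,d(x)$; choosing $K:=1/c_0$ and invoking \eqref{eq214} then gives $\bar u\ge\varepsilon\,d(x)\ge u_0$ as required.

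The heart of the argument is the contraction property of $T$ on $X_M$, which relies on the weighted Green-function estimate
\begin{equation*}
\int_\Omega G(x,y)\,d(y)^{\alpha}\,dy \le C\,d(x), \qquad \forall\, x\in\Omega,\ \forall\,\alpha>-1,
\end{equation*}
which is precisely the content of the forthcoming Lemmas \ref{lemma4} and \ref{lemma5}. Combining it with \eqref{eq215}--\eqref{eq216}, so that $\alpha:=q-\sigma_1>-1$, we obtain for every $w\in X_M$
\begin{equation*}
\int_\Omega G(x,y)\,V(y)\,w(y)^q\,dy \le C\,M^q \int_\Omega G(x,y)\,d(y)^{q-\sigma_1}\,dy \le C_1\,M^q\,d(x),
\end{equation*}
hence $Tw(x)\le (K\varepsilon C_0 + C_1 M^q)\,d(x)$. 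Choosing $M$ small enough that $C_1 M^{q-1}\le 1/2$ and then $\varepsilon$ small enough that $K\varepsilon C_0\le M/2$ gives $T(X_M)\subset X_M$. The elementary bound $|w^q-v^q|\le q(w+v)^{q-1}|w-v|\le q(2M)^{q-1} d(y)^{q-1}\|w-v\|$ on $X_M$, inserted in $T$ and estimated via the same Green-function bound, yields $\|Tw-Tv\|\le q(2M)^{q-1}C_1\,\|w-v\|$, a strict contraction after shrinking $M$ once more. Banach's fixed point theorem produces $\bar u\in X_M$.

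The main obstacle is precisely the weighted Green-function estimate, i.e.\ Lemmas \ref{lemma4}--\ref{lemma5}. Its proof rests on the classical two-sided pointwise bound $G(x,y)\asymp\min\{|x-y|^{2-N},\,d(x)d(y)|x-y|^{-N}\}$ valid in $C^{1,1}$ (hence $C^3$) domains, together with a careful decomposition of $\Omega$ according to the relative sizes of $|x-y|$, $d(x)$ and $d(y)$, in order to capture the boundary rate $\sim d(x)$ rather than a mere boundedness bound; the sharpness of the hypothesis $\sigma_1<q+1$ enters exactly through the necessity of the constraint $\alpha>-1$ there.
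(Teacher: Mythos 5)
Your proposal is correct and follows essentially the same route as the paper: the supersolution is obtained as a fixed point of the Green-operator map $v\mapsto \lambda^q\int_\Omega G(\cdot,y)\,dy+\int_\Omega G(\cdot,y)V(y)v(y)^q\,dy$ on the cone $\{0\le v\le\lambda\,d\}$, the lower bound $\bar u\ge c\,d(x)$ needed to dominate $u_0$ comes from Hopf's lemma, and the weighted Green-function estimates you isolate are exactly Lemmas \ref{lemma4} and \ref{lemma5}. The one place where your bookkeeping needs care is the contraction step: as written, the bound $|w^q-v^q|\le q(2M)^{q-1}d(y)^{q-1}\|w-v\|$ paired with the estimate $\int_\Omega G(x,y)\,d(y)^\alpha\,dy\le C\,d(x)$ (which requires $\alpha>-1$) would force $q-1-\sigma_1>-1$, i.e.\ $\sigma_1<q$, and would miss the range $q\le\sigma_1<q+1$ allowed by \eqref{eq216}. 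The fix is immediate: either carry the extra factor $d(y)$ supplied by your weighted norm $\|w\|=\sup w/d$ (so the exponent becomes $q-\sigma_1>-1$ and Lemma \ref{lemma4} applies), or, as the paper does, run the contraction in the plain $L^\infty$ norm and invoke the weaker boundedness estimate of Lemma \ref{lemma5}, which only needs $q-1-\sigma_1>-2$, i.e.\ precisely $\sigma_1<q+1$.
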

For any $\varepsilon>0$ sufficiently small, set
\begin{equation}\label{eq616}
\Omega_\varepsilon=\{x\in\Omega\,|\,d(x)\geq\varepsilon\}.
\end{equation}

\begin{theorem}\label{teorema4}
Let $V\in L^1_{loc}(\Omega\times[0,\infty))$, $V>0$ a.e., and $u_0\in L^1_{loc}(\Omega)$, $u_0\ge 0$ a.e. Assume that there exists a nonincreasing function $f:(0,\varepsilon_0)\rightarrow[1,\infty)$ such that $\lim_{\varepsilon\rightarrow0^+}f(\varepsilon)=+\infty$ and such that, for some $C>0$,
for every $\varepsilon>0$ small enough
\begin{equation}\label{eq218}
\begin{aligned}
\int_0^{f(\varepsilon)}\int_{\Omega_\frac{\varepsilon}{2}\setminus\Omega_{\varepsilon}}V^{-\frac{1}{q-1}}\,dx dt\,\le\, C\,\varepsilon^{\frac{2q}{q-1}}\,,\\
\int_{\frac{1}{2}f(\varepsilon)}^{f(\varepsilon)}\int_{\Omega_\frac{\varepsilon}{2}}V^{-\frac{1}{q-1}}\,dx dt\,\le\, C\,f(\varepsilon)^\frac{q}{q-1}\,.
\end{aligned}
\end{equation}
If $u$ is a nonnegative weak solution of problem \eqref{problema2}, then $u= 0$ a.e. in $\Omega\times(0,+\infty)$.
\end{theorem}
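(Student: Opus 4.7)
The plan is a two-step test-function argument that exploits the linearity of $-\partial_t - \Delta$: first I obtain an a priori $L^1$-bound on $V u^q$ via Young's inequality, then I bootstrap to $u\equiv 0$ via Hölder's inequality and absolute continuity of the Lebesgue integral. For $0 < \varepsilon < \varepsilon_0$ small, let $\eta_\varepsilon \in C_c^\infty(\Omega)$ satisfy $0 \le \eta_\varepsilon \le 1$, $\eta_\varepsilon \equiv 1$ on $\Omega_\varepsilon$, $\operatorname{supp} \eta_\varepsilon \subset \Omega_{\varepsilon/2}$, $|\nabla \eta_\varepsilon| \le C/\varepsilon$, $|D^2 \eta_\varepsilon| \le C/\varepsilon^2$, and let $\chi_\varepsilon \in C^\infty([0,\infty))$ be nonincreasing with $\chi_\varepsilon \equiv 1$ on $[0, f(\varepsilon)/2]$, $\chi_\varepsilon \equiv 0$ on $[f(\varepsilon),\infty)$, $|\chi_\varepsilon'| \le C/f(\varepsilon)$. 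Set
\[ \varphi_\varepsilon(x,t) := \eta_\varepsilon(x)^{2q/(q-1)}\,\chi_\varepsilon(t)^{q/(q-1)}. \]
The exponents are tuned so that $\varphi_\varepsilon^{-1/(q-1)}|\Delta\varphi_\varepsilon|^{q/(q-1)}$ contains no negative power of $\eta_\varepsilon$ (on the support $(\Omega_{\varepsilon/2}\setminus\Omega_\varepsilon)\times[0,f(\varepsilon)]$ of $\Delta\varphi_\varepsilon$) and $\varphi_\varepsilon^{-1/(q-1)}|\partial_t\varphi_\varepsilon|^{q/(q-1)}$ contains no negative power of $\chi_\varepsilon$ (on the support $\Omega_{\varepsilon/2}\times[f(\varepsilon)/2,f(\varepsilon)]$ of $\partial_t\varphi_\varepsilon$).

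Testing the equation in problem \eqref{problema2} against $\varphi_\varepsilon$ yields
\[ \int_0^\infty\!\!\int_\Omega V u^q \varphi_\varepsilon\,dx\,dt + \int_\Omega u_0\,\varphi_\varepsilon(\cdot,0)\,dx = \int_0^\infty\!\!\int_\Omega u\bigl(-\partial_t\varphi_\varepsilon - \Delta\varphi_\varepsilon\bigr)\,dx\,dt. \]
Bounding $|{-\partial_t\varphi_\varepsilon - \Delta\varphi_\varepsilon}| \le |\partial_t\varphi_\varepsilon| + |\Delta\varphi_\varepsilon|$ and applying Young's inequality with exponents $q$ and $q/(q-1)$ separately to each product, then absorbing one copy of $V u^q\varphi_\varepsilon$ into the LHS, I obtain
\[ \int_0^\infty\!\!\int_\Omega V u^q \varphi_\varepsilon + \int_\Omega u_0 \varphi_\varepsilon(\cdot,0) \le C\Bigl[\varepsilon^{-\frac{2q}{q-1}}\!\!\int_0^{f(\varepsilon)}\!\!\int_{\Omega_{\varepsilon/2}\setminus\Omega_\varepsilon}\!\! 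V^{-\frac{1}{q-1}} + f(\varepsilon)^{-\frac{q}{q-1}}\!\!\int_{f(\varepsilon)/2}^{f(\varepsilon)}\!\!\int_{\Omega_{\varepsilon/2}}\!\! V^{-\frac{1}{q-1}}\Bigr]. \]
Hypothesis \eqref{eq218} is calibrated precisely so that each bracketed term is bounded uniformly in $\varepsilon$. Letting $\varepsilon \to 0^+$ and using $\varphi_\varepsilon \nearrow 1$ a.e., Fatou gives $V u^q \in L^1(\Omega\times(0,\infty))$ and $u_0 \in L^1(\Omega)$.

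For the conclusion, I revisit the same weak equality but apply Hölder (with the same pair of exponents) to the right-hand side:
\[ \Bigl|\int\!\!\int u(-\partial_t\varphi_\varepsilon - \Delta\varphi_\varepsilon)\Bigr| \le \Bigl(\int\!\!\int_{K_\varepsilon} V u^q \varphi_\varepsilon\Bigr)^{1/q}\Bigl(\int\!\!\int V^{-\frac{1}{q-1}}\varphi_\varepsilon^{-\frac{1}{q-1}}|\partial_t\varphi_\varepsilon + \Delta\varphi_\varepsilon|^{\frac{q}{q-1}}\Bigr)^{\frac{q-1}{q}}, \]
where $K_\varepsilon := \operatorname{supp}(\partial_t\varphi_\varepsilon + \Delta\varphi_\varepsilon) \subset \bigl[(\Omega_{\varepsilon/2}\setminus\Omega_\varepsilon)\times[0,f(\varepsilon)]\bigr] \cup \bigl[\Omega_{\varepsilon/2}\times[f(\varepsilon)/2,f(\varepsilon)]\bigr]$. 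The second factor is uniformly bounded by the computation of the first step. The first piece of $K_\varepsilon$ collapses onto $\partial\Omega$ and the second escapes to $t=+\infty$, so $K_\varepsilon$ is disjoint from every fixed compact subset of $\Omega\times[0,\infty)$ once $\varepsilon$ is small enough; since $Vu^q \in L^1$, absolute continuity of the Lebesgue integral sends the first factor to $0$. Therefore $\int\!\!\int V u^q \varphi_\varepsilon + \int u_0\varphi_\varepsilon(\cdot,0) \to 0$, and monotone convergence forces $\int_0^\infty\!\int_\Omega V u^q = 0$. Since $V>0$ a.e., $u = 0$ a.e.\ in $\Omega\times(0,\infty)$.

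The core difficulty is the two-step reuse of a single family of test functions: the first step yields only a priori integrability (by design, since \eqref{eq218} encodes a tight balance rather than a decay condition), so a naive one-shot Young argument does not conclude. The decisive insight, enabled by the linearity of $-\partial_t - \Delta$ (this is where the quasilinear analog breaks), is that the same cutoff can be fed through Hölder to trade $L^1$-integrability of $V u^q$ for smallness of its integral on the vanishing-support set of the cutoff derivatives. A secondary technical point is the construction of $\eta_\varepsilon$ with the required $C^2$ bounds without regularity on $\partial\Omega$, and its admissibility as a test function in the weak formulation of Section \ref{preliminaries}; this is handled by mollifying a smooth approximation of $d(\cdot)$.
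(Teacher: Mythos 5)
Your proposal is correct and follows essentially the same route as the paper: a Young-inequality step with a doubly-truncated test function to establish $Vu^q\in L^1(\Omega\times(0,\infty))$ (the paper's Lemmas \ref{lemma71} and \ref{lemma73}), followed by a H\"older-inequality step with the same cutoff, whose derivative-support escapes every compact subset of $\Omega\times[0,\infty)$, so that dominated convergence kills the right-hand side (the paper's Lemma \ref{lemma72} and the proof of Theorem \ref{teorema4}). The only differences are cosmetic: you tune the powers $\eta_\varepsilon^{2q/(q-1)}\chi_\varepsilon^{q/(q-1)}$ and build the spatial cutoff by mollification, while the paper takes a single power $\alpha>\frac{2q}{q-1}$ of a product cutoff built from the Whitney regularized distance (paying for the comparability constants with a dyadic sum).
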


As a consequence of Theorem \ref{teorema4} we have the following

\begin{corollary}\label{corollario4}
Suppose that $u_0\in L^1_{loc}(\Omega)$ with $u_0\ge 0$ a.e. in $\Omega$. Assume that $V$ satisfies for some $C>0$
\begin{equation}\label{eq37}
V(x,t)\ge Cd(x)^{-q-1}f(d(x))^{q-1}\quad\, \text{for a.e.}\,\,x\in\Omega,\, t\in[0,+\infty),
\end{equation}
where $f:(0,\operatorname{diam}(\Omega)]\rightarrow[1,+\infty)$ is nonincreasing in a right-neighborhood of $0$ and such that $\lim_{\varepsilon\rightarrow0^+}f(\varepsilon)=+\infty$. If $u$ is a nonnegative weak solution of problem \eqref{problema2}, then $u= 0$ a.e. in $\Omega\times(0,+\infty)$.
\end{corollary}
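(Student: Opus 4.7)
The plan is to deduce this from Theorem \ref{teorema4} by exhibiting a suitable choice of the auxiliary function in \eqref{eq218}. The natural candidate is to take this function to be precisely the $f$ already provided in the statement of the corollary: it is by assumption nonincreasing in a right-neighborhood of $0$ and diverges at $0^+$, so the qualitative requirements of Theorem \ref{teorema4} are met. The main task is therefore to verify the two quantitative bounds \eqref{eq218}, which will follow by combining the pointwise estimate
$$V(x,t)^{-\frac{1}{q-1}}\,\le\, C\, d(x)^{\frac{q+1}{q-1}}\, f(d(x))^{-1},$$
obtained by taking $(q-1)$-th roots in \eqref{eq37}, with elementary geometric estimates on $\Omega$.

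For the first inequality in \eqref{eq218}, I will observe that on the annular region $\Omega_{\varepsilon/2}\setminus\Omega_\varepsilon$ one has $\varepsilon/2\le d(x)\le\varepsilon$, so that for $\varepsilon$ small enough (where $f$ is monotone) both $f(d(x))\ge f(\varepsilon)$ and $d(x)^{(q+1)/(q-1)}\le\varepsilon^{(q+1)/(q-1)}$. Using the standard tubular neighborhood bound $|\Omega_{\varepsilon/2}\setminus\Omega_\varepsilon|\le C\varepsilon$ together with the choice of auxiliary function equal to $f(\varepsilon)$, the divergent factor $f(\varepsilon)$ coming from the time integration is precisely cancelled by the $f(\varepsilon)^{-1}$ coming from the pointwise bound, yielding exactly $C\varepsilon^{(q+1)/(q-1)+1}=C\varepsilon^{2q/(q-1)}$. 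This exact cancellation is what makes $d^{-q-1}$ the critical rate of growth announced in the introduction.

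The second inequality in \eqref{eq218} will be easier: since $f\ge 1$ globally and $d(x)\le\operatorname{diam}(\Omega)$, the quantity $V^{-1/(q-1)}$ is bounded above by a constant depending only on $\Omega$ and $q$, so integrating over the short time window $[\tfrac12 f(\varepsilon),f(\varepsilon)]$ and over $\Omega_{\varepsilon/2}\subset\Omega$ produces at most $C|\Omega|f(\varepsilon)$, which for small $\varepsilon$ is dominated by $f(\varepsilon)^{q/(q-1)}$ since $q/(q-1)>1$ and $f(\varepsilon)\to+\infty$. An application of Theorem \ref{teorema4} will then give $u\equiv 0$ a.e.\ in $\Omega\times(0,+\infty)$. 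The only mildly technical point is the tubular neighborhood estimate $|\Omega_{\varepsilon/2}\setminus\Omega_\varepsilon|\le C\varepsilon$, which requires some regularity of $\partial\Omega$ (Lipschitz suffices) that is implicit in the framework of the paper; apart from this, no further PDE or functional-analytic difficulty arises, and the corollary is really a direct specialization of Theorem \ref{teorema4}.
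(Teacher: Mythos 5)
Your proposal is correct and follows essentially the same route as the paper's own proof: taking $(q-1)$-th roots of \eqref{eq37}, using the monotonicity of $f$ to cancel the $f(\varepsilon)$ factor from the time integration against $f(d(x))^{-1}$ on the annulus, invoking $|\Omega_{\varepsilon/2}\setminus\Omega_\varepsilon|\le C\varepsilon$ for the first bound, and using $f\ge1$ together with $f(\varepsilon)\le f(\varepsilon)^{q/(q-1)}$ for the second, before applying Theorem \ref{teorema4}. Your remark that the tubular neighborhood estimate requires some boundary regularity is a fair observation that the paper leaves implicit.
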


\begin{remark}
 We note that an example of function $f$ satisfying the assumptions of Corollary \ref{corollario4} is
 $$f(r)=\bigg[\overbrace{\log\circ\log\circ\ldots\circ\log}^{m\,\textrm{times}}\left(K+\frac{1}{r}\right)\bigg]^\beta, \quad r>0,$$
  for any $\beta>0$, $m\in\mathbb{N}$ and for $K>0$ sufficiently large.
\end{remark}

\section{Preliminaries}\label{preliminaries}

Let us first give the precise definition of weak solution to problem \eqref{problema} or \eqref{problema2}.
\begin{definition}\label{def}
Let $p>1$, $q>\max\{p-1,1\}$, $V\in L^1_{loc}(\Omega\times[0,+\infty))$, $V>0$ a.e. in $\Omega\times(0,+\infty)$ and $u_0\in L^1_{loc}(\Omega)$, $u_0\ge 0$ a.e. in $\Omega$. We say that $u\in W^{1,p}_{loc}(\Omega \times[0,+\infty)) \cap L^q_{loc}(\Omega\times[0,+\infty), Vdx dt)$ is a weak solution of problem \eqref{problema} if $u\ge 0$ a.e. in $\Omega\times(0,+\infty)$ and for every $\varphi \in \operatorname{Lip}(\Omega\times[0,\infty))$, $\varphi \ge 0$ in $\Omega \times[0,+\infty)$ and with compact support in $\Omega\times[0,\infty)$, one has
\begin{equation}\label{eq31}
\begin{aligned}
\int_0^{\infty}\int_{\Omega} Vu^q\,\varphi\,dx dt &\le \int_0^{\infty}\int_{\Omega}|\nabla u|^{p-2}\left \langle \nabla u, \nabla \varphi \right \rangle\, dx dt \\ &- \int_0^{\infty}\int_{\Omega}u\, \partial_t \varphi\, dx dt - \int_{\Omega} u_0\, \varphi(x,0) \, dx.
\end{aligned}
\end{equation}
\end{definition}

We now state some preliminary results that will be used in the proofs of Theorems \ref{teorema1} and \ref{teorema2}. We omit here the proofs, that can be found in \cite{MaMoPu}.

\begin{lemma}\label{lemma1}
Let $s\ge \max \left\{ 1, \frac{q}{q-1}, \frac{pq}{q-p+1}\right\}$ be fixed. Then there exists a constant $C>0$ such that for every $\alpha\in \big(-\min\big\{\frac{1}{2},\frac{p-1}{2}\big\}, 0 \big)$, for every nonnegative weak solution $u$ of problem \eqref{problema} and for every $\varphi\in Lip\left(\Omega \times [0,+\infty)\right)$ with compact support, $0\le \varphi \le 1$ one has
\begin{equation}\label{eq32}
\begin{aligned}
\frac 12\int_0^{\infty}\int_{\Omega} &V\,u^{q+\alpha}\,\varphi^s\,dx \,dt +\frac 34|\alpha| \int_0^{\infty}\int_{\Omega}|\nabla u|^{p}u^{\alpha-1}\,\varphi^s\, dx dt \\ & \le C\left\{ |\alpha|^{-\frac{(p-1)q}{q-p+1}} \int_0^{\infty}\int_{\Omega} |\nabla \varphi|^{\frac{p(q+\alpha)}{q-p+1}} V^{-\frac{p+\alpha-1}{q-p+1}} \, dx dt \right. \\
&\left.\quad\quad\,\,+ \int_{0}^{\infty} \int_{\Omega} |\partial_t \varphi| ^{\frac{q+\alpha}{q-1}} V^{-\frac{\alpha+1}{q-1}} \, dx\,dt\right\}.
\end{aligned}
\end{equation}
\end{lemma}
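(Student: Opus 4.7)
The plan is to insert into the weak formulation \eqref{eq31} the Moser-type test function $\tilde\varphi=(u+\eta)^{\alpha}\varphi^s$, with $\eta>0$ a regularization that compensates for the negative exponent $\alpha$ and the fact that $u$ may vanish, and afterwards send $\eta\to0^+$. Differentiating,
\begin{align*}
\nabla\tilde\varphi&=\alpha(u+\eta)^{\alpha-1}\varphi^s\nabla u+s(u+\eta)^{\alpha}\varphi^{s-1}\nabla\varphi,\\
\partial_t\tilde\varphi&=\alpha(u+\eta)^{\alpha-1}\varphi^s\partial_t u+s(u+\eta)^{\alpha}\varphi^{s-1}\partial_t\varphi,
\end{align*}
so the diagonal gradient contribution $\alpha\int|\nabla u|^p(u+\eta)^{\alpha-1}\varphi^s$ has the correct sign, since $\alpha<0$, to move across the inequality and appear on the left as $|\alpha|\int|\nabla u|^p(u+\eta)^{\alpha-1}\varphi^s$. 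For the time contribution I would introduce the primitive $G_\eta(u)=\int_0^u\tau(\tau+\eta)^{\alpha-1}\,d\tau$, so that $u(u+\eta)^{\alpha-1}\partial_tu=\partial_tG_\eta(u)$, and integrate by parts in $t$; combining with the initial-data term $-\int_\Omega u_0\tilde\varphi(x,0)\,dx$ in \eqref{eq31}, the resulting initial-data contribution equals (formally, for $\eta=0$) $-\tfrac{1}{\alpha+1}\int_\Omega u_0^{\alpha+1}\varphi(x,0)^s\,dx\le0$, since $\alpha+1>1/2$, and so it can be discarded.

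After this bookkeeping one is left with the two off-diagonal terms (written formally for $\eta=0$)
$$s\int|\nabla u|^{p-1}|\nabla\varphi|\,u^{\alpha}\varphi^{s-1}\,dx\,dt\quad\text{and}\quad\frac{s}{\alpha+1}\int u^{\alpha+1}\varphi^{s-1}|\partial_t\varphi|\,dx\,dt.$$
The first I would dispatch by Young's inequality with conjugate exponents $p/(p-1)$ and $p$, picking the small parameter of order $|\alpha|$ so that the resulting $|\nabla u|^pu^{\alpha-1}\varphi^s$ piece absorbs into $\tfrac{3}{4}|\alpha|\int|\nabla u|^pu^{\alpha-1}\varphi^s$ on the left; this produces a residue of size $|\alpha|^{1-p}|\nabla\varphi|^pu^{\alpha+p-1}\varphi^{s-p}$. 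A second Young's inequality with the tailored conjugate pair
$$r=\frac{q+\alpha}{\alpha+p-1},\qquad r'=\frac{q+\alpha}{q-p+1}$$
is then engineered so that the $u$-powers cancel, splitting the residue into a small multiple of $Vu^{q+\alpha}\varphi^s$ (absorbed into the left) plus a term proportional to $|\alpha|^{-(p-1)(q+\alpha)/(q-p+1)}|\nabla\varphi|^{p(q+\alpha)/(q-p+1)}V^{-(\alpha+p-1)/(q-p+1)}\varphi^{s-p(q+\alpha)/(q-p+1)}$. The time off-diagonal term is handled by a single Young step with conjugate exponents $(q+\alpha)/(\alpha+1)$ and $(q+\alpha)/(q-1)$, yielding the claimed contribution $|\partial_t\varphi|^{(q+\alpha)/(q-1)}V^{-(\alpha+1)/(q-1)}$ on the right.

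The assumption $s\ge\max\{1,q/(q-1),pq/(q-p+1)\}$ enters precisely to ensure that the residual powers of $\varphi$ left over from each Young step are nonnegative, so that the bound $0\le\varphi\le1$ lets us drop them. Since $\alpha$ ranges in the bounded interval $(-\min\{1/2,(p-1)/2\},0)$ the quantity $|\alpha|^{-(p-1)\alpha/(q-p+1)}$ stays uniformly bounded, so $|\alpha|^{-(p-1)(q+\alpha)/(q-p+1)}$ may be replaced by the simpler $|\alpha|^{-(p-1)q/(q-p+1)}$ appearing in \eqref{eq32}, at the cost of enlarging $C=C(p,q,s)$. The main obstacle is making the test function choice rigorous, since $(u+\eta)^{\alpha}\varphi^s$ is not automatically Lipschitz with compact support when $u\in W^{1,p}_{loc}\cap L^q_{loc}(\Omega\times[0,\infty),V\,dx\,dt)$ may be unbounded. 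I would first work with the truncation $(\min\{u,k\}+\eta)^{\alpha}\varphi^s$, which is admissible for each $k,\eta>0$; carry out the computation above at this level; pass $k\to\infty$ by monotone/Fatou convergence, exploiting the $L^q_{loc}(V\,dx\,dt)$ integrability to control the leading left-hand term; and finally let $\eta\to0^+$ by the same principles.
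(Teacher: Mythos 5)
Your argument is correct and is essentially the proof the paper relies on: the paper omits the proof and refers to \cite{MaMoPu}, where exactly this Moser-type test function $(u+\eta)^{\alpha}\varphi^{s}$ (suitably truncated to be admissible) is used, the diagonal gradient term is kept on the left thanks to $\alpha<0$, and the two off-diagonal terms are absorbed via the same pairs of Young inequalities with the conjugate exponents you indicate. Your exponent bookkeeping, the role of the hypothesis on $s$, the sign of the initial-datum contribution, and the replacement of $|\alpha|^{-(p-1)(q+\alpha)/(q-p+1)}$ by $|\alpha|^{-(p-1)q/(q-p+1)}$ all check out.
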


\begin{lemma}\label{lemma2}
Let $s\ge \max \left\{ 1, \frac{q+1}{q-1}, \frac{2pq}{q-p+1}\right\}$ be fixed. Then there exists a constant $C>0$ such that for every $\alpha\in \big(-\min\big\{\frac{1}{2},\frac{p-1}{2},\frac{q-1}{2},\frac{q-p+1}{2(p-1)}\big\}, 0 \big)$, for every nonnegative weak solution $u$ of problem \eqref{problema} and for every $\varphi\in Lip\left(S\right)$ with compact support and $0\le \varphi \le 1$ one has
\begin{equation}\label{eq33}
\begin{aligned}
\int_0^{\infty} & \int_{\Omega} V\,u^{q}\,\varphi^s\,dx \,dt \\
&\le C \left[ |\alpha|^{-1} \left(|\alpha|^{-\frac{(p-1)q}{q-p+1}} \int_0^{\infty}\int_{\Omega}V^{-\frac{p+\alpha-1}{q-p+1}}|\nabla \varphi |^{\frac{p(q+\alpha)}{q-p+1}}\, dx dt +  \int_0^{\infty}\int_{\Omega} V^{-\frac{\alpha+1}{q-1}}\,|\partial_t \varphi|^{\frac{q+\alpha}{q-1}}  \, dx dt \right)\right]^{\frac{p-1}{p}} \\&\times \left( \int\int_{S\setminus K} V\,u^{q}\varphi^s\,dx\,dt\right)^{\frac{(1-\alpha)(p-1)}{pq}} \left( \int\int_{S\setminus K} V^{-\frac{(1-\alpha)(p-1)}{q-(1-\alpha)(p-1)}}\,|\nabla\varphi|^{\frac{pq}{q-(1-\alpha)(p-1)}}\,dx dt\right)^{\frac{q-(1-\alpha)(p-1)}{pq}} \\
&+C\,\left(\int\int_{S\setminus K} V\,u^{q+\alpha}\,\varphi^s\,dx dt\right)^{\frac{1}{q+\alpha}}  \left(\int_0^{\infty}\int_{\Omega}V^{-\frac{1}{q+\alpha-1}}\, |\partial_t \varphi|^{\frac{q+\alpha}{q+\alpha-1}}\,dx dt\right)^{\frac{q+\alpha-1}{q+\alpha}},
\end{aligned}
\end{equation}
where $K:=\left\{(x,t)\in S\,: \, \varphi(x,t)=1\right\}$ and $S$ has been defined in \eqref{eq21}.
\end{lemma}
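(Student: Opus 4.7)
The plan is to work directly from the weak inequality \eqref{eq31} with the test function $\varphi^s$. Since $s\ge 1$, $\varphi^s$ is admissible; bounding in absolute value and discarding the nonpositive initial--datum contribution yields
\begin{equation*}
\int_0^{\infty}\!\int_{\Omega} V u^q \varphi^s\,dxdt \le s\!\int_0^{\infty}\!\int_{\Omega} |\nabla u|^{p-1}\varphi^{s-1}|\nabla\varphi|\,dxdt + s\!\int_0^{\infty}\!\int_{\Omega} u\,\varphi^{s-1}|\partial_t\varphi|\,dxdt.
\end{equation*}
Both right--hand integrals are automatically supported in $S\setminus K$, since $\nabla\varphi$ and $\partial_t\varphi$ vanish on $K=\{\varphi=1\}$. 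The idea is then to estimate each of the two summands via a carefully chosen H\"older decomposition so that the two summands on the right--hand side of \eqref{eq33} emerge.

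For the gradient term I would insert the trivial factor $u^{(\alpha-1)(p-1)/p}\,u^{(1-\alpha)(p-1)/p}=1$ and split
\begin{equation*}
|\nabla u|^{p-1}\varphi^{s-1}|\nabla\varphi|=\bigl(|\nabla u|^p u^{\alpha-1}\varphi^s\bigr)^{\frac{p-1}{p}}\bigl(Vu^q\varphi^s\bigr)^{\frac{(1-\alpha)(p-1)}{pq}} R(x,t),
\end{equation*}
where $R(x,t)$ collects the remaining powers of $V$, $\varphi$ and $|\nabla\varphi|$. A triple H\"older inequality with exponents $\tfrac{p}{p-1}$, $\tfrac{pq}{(1-\alpha)(p-1)}$ and $\tfrac{pq}{q-(1-\alpha)(p-1)}$ (which sum to $1$) then reproduces exactly the three factors of the first summand of \eqref{eq33}. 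The threshold $s\ge 2pq/(q-p+1)$ is precisely what guarantees that the residual exponent of $\varphi$ in the $|\nabla\varphi|$--integral is nonnegative uniformly in $\alpha$, so that factor may be majorized by its $\varphi$--free analogue. The last step is to bound $\int |\nabla u|^p u^{\alpha-1}\varphi^s$ by Lemma \ref{lemma1}, which after taking the $(p-1)/p$--power extracts the prefactor $|\alpha|^{-1}$ and produces the bracketed expression in the first summand.

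For the time--derivative term I would write
\begin{equation*}
u\,\varphi^{s-1}|\partial_t\varphi|=\bigl(Vu^{q+\alpha}\varphi^s\bigr)^{\frac{1}{q+\alpha}} V^{-\frac{1}{q+\alpha}}\varphi^{s-1-\frac{s}{q+\alpha}}|\partial_t\varphi|,
\end{equation*}
and apply H\"older's inequality with the conjugate exponents $q+\alpha$ and $(q+\alpha)/(q+\alpha-1)$. The restriction on $\alpha$ forces $q+\alpha-1>0$, so both exponents are positive; the assumption $s\ge(q+1)/(q-1)$ keeps the residual power of $\varphi$ nonnegative. The outcome matches exactly the second summand of \eqref{eq33}.

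The main obstacle, and really the only delicate part, is the exponent bookkeeping: one has to verify that every H\"older exponent lies in $(1,\infty)$ and that each residual $\varphi$--exponent is nonnegative under the stated restrictions on $\alpha$, $p$ and $q$. Both items reduce to the positivity of $q+\alpha-1$ and $q-(1-\alpha)(p-1)$, and the admissible range $|\alpha|<\min\bigl\{\tfrac12,\tfrac{p-1}{2},\tfrac{q-1}{2},\tfrac{q-p+1}{2(p-1)}\bigr\}$ together with the lower bounds on $s$ in the hypothesis is tailor--made so that each of these positivity conditions holds throughout the argument.
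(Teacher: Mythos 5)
Your argument is correct and is essentially the proof that the paper delegates to \cite{MaMoPu}: test the weak formulation with $\varphi^s$, drop the nonpositive initial--datum term, split the gradient term by a three--factor H\"older inequality whose first factor is controlled by Lemma \ref{lemma1} (producing the $|\alpha|^{-1}$ prefactor), and split the time term by a two--factor H\"older inequality with exponents $q+\alpha$ and $\tfrac{q+\alpha}{q+\alpha-1}$. Your exponent bookkeeping checks out: under $|\alpha|<\min\{\tfrac{q-1}{2},\tfrac{q-p+1}{2(p-1)}\}$ one has $\tfrac{pq}{q-(1-\alpha)(p-1)}<\tfrac{2pq}{q-p+1}\le s$ and $\tfrac{q+\alpha}{q+\alpha-1}<\tfrac{q+1}{q-1}\le s$, so every residual power of $\varphi$ is nonnegative and may be bounded by $1$.
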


\begin{corollary}\label{corollario2}
Under the hypotheses of Lemma \ref{lemma2} one has
\begin{equation}\label{eq34}
\begin{aligned}
\int_0^{\infty} & \int_{\Omega} V\,u^{q}\,\varphi^s\,dx \,dt \\
&\le C \left[ |\alpha|^{-1} \left(|\alpha|^{-\frac{(p-1)q}{q-p+1}} \int_0^{\infty}\int_{\Omega}V^{-\frac{p+\alpha-1}{q-p+1}}|\nabla \varphi |^{\frac{p(q+\alpha)}{q-p+1}}\, dx dt +  \int_0^{\infty}\int_{\Omega} V^{-\frac{\alpha+1}{q-1}}\,|\partial_t \varphi|^{\frac{q+\alpha}{q-1}}  \, dx dt \right)\right]^{\frac{p-1}{p}} \\&\times \left( \int\int_{S\setminus K} V\,u^{q}\varphi^s\,dx\,dt\right)^{\frac{(1-\alpha)(p-1)}{pq}} \left( \int\int_{S\setminus K} V^{-\frac{(1-\alpha)(p-1)}{q-(1-\alpha)(p-1)}}\,|\nabla\varphi|^{\frac{pq}{q-(1-\alpha)(p-1)}}\,dx dt\right)^{\frac{q-(1-\alpha)(p-1)}{pq}} \\
&+C\,\left(|\alpha|^{-\frac{(p-1)q}{q-p+1}}\int_0^{\infty}\int_{\Omega} V^{-\frac{p+\alpha-1}{q-p+1}}\,|\nabla\varphi|^{\frac{p(q+\alpha)}{q-p+1}}\,dx dt + \int_0^{\infty}\int_{\Omega}V^{-\frac{\alpha+1}{q-1}}\, |\partial_t \varphi|^{\frac{q+\alpha}{q-1}}\,dx dt\right)^{\frac{1}{q+\alpha}} \\
&\times \left(\int_0^{\infty}\int_{\Omega}V^{-\frac{1}{q+\alpha-1}}\, |\partial_t \varphi|^{\frac{q+\alpha}{q+\alpha-1}}\,dx dt\right)^{\frac{q+\alpha-1}{q+\alpha}}.
\end{aligned}
\end{equation}
\end{corollary}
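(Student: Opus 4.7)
The plan is to derive Corollary \ref{corollario2} as a direct consequence of Lemma \ref{lemma2} by estimating the nonlinear factor $\bigl(\int\int_{S\setminus K} V u^{q+\alpha}\varphi^s\,dxdt\bigr)^{1/(q+\alpha)}$ that appears in the second additive term on the right-hand side of Lemma \ref{lemma2} via the a priori estimate supplied by Lemma \ref{lemma1}. The first additive term of Lemma \ref{lemma2} is kept unchanged and reappears verbatim in Corollary \ref{corollario2}, so all the work is concentrated in the second term.

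First I would verify that the hypotheses of Lemma \ref{lemma1} are compatible with those of Lemma \ref{lemma2}: the constraint $s\ge \max\{1,(q+1)/(q-1),2pq/(q-p+1)\}$ assumed in Lemma \ref{lemma2} is stronger than the exponent requirement of Lemma \ref{lemma1}, and the admissible range of $\alpha$ in Lemma \ref{lemma2} is contained in the one of Lemma \ref{lemma1}. Consequently, for any $\varphi$ and $\alpha$ satisfying the hypotheses of Lemma \ref{lemma2}, the inequality \eqref{eq32} of Lemma \ref{lemma1} is available.

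Next I would apply Lemma \ref{lemma1}, discard the nonnegative term $\tfrac{3}{4}|\alpha|\int\int |\nabla u|^{p} u^{\alpha-1}\varphi^s\,dxdt$, and double the remaining inequality to obtain
\begin{equation*}
\int_0^\infty\!\!\int_\Omega V\,u^{q+\alpha}\varphi^s\,dxdt \,\le\, C\!\left\{|\alpha|^{-\frac{(p-1)q}{q-p+1}}\!\int_0^\infty\!\!\int_\Omega V^{-\frac{p+\alpha-1}{q-p+1}}|\nabla\varphi|^{\frac{p(q+\alpha)}{q-p+1}}dxdt + \int_0^\infty\!\!\int_\Omega V^{-\frac{\alpha+1}{q-1}}|\partial_t\varphi|^{\frac{q+\alpha}{q-1}}dxdt\right\}.
\end{equation*}
Since the integrand $V u^{q+\alpha}\varphi^s$ is nonnegative, the same bound holds with the domain of integration restricted to $S\setminus K$. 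Raising both sides to the power $1/(q+\alpha)>0$ (which preserves the inequality) yields an estimate for $\bigl(\int\int_{S\setminus K} Vu^{q+\alpha}\varphi^s\,dxdt\bigr)^{1/(q+\alpha)}$ in terms of the very quantities that appear in the right-hand side of \eqref{eq34}.

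Finally, I would plug this bound into the inequality \eqref{eq33} from Lemma \ref{lemma2}, leaving the first additive term untouched, and absorb the resulting multiplicative constant into $C$. This produces exactly \eqref{eq34} of Corollary \ref{corollario2}. There is no real obstacle here: the argument is a mechanical chaining of the two preceding results, and the only minor bookkeeping point is to check that the exponents and weights on $V$, $|\nabla \varphi|$, and $|\partial_t\varphi|$ produced by Lemma \ref{lemma1} match, letter for letter, those displayed inside the new additive term of \eqref{eq34}, which they do by construction.
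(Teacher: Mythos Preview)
Your proposal is correct and is exactly the natural derivation: the paper itself omits the proof of Corollary~\ref{corollario2} (together with Lemmas~\ref{lemma1}--\ref{lemma3}), referring to \cite{MaMoPu}, and the argument there is precisely the combination you describe, namely bounding the factor $\bigl(\int\!\!\int_{S\setminus K}V u^{q+\alpha}\varphi^s\bigr)^{1/(q+\alpha)}$ in \eqref{eq33} by means of Lemma~\ref{lemma1}. Your checks on the compatibility of the ranges of $s$ and $\alpha$ are also accurate.
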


\begin{lemma}\label{lemma3}
Let $s\ge \max \left\{ 1, \frac{q+1}{q-1}, \frac{2pq}{q-p+1}\right\}$ be fixed. Then there exists a constant $C>0$ such that for every $\alpha\in \big(-\min\big\{\frac{1}{2},\frac{p-1}{2},\frac{q-1}{2},\frac{q-p+1}{2(p-1)}\big\}, 0 \big)$, for every nonnegative weak solution $u$ of problem \eqref{problema} and for every $\varphi\in Lip\left(S\right)$ with compact support and $0\le \varphi \le 1$ one has
\begin{equation}\label{eq35}
\begin{aligned}
\int_0^{\infty} & \int_{\Omega} V\,u^{q}\,\varphi^s\,dx \,dt \\
&\le C \left[ |\alpha|^{-1} \left(|\alpha|^{-\frac{(p-1)q}{q-p+1}} \int_0^{\infty}\int_{\Omega}V^{-\frac{p+\alpha-1}{q-p+1}}|\nabla \varphi |^{\frac{p(q+\alpha)}{q-p+1}}\, dx dt +  \int_0^{\infty}\int_{\Omega} V^{-\frac{\alpha+1}{q-1}}\,|\partial_t \varphi|^{\frac{q+\alpha}{q-1}}  \, dx dt \right)\right]^{\frac{p-1}{p}} \\&\times \left( \int\int_{S\setminus K} V\,u^{q}\varphi^s \,dx dt\right)^{\frac{(1-\alpha)(p-1)}{qp}} \left( \int\int_{S\setminus K} V^{-\frac{(1-\alpha)(p-1)}{q-(1-\alpha)(p-1)}}\,|\nabla\varphi|^{\frac{pq}{q-(1-\alpha)(p-1)}}\,dx dt\right)^{\frac{q-(1-\alpha)(p-1)}{pq}} \\
&+C\,\left(\int\int_{S\setminus K} V\,u^{q}\,\varphi^s \,dx\,dt\right)^{\frac{1}{q}}  \left(\int_0^{\infty}\int_{\Omega}V^{-\frac{1}{q-1}}\, |\partial_t \varphi|^{\frac{q}{q-1}}\,dx dt\right)^{\frac{q-1}{q}},
\end{aligned}
\end{equation}
where $K:=\left\{(x,t)\in S\,: \, \varphi(x,t)=1\right\}$ and $S$ has been defined in \eqref{eq21}.
\end{lemma}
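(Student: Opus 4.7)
I would first observe that the statement of Lemma \ref{lemma3} differs from that of Lemma \ref{lemma2} only in the second additive term on the right-hand side: in Lemma \ref{lemma2} that term features the pair $(\int_{S\setminus K} V u^{q+\alpha}\varphi^s)^{1/(q+\alpha)}$ and $V^{-1/(q+\alpha-1)}|\partial_t\varphi|^{(q+\alpha)/(q+\alpha-1)}$, while in Lemma \ref{lemma3} those become $(\int_{S\setminus K} V u^q\varphi^s)^{1/q}$ and $V^{-1/(q-1)}|\partial_t\varphi|^{q/(q-1)}$. My plan is therefore to replay the proof of Lemma \ref{lemma2} verbatim, substituting a different H\"older inequality at the single step where the time-derivative contribution is bounded.

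Concretely, I would start from the weak formulation using $\varphi^s$ as test function and drop the initial-datum term (since $u_0\ge 0$ and $\varphi\ge 0$), obtaining
\[
\int V u^q\varphi^s\,dxdt\le s\int|\nabla u|^{p-1}\varphi^{s-1}|\nabla\varphi|\,dxdt+s\int u\,\varphi^{s-1}|\partial_t\varphi|\,dxdt.
\]
For the gradient integral I would proceed exactly as in Lemma \ref{lemma2}: factor
$|\nabla u|^{p-1}\varphi^{s-1}|\nabla\varphi|=(|\nabla u|^p u^{\alpha-1}\varphi^s)^{(p-1)/p}\cdot(u^{(1-\alpha)(p-1)/p}\varphi^{s/p-1}|\nabla\varphi|)$
and apply H\"older with exponents $p/(p-1)$ and $p$; then apply a second H\"older with exponents $q/[(1-\alpha)(p-1)]$ and $q/[q-(1-\alpha)(p-1)]$ to the residual $\int u^{(1-\alpha)(p-1)}\varphi^{s-p}|\nabla\varphi|^p$. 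This produces the factor $(\int_{S\setminus K}V u^q\varphi^s)^{(1-\alpha)(p-1)/(pq)}$ together with the $|\nabla\varphi|^{pq/[q-(1-\alpha)(p-1)]}$-integral appearing in Lemma \ref{lemma3}, while the surviving $(p-1)/p$-power of $\int|\nabla u|^p u^{\alpha-1}\varphi^s$ is replaced via Lemma \ref{lemma1}, giving the first line of the right-hand side of Lemma \ref{lemma3}. Along the way, the assumptions $s\ge 2pq/(q-p+1)$ and the restriction on $\alpha$ are exactly what is needed to keep the resulting exponents of $\varphi$ non-negative, so all intermediate powers of $\varphi$ can be bounded above by $1$; the vanishing of $|\nabla\varphi|$ on the interior of $K$ allows me to restrict the relevant integrals to $S\setminus K$.

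The only genuinely new step is the estimate of the time integral. I would split the integrand as $(V^{1/q}u\,\varphi^{s/q})\cdot(V^{-1/q}\varphi^{s-1-s/q}|\partial_t\varphi|)$ and apply H\"older with conjugate exponents $q$ and $q/(q-1)$ (in place of the exponents $q+\alpha$ and $(q+\alpha)/(q+\alpha-1)$ used for Lemma \ref{lemma2}). The first factor, raised to $q$, yields $Vu^q\varphi^s$ and, after restricting to the set where $|\partial_t\varphi|\ne 0$ (which is contained in $S\setminus K$), produces $(\int_{S\setminus K}V u^q\varphi^s)^{1/q}$. The second factor, raised to $q/(q-1)$, yields $V^{-1/(q-1)}\varphi^{s-q/(q-1)}|\partial_t\varphi|^{q/(q-1)}$; since $s\ge(q+1)/(q-1)>q/(q-1)$ the exponent of $\varphi$ is non-negative and we may bound $\varphi^{s-q/(q-1)}\le 1$, which produces exactly the second additive term in the statement of Lemma \ref{lemma3}.

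The main obstacle is purely bookkeeping: one must track the non-negativity of every $\varphi$-exponent emerging from the H\"older steps (the only nontrivial check here is $s\ge q/(q-1)$, which is why the admissible range of $s$ is chosen as stated), and verify that the powers of $|\alpha|^{-1}$ produced via Lemma \ref{lemma1} assemble into the factor $[|\alpha|^{-1}(\cdots)]^{(p-1)/p}$ on the right-hand side of Lemma \ref{lemma3}. No new analytical input beyond Lemma \ref{lemma1} and the two H\"older inequalities used in Lemma \ref{lemma2} is required.
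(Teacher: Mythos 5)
Your argument is correct and is essentially the standard proof of this estimate: the paper itself omits the proofs of Lemmas \ref{lemma1}--\ref{lemma3}, deferring to \cite{MaMoPu}, where precisely this scheme is carried out (test with $\varphi^s$, two H\"older steps on the gradient term combined with Lemma \ref{lemma1}, and a single H\"older step with exponents $q$ and $\frac{q}{q-1}$ on the time term, in place of the exponents $q+\alpha$ and $\frac{q+\alpha}{q+\alpha-1}$ used for Lemma \ref{lemma2}), and your exponent bookkeeping, including the checks $s\ge \frac{q}{q-1}$ and $s\ge \frac{pq}{q-(1-\alpha)(p-1)}$, is accurate. The only point worth making explicit is the usual regularization needed to justify the factor $u^{\alpha-1}$ (working with $u+\varepsilon$, or restricting to $\{u>0\}$ where $\nabla u$ is supported), which is inherited from the proof of Lemma \ref{lemma1}.
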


\section{Proof of Theorem \ref{teorema1} and of Corollary \ref{corollario1}}\label{proofnonlin}
\begin{proof}[Proof of Theorem \ref{teorema1}]
For any $\delta>0$ sufficiently small, let $\alpha:=\frac{1}{\log \delta}$. Observe that $\alpha <0$ and $\alpha\to 0^-$ for $\delta\to 0$. We define for any $(x,t)\in S$
\begin{equation}\label{eq41}
\varphi(x,t):=\begin{cases}  \quad\quad\quad1 & \quad \text{in}\,\, E_{\delta} \\  \left[ \dfrac{d(x)^{-\theta_2}+t^{\theta_1}}{\delta^{-\theta_2}}\right]^{C_1\alpha} & \quad \text{in}\,\, \left(E_{\delta}\right)^C
\end{cases}.
\end{equation}
where
\begin{equation}\label{eq41b}
C_1>\frac{2(C_0+\theta_2+1)}{\theta_2q}
\end{equation}
with $C_0\geq0$, $\theta_1, \theta_2\ge1$ as in (HP$1$) and $E_{\delta}$ has been defined in \eqref{eq21}. Moreover, for any $n\in \mathbb{N}$ we define
\begin{equation}\label{eq42}
\eta_n(x,t):=\begin{cases}
\quad\quad\quad 1 & \quad \text{in}\,\, E_{\frac{\delta}{n}} \\
\frac{2^{\theta_2}}{2^{\theta_2}-1}-\frac{1}{2^{\theta_2}-1}\left(\dfrac{\delta}{n}\right)^{\theta_2}\left[ d(x)^{-\theta_2}+t^{\theta_1}\right] & \quad \text{in}\,\, E_{\frac{\delta}{2n}}\setminus E_{\frac{\delta}{n}} \\
\quad\quad\quad 0 & \quad \text{in}\,\, E_{\frac{\delta}{2n}}^C
\end{cases}.
\end{equation}
Let
\begin{equation}\label{eq43}
\varphi_n(x,t):= \eta_n(x,t)\,\varphi(x,t).
\end{equation}
Observe that $\varphi_n\in \operatorname{Lip}(S)$ and $0\le \varphi \le 1$. Moreover, for any $a\ge 1$ we have
\begin{equation}\label{eq44}
|\partial_t \varphi_n|^a=|\eta_n\partial_t\varphi\,+\,\varphi\partial_t\eta_n|^a \le 2^{a-1}\left(|\partial_t\varphi|^a + \varphi^a|\partial_t\eta_n|^a\right).
\end{equation}
\begin{equation}\label{eq45}
|\nabla \varphi_n|^a=|\eta_n\nabla\varphi\,+\,\varphi\nabla\eta_n|^a \le 2^{a-1}\left(|\nabla\varphi|^a + \varphi^a|\nabla\eta_n|^a\right).
\end{equation}
Let $s\ge \max\left\{1,\,\frac{q}{q-1},\,\frac{pq}{q-p+1}\right\}$, we apply Lemma \ref{lemma1} with $\varphi$ replaced by the family of functions $\varphi_n$. Then, for some positive constant $C$, for every $n\in \mathbb{N}$ and $|\alpha|>0$ small enough we have
$$
\begin{aligned}
\int_0^{\infty}\int_{\Omega} &V\,u^{q+\alpha}\,\varphi_n^s\,dx \,dt  \\
&\le C\left\{|\alpha|^{-\frac{(p-1)q}{q-p+1}} \int_0^{\infty}\int_{\Omega}|\nabla \varphi_n|^{\frac{p(q+\alpha)}{q-p+1}}V^{-\frac{p+\alpha-1}{q-p+1}}\, dx dt +  \int_0^{\infty}\int_{\Omega}|\partial_t \varphi_n|^{\frac{q+\alpha}{q-1}}V^{-\frac{\alpha+1}{q-1}}\, dx dt\right\}\\
& \le C |\alpha|^{-\frac{(p-1)q}{q-p+1}} \left[\int_0^{\infty}\int_{\Omega} |\nabla \varphi|^{\frac{p(q+\alpha)}{q-p+1}} V^{-\frac{p+\alpha-1}{q-p+1}} \, dx dt  + \int_{0}^{\infty} \int_{\Omega}  \varphi ^{\frac{p(q+\alpha)}{q-p+1}} |\nabla \eta_n|^{\frac{p(q+\alpha)}{q-p+1}}V^{-\frac{p+\alpha+1}{q-p+1}} \, dx\,dt\right] \\
&+ C\left[\int_0^{\infty}\int_{\Omega}|\partial_t \varphi|^{\frac{q+\alpha}{q-1}}V^{-\frac{\alpha+1}{q-1}}\, dx dt + \int_0^{\infty}\int_{\Omega} \varphi^{\frac{q+\alpha}{q-1}} |\partial_t \eta_n| ^{\frac{q+\alpha}{q-1}}V^{-\frac{\alpha+1}{q-1}}\, dx dt\right].
\end{aligned}
$$
Let us define
\begin{equation}\label{eq45b}
\tilde{E}_{\delta,n}:=E_\frac{\delta}{2n} \setminus E_{\frac{\delta}{n}},
\end{equation}
and
\begin{align}
&\label{eq46}I_1:=\int_0^{\infty}\int_{\Omega} |\nabla \varphi|^{\frac{p(q+\alpha)}{q-p+1}} V^{-\frac{p+\alpha-1}{q-p+1}} \, dx dt,
\\
&\label{eq47}
I_2:= \int\int_{\tilde{E}_{\delta,n}}  \varphi ^{\frac{p(q+\alpha)}{q-p+1}} |\nabla \eta_n|^{\frac{p(q+\alpha)}{q-p+1}}V^{-\frac{p+\alpha+1}{q-p+1}} \, dx\,dt,
\\
&\label{eq48}
I_3:=\int_0^{\infty}\int_{\Omega}|\partial_t \varphi|^{\frac{q+\alpha}{q-1}}V^{-\frac{\alpha+1}{q-1}}\, dx dt,
\\
&\label{eq49}
I_4:= \int\int_{\tilde{E}_{\delta,n}} \varphi^{\frac{q+\alpha}{q-1}} |\partial_t \eta_n| ^{\frac{q+\alpha}{q-1}}V^{-\frac{\alpha+1}{q-1}}\, dx dt.
\end{align}
Then the latter inequality can be read, for a positive constant $C$ and for every $n\in \mathbb{N}$, as
\begin{equation}\label{eq49b}
\int_0^{\infty}\int_{\Omega} V\,u^{q+\alpha}\,\varphi_n^s\,dx dt \le C |\alpha|^{-\frac{(p-1)q}{q-p+1}} \left[ I_1+I_2\right]+C\left[I_3+I_4\right].
\end{equation}

In view of \eqref{eq41} and \eqref{eq42}, for $|\alpha|>0$ small enough noand for every $n\in \mathbb{N}$, we have
\begin{equation}\label{eq410}
\begin{aligned}
I_2&\le \int\int_{\tilde{E}_{\delta,n}}  C \,n ^{C_1\alpha\theta_2\frac{p(q+\alpha)}{q-p+1}} \left(\frac{\delta}{n}\right)^{\theta_2\frac{p(q+\alpha)}{q-p+1}} \left[d(x)^{-\theta_2-1}|\nabla d(x)|\right]^{\frac{p(q+\alpha)}{q-p+1}}V^{-\frac{p+\alpha+1}{q-p+1}} \, dx\,dt\\
&\le C\,n ^{\theta_2\frac{p(q+\alpha)}{q-p+1}(C_1\alpha-1)}\delta^{\theta_2\frac{p(q+\alpha)}{q-p+1}}  \int\int_{\tilde{E}_{\delta,n}} d(x)^{-(\theta_2+1)\frac{p(q+\alpha)}{q-p+1}} V^{-\frac{p+\alpha+1}{q-p+1}} \, dx\,dt.
\end{aligned}
\end{equation}
Due to assumption (HP$1)-(ii)$ with $\varepsilon=-\frac{\alpha}{q-p+1}>0$, \eqref{eq410} reduces to
\begin{equation}\label{eq411}
I_2\le C\,n ^{\theta_2\frac{p(q+\alpha)}{q-p+1}(C_1\alpha-1)}\delta^{\theta_2\frac{p(q+\alpha)}{q-p+1}}\left(\frac{\delta}{n}\right)^{-\frac{pq\theta_2}{q-p+1}-C_0\varepsilon}\,\left|\log\left(\frac{\delta}{n}\right)\right|^{s_4} ,
\end{equation}
with $s_4$ as in (HP$1$). Now observe that, due \eqref{eq41b}, we have
$$
\frac{|\alpha|}{q-p+1}\left(-\theta_2\,p+C_1\,p\,\theta_2(q+\alpha)-C_0\right) \ge \frac{|\alpha|}{q-p+1}.
$$
Moreover, there exist $\bar C>0$ such that
$$
\delta^{\frac{\alpha}{q-p+1}\left[\theta_2 p+C_0\right]}=\mathrm{e}^{\frac{\alpha}{q-p+1}\left[\theta_2 p+C_0\right]\log(\delta)}=\mathrm{e}^{\frac{\theta_2 p+C_0}{q-p+1}}\le \bar C.
$$
Then from \eqref{eq411} we deduce, for some $C>0$ and $|\alpha|>0$ small enough
\begin{equation}\label{eq412}
I_2\le C\,n^{-\frac{|\alpha|}{q-p+1}}\,\left|\log\left(\frac{\delta}{n}\right)\right|^{s_4} .
\end{equation}

Similarly, in view of \eqref{eq41} and \eqref{eq42}, for $|\alpha|>0$ small enough and for every $n\in \mathbb{N}$ we have
\begin{equation}\label{eq413}
\begin{aligned}
I_4&\le C\int\int_{\tilde{E}_{\delta,n}} n^{\theta_2C_1\alpha\left(\frac{q+\alpha}{q-1}\right)}\left(\frac{\delta}{n}\right)^{\theta_2\left(\frac{q+\alpha}{q-1}\right)} t^{(\theta_1-1)\frac{q+\alpha}{q-1}}\,V^{-\frac{\alpha+1}{q-1}}\,dx dt \\
&\le C n^{\theta_2\left(\frac{q+\alpha}{q-1}\right)(C_1\alpha-1)}\delta^{\theta_2\left(\frac{q+\alpha}{q-1}\right)} \int\int_{\tilde{E}_{\delta,n}} t^{(\theta_1-1)\left(\frac{q+\alpha}{q-1}\right)}\,V^{-\frac{\alpha+1}{q-1}}\,dx dt .
\end{aligned}
\end{equation}
Due to assumption HP$1(i)$ with $\varepsilon=-\frac{\alpha}{q-1}>0$, \eqref{eq413} reduces to
\begin{equation}\label{eq414}
\begin{aligned}
I_4&\le C \,n^{\theta_2\left(\frac{q+\alpha}{q-1}\right)(C_1\alpha-1)}\delta^{\theta_2\left(\frac{q+\alpha}{q-1}\right)} \left(\frac{\delta}{n}\right)^{-\frac{q}{q-1}\theta_2-C_0\varepsilon}\left|\log\left(\frac{\delta}{n}\right)\right|^{s_2} \\
&\le C\,n^{\frac{1}{q-1}\left[C_1\alpha\theta_2(q+\alpha)-\alpha\theta_2+C_0|\alpha|\right]}\,\delta^{\frac{1}{q-1}\left[\alpha\theta_2+C_0\alpha\right]}\left|\log\left(\frac{\delta}{n}\right)\right|^{s_2},
\end{aligned}
\end{equation}
with $s_2$ as in (HP$1$). We now observe that, due to \eqref{eq41b}, we can write
\begin{equation}\label{eq415}
n^{-\frac{|\alpha|}{q-1}\left[C_1\theta_2(q+\alpha)-\theta_2-C_0\right]}\le n^{-\frac{|\alpha|}{q-1}}.
\end{equation}
Moreover, observe that there exist $\bar{C}>0$ such that
\begin{equation}\label{eq416}
\delta^{\frac{\alpha}{q-1}\left(\theta_2+C_0\right)}=\mathrm{e}^{\frac{\alpha}{q-1}\left(\theta_2+C_0\right)\log(\delta)}=\mathrm{e}^{\frac{\theta_2+C_0}{q-1}}\le \bar{C}.
\end{equation}
By plugging \eqref{eq415} and \eqref{eq416} into \eqref{eq414} we get for $\delta>0$ small enough
\begin{equation}\label{eq417}
I_4\le C\,n^{-\frac{|\alpha|}{q-1}}\left|\log\left(\frac{\delta}{n}\right)\right|^{s_2} .
\end{equation}
Let us now consider integral $I_1$ defined in \eqref{eq46}. By using the definition of $\varphi$ in \eqref{eq41} we can write
\begin{equation}\label{eq418}
\begin{aligned}
I_1&\le \int\int_{E_{\delta}^C} \left[C_1|\alpha|\theta_2\left(\frac{d(x)^{-\theta_2}+t^{\theta_1}}{\delta^{-\theta_2}}\right)^{C_1\alpha-1}\frac{d(x)^{-\theta_2-1}}{\delta^{-\theta_2}}\right]^{\frac{p(q+\alpha)}{q-p+1}} V^{-\frac{p+\alpha-1}{q-p+1}}\,dx dt\\
&\le C\int\int_{E_{\delta}^C} |\alpha|^{\frac{p(q+\alpha)}{q-p+1}}\left[d(x)^{-\theta_2}+t^{\theta_1}\right]^{\frac{(C_1\alpha-1)p(q+\alpha)}{q-p+1}}d(x)^{-\frac{(\theta_2+1)p(q+\alpha)}{q-p+1}}\delta^{\frac{\theta_2C_1\alpha p(q+\alpha)}{q-p+1}} V^{-\frac{p+\alpha-1}{q-p+1}}\,dx dt.
\end{aligned}
\end{equation}
Similarly to \eqref{eq416}, we can say that there exist $\bar{C}>0$ such that
$$
\delta^{\frac{\theta_2C_1\alpha p(q+\alpha)}{q-p+1}}\le \bar{C},
$$
hence \eqref{eq418}, for some constant $C>0$, reduces to
\begin{equation}\label{eq419}
I_1\le C|\alpha|^{\frac{p(q+\alpha)}{q-p+1}} \int\int_{E_{\delta}^C} V^{-\frac{p+\alpha-1}{q-p+1}} d(x)^{-\frac{(\theta_2+1)p(q+\alpha)}{q-p+1}}\left[\left(d(x)^{-\theta_2}+t^{\theta_1}\right)^{-\frac{1}{\theta_2}}\right]^{-\frac{\theta_2(C_1\alpha-1)p(q+\alpha)}{q-p+1}} \,dx dt.
\end{equation}

\begin{claim}{} If $f:(0,+\infty)\to[0,+\infty)$ is a non decreasing function and if (HP$1)-(ii)$ holds then, for any $0<\varepsilon<\varepsilon_0$ and for any $\delta>0$ small enough, we can write
\begin{equation}\label{eq420}
\begin{aligned}
\int\int_{E_{\delta}^C} f\left(\left[\left(d(x)^{-\theta_2}+t^{\theta_1}\right)^{-\frac{1}{\theta_2}}\right]\right) &d(x)^{-(\theta_2+1)p\left(\frac{q}{q-p+1}-\varepsilon\right)}V^{-\frac{p-1}{q-p+1}+\varepsilon}\,dx dt \\
& \le C \int_0^{2\delta} f(z) z^{-\frac{pq}{q-p+1}\theta_2-C_0\varepsilon-1}|\log z|^{s_4}\,dz,
\end{aligned}
\end{equation}
for some constant $C>0$.
\end{claim}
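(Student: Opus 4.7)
My plan is to perform a dyadic decomposition of $E_\delta^C$, apply (HP$1$)-(ii) on each dyadic shell, and then recognize the resulting series as a Riemann-type lower sum for the integral on the right-hand side of \eqref{eq420}.

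Since $E_\delta\subsetneq E_{\delta/2}\subsetneq E_{\delta/4}\subsetneq\cdots$ and $\bigcup_{k\geq 0}E_{\delta/2^{k+1}}=S$, I would first write
$$
E_\delta^{C}\,=\,\bigsqcup_{k=0}^{\infty}\bigl(E_{\delta/2^{k+1}}\setminus E_{\delta/2^{k}}\bigr),
$$
and observe that on the $k$-th shell one has $\frac{\delta}{2^{k+1}}\leq(d(x)^{-\theta_2}+t^{\theta_1})^{-1/\theta_2}\leq\frac{\delta}{2^{k}}$, so by the monotonicity of $f$ we get $f\bigl((d(x)^{-\theta_2}+t^{\theta_1})^{-1/\theta_2}\bigr)\leq f(\delta/2^{k})$ on that shell.

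Next, for $\delta<\delta_0$ I would apply (HP$1$)-(ii) with $\delta$ replaced by $\delta_k:=\delta/2^{k}$ on each shell (admissible for every $k\geq 0$). Pulling the pointwise bound $f(\delta_k)$ out of the shell integral and summing over $k$ yields
$$
\int\int_{E_\delta^C}f\bigl((d(x)^{-\theta_2}+t^{\theta_1})^{-1/\theta_2}\bigr)d(x)^{-(\theta_2+1)p(\frac{q}{q-p+1}-\varepsilon)}V^{-\frac{p-1}{q-p+1}+\varepsilon}dxdt\,\leq\,C\sum_{k=0}^\infty f(\delta_k)\,\delta_k^{-\bar s_3-C_0\varepsilon}|\log\delta_k|^{s_4},
$$
where $\bar s_3=\frac{pq\theta_2}{q-p+1}$.

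Finally, to reach the integral form in \eqref{eq420} I would compare this series to $\int_0^{2\delta}f(z)z^{-\bar s_3-C_0\varepsilon-1}|\log z|^{s_4}\,dz$. With the convention $\delta_{-1}:=2\delta$, the intervals $[\delta_k,\delta_{k-1}]$ partition $(0,2\delta]$ and each has length $\delta_k$; on such an interval, the monotonicity of $f$, the negativity of the exponent of $z$, and the identity $|\log\delta_{k-1}|=|\log\delta_k|-\log 2$ together give
$$
f(\delta_k)\,\delta_k^{-\bar s_3-C_0\varepsilon}|\log\delta_k|^{s_4}\,\leq\,C\int_{\delta_k}^{\delta_{k-1}}f(z)\,z^{-\bar s_3-C_0\varepsilon-1}|\log z|^{s_4}\,dz,
$$
and summing in $k$ yields the claim. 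The only mildly delicate step is the comparison of $|\log\delta_{k-1}|$ with $|\log\delta_k|$ uniformly in $k$, which requires $\delta$ small enough that $|\log\delta_k|$ remains uniformly bounded away from zero; taking for instance $\delta\leq 1/e^2$ ensures $|\log\delta_k|\geq 2>\log 2$ for every $k\geq 0$, so that $|\log\delta_{k-1}|\geq c\,|\log\delta_k|$ with a universal constant $c>0$.
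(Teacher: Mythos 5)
Your proposal is correct and follows essentially the same route as the paper: a dyadic decomposition of $E_\delta^C$ into shells $E_{\delta/2^{k+1}}\setminus E_{\delta/2^k}$, the pointwise bound $f\le f(\delta/2^k)$ on each shell via monotonicity, an application of (HP$1$)-(ii) at scale $\delta/2^k$, and finally the comparison of the resulting series with the integral $\int_0^{2\delta}f(z)z^{-\bar s_3-C_0\varepsilon-1}|\log z|^{s_4}\,dz$. Your treatment of the last step (uniform comparability of $|\log\delta_{k-1}|$ and $|\log\delta_k|$ for $\delta$ small) is just a more explicit version of what the paper leaves implicit.
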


To show the claim, we first observe that
$$
f\left(\big(d(x)^{-\theta_2}+t^\theta_1\big)^{-\frac{1}{\theta_2}}\right)\le f\left(\frac{\delta}{2^n}\right)\quad \quad\text{in}\,\,\, E_{\frac{\delta}{2^{n+1}}}\setminus E_{\frac{\delta}{2^n}}.
$$
Hence, due to HP$1(ii)$, we can write
$$
\begin{aligned}
\int\int_{(E_{\delta})^C} &f\left(\left[d(x)^{-\theta_2}+t^{\theta_1}\right]^{-\frac{1}{\theta_2}}\right) d(x)^{-(\theta_2+1)p\left(\frac{q}{q-p+1}-\varepsilon\right)}V^{-\frac{p-1}{q-p+1}+\varepsilon}\,dx dt \\
&=\sum_{n=0}^{+\infty} \int\int_{E_{\frac{\delta}{2^{n+1}}}\setminus E_{\frac{\delta}{2^n}}} f\left(\left[d(x)^{-\theta_2}+t^{\theta_1}\right]^{-\frac{1}{\theta_2}}\right)d(x)^{-(\theta_2+1)p\left(\frac{q}{q-p+1}-\varepsilon\right)}V^{-\frac{p-1}{q-p+1}+\varepsilon}\,dx dt \\
&\le \sum_{n=0}^{+\infty} f\left(\frac{\delta}{2^n}\right)\int\int_{E_{\frac{\delta}{2^{n+1}}}\setminus E_{\frac{\delta}{2^n}}}d(x)^{-(\theta_2+1)p\left(\frac{q}{q-p+1}-\varepsilon\right)}V^{-\frac{p-1}{q-p+1}+\varepsilon}\,dx dt \\
&\le C \sum_{n=0}^{+\infty} f\left(\frac{\delta}{2^{n}}\right)\left(\frac{\delta}{2^{n}}\right)^{-\frac{pq}{q-p+1}\theta_2-C_0\varepsilon}\left|\log\left(\frac{\delta}{2^{n}}\right)\right|^{s_4} \\
&\le C\sum_{n=0}^{+\infty} \int_{\frac{\delta}{2^{n}}}^{\frac{\delta}{2^{(n-1)}}}f(z) z^{-\frac{pq}{q-p+1}\theta_2-C_0\varepsilon-1}|\log z|^{s_4}\,dz\\
&=C\int_{0}^{2\delta}f(z) z^{-\frac{pq}{q-p+1}\theta_2-C_0\varepsilon-1}|\log z|^{s_4}\,dz.
\end{aligned}
$$

We now apply \eqref{eq420} with $\varepsilon=\frac{|\alpha|}{q-p+1}>0$ to inequality \eqref{eq419}. We get
\begin{equation}\label{eq421}
I_1\le C|\alpha|^{\frac{p(q+\alpha)}{q-p+1}}\int_0^{2\delta}z^{-\theta_2\frac{(C_1\alpha-1)p(q+\alpha)}{q-p+1}-\frac{pq}{q-p+1}\theta_2+\frac{C_0\alpha}{q-p+1}-1}|\log z|^{s_4}\,dz.
\end{equation}
We define
\begin{equation}\label{eq422}
b:=\frac{1}{q-p+1}\left(-\theta_2C_1\alpha p(q+\alpha)+\theta_2p\alpha+C_0\alpha\right),
\end{equation}
and due to \eqref{eq41b}, we observe that
$$
b \ge \frac{|\alpha|}{q-p+1} > 0.
$$
By plugging \eqref{eq422} into inequality \eqref{eq421} we can write
\begin{equation}\label{eq423}
I_1\le C\,|\alpha|^{\frac{p(q+\alpha)}{q-p+1}}\int_0^{2\delta}z^{b-1} |\log z|^{s_4}\,dz.
\end{equation}
Let us now perform a change of variable, we define
$$
y:=b\log z,
$$
hence from \eqref{eq423} we deduce
\begin{equation}\label{eq424}
\begin{aligned}
I_1&\le C\,|\alpha|^{\frac{p(q+\alpha)}{q-p+1}}b^{-s_4-1}\int_{-\infty}^0 e^y |y|^{s_4}\,dy \\
&\le C\,|\alpha|^{\frac{p(q+\alpha)}{q-p+1}}\left(\frac{|\alpha|}{q-p+1}\right)^{-s_4-1}\\
&\le C\,|\alpha|^{\frac{pq}{q-p+1}-s_4-1}.
\end{aligned}
\end{equation}
for $|\alpha|>0$ small enough, with $s_4$ as in (HP$1)-(ii)$.

Finally, let us consider $I_3$ defined in \eqref{eq48}. Due to the definition of $\varphi$ in \eqref{eq41} we get
\begin{equation}\label{eq425}
\begin{aligned}
I_3&\le \int\int_{E_{\delta}^C} \left[C_1|\alpha|\theta_1\left(\frac{d(x)^{-\theta_2}+t^{\theta_1}}{\delta^{-\theta_2}}\right)^{C_1\alpha-1}\frac{t^{\theta_1-1}}{\delta^{-\theta_2}}\right]^{\frac{q+\alpha}{q-1}} V^{-\frac{\alpha+1}{q-1}}\,dx dt\\
&\le C\int\int_{E_{\delta}^C} |\alpha|^{\frac{q+\alpha}{q-1}}\left[d(x)^{-\theta_2}+t^{\theta_1}\right]^{\frac{(C_1\alpha-1)(q+\alpha)}{q-1}}t^{\frac{(\theta_1-1)(q+\alpha)}{q-1}}\delta^{\frac{\theta_2C_1\alpha (q+\alpha)}{q-1}} V^{-\frac{\alpha+1}{q-1}}\,dx dt.
\end{aligned}
\end{equation}
Arguing as in \eqref{eq416}, we can say that there exist $\bar{C}>0$ such that
$$
\delta^{\frac{\theta_2C_1\alpha (q+\alpha)}{q-1}}\le \bar{C}\,.
$$
Hence \eqref{eq425}, for some constant $C>0$, reduces to
\begin{equation}\label{eq426}
I_3\le C|\alpha|^{\frac{q+\alpha}{q-1}} \int\int_{E_{\delta}^C} V^{-\frac{\alpha+1}{q-1}} t^{\frac{(\theta_1-1)(q+\alpha)}{q-1}}\left[\left(d(x)^{-\theta_2}+t^{\theta_1}\right)^{-\frac{1}{\theta_2}}\right]^{-\theta_2\frac{(C_1\alpha-1)(q+\alpha)}{q-1}} \,dx dt.
\end{equation}
We have the following
\begin{claim}{} If $f:(0,+\infty)\to[0,+\infty)$ is a non decreasing function and if (HP$1)-(i)$ holds then, for any $0<\varepsilon<\varepsilon_0$ and for any $\delta>0$ small enough, we can write
\begin{equation}\label{eq427}
\begin{aligned}
\int\int_{E_{\delta}^C} f\left(\left[\left(d(x)^{-\theta_2}+t^{\theta_1}\right)^{-\frac{1}{\theta_2}}\right]\right) &t^{(\theta_1-1)\left(\frac{q}{q-1}-\varepsilon\right)}V^{-\frac{1}{q-1}+\varepsilon}\,dx dt \\
& \le C \int_0^{2\delta} f(z) z^{-\frac{q}{q-1}\theta_2-C_0\varepsilon-1}|\log z|^{s_2}\,dz,
\end{aligned}
\end{equation}
for some constant $C>0$.
\end{claim}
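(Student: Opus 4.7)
My plan is to prove Claim 2 by mirroring, step for step, the argument just used for Claim 1, with the roles of (HP$1$)-(ii) and (HP$1$)-(i) interchanged, the spatial exponent on $d(x)^{-(\theta_2+1)p(\cdots)}$ replaced by the temporal exponent on $t^{(\theta_1-1)(\cdots)}$, and the exponent $s_4$ on $|\log z|$ replaced by $s_2$. Structurally, the two claims are completely parallel, and the proof relies only on the dyadic decomposition of $(E_\delta)^C$ together with the monotonicity of $f$.

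The first step is to decompose the complement of $E_\delta$ as the disjoint union
\[
(E_\delta)^C = \bigcup_{n=0}^{\infty} \bigl( E_{\delta/2^{n+1}} \setminus E_{\delta/2^n} \bigr),
\]
and to exploit the fact that on $E_{\delta/2^{n+1}}\setminus E_{\delta/2^n}$ the quantity $\bigl(d(x)^{-\theta_2}+t^{\theta_1}\bigr)^{-1/\theta_2}$ is bounded above by $\delta/2^n$; since $f$ is nondecreasing, one has
\[
f\Bigl(\bigl(d(x)^{-\theta_2}+t^{\theta_1}\bigr)^{-1/\theta_2}\Bigr)\le f\!\left(\frac{\delta}{2^n}\right)
\quad \text{on } E_{\delta/2^{n+1}}\setminus E_{\delta/2^n}.
\]
Pulling $f(\delta/2^n)$ out of the integral over each annulus reduces the problem to estimating
\[
\int\!\!\!\int_{E_{\delta/2^{n+1}}\setminus E_{\delta/2^n}}
t^{(\theta_1-1)\left(\frac{q}{q-1}-\varepsilon\right)}V^{-\frac{1}{q-1}+\varepsilon}\,dxdt,
\]
which is exactly the object controlled by assumption (HP$1$)-(i), applied with $\delta$ replaced by $\delta/2^n$ (which is still in $(0,\delta_0)$ for $\delta$ small enough and $n\ge 0$). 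This yields the pointwise bound
\[
\int\!\!\!\int_{E_{\delta/2^{n+1}}\setminus E_{\delta/2^n}} \!\!\!\cdots\, dxdt
\le C\Bigl(\frac{\delta}{2^n}\Bigr)^{-\frac{q}{q-1}\theta_2-C_0\varepsilon}\Bigl|\log\bigl(\tfrac{\delta}{2^n}\bigr)\Bigr|^{s_2}.
\]

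Next, I would sum over $n\ge 0$ and recognize the resulting series as a Riemann-type sum for an integral on $(0,2\delta)$. Concretely, on each interval $[\delta/2^n,\delta/2^{n-1}]$ the integrand $f(z)z^{-\frac{q}{q-1}\theta_2-C_0\varepsilon-1}|\log z|^{s_2}$ is, up to a universal multiplicative constant coming from the length ratio $(\delta/2^n)\cdot\log 2$, bounded below by the value of its discrete counterpart at $z=\delta/2^n$ (the monotonicity of $f$ only helps here). Summing these lower bounds over $n$ and comparing with the sum estimate above produces
\[
\sum_{n=0}^{\infty} f\!\left(\tfrac{\delta}{2^n}\right)\Bigl(\tfrac{\delta}{2^n}\Bigr)^{-\frac{q}{q-1}\theta_2-C_0\varepsilon}\Bigl|\log \tfrac{\delta}{2^n}\Bigr|^{s_2}
\le C\int_0^{2\delta}\!\! f(z)\,z^{-\frac{q}{q-1}\theta_2-C_0\varepsilon-1}|\log z|^{s_2}\,dz,
\]
which is exactly the desired inequality \eqref{eq427}. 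Chaining the estimates gives the claim.

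I do not anticipate a real obstacle: all steps are direct adaptations of the computation performed for Claim 1. The only minor technical point to verify is that the Riemann-sum comparison is uniform in $\delta$ and $\varepsilon\in(0,\varepsilon_0)$, which follows because both the prefactor $|\log(\delta/2^n)|^{s_2}$ and the power $z^{-\frac{q}{q-1}\theta_2-C_0\varepsilon-1}$ vary slowly enough across the dyadic interval $[\delta/2^n,\delta/2^{n-1}]$ (the ratio of their values at the endpoints is bounded by an absolute constant depending only on $q,\theta_2,C_0,s_2,\varepsilon_0$), so that the constant $C$ in the final estimate may indeed be taken independent of $\delta$ and of $\varepsilon$.
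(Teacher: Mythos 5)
Your proposal is correct and follows exactly the argument the paper intends: the paper proves this claim by remarking that it is obtained verbatim from the proof of the first claim (inequality \eqref{eq420}) with (HP$1$)-(i) in place of (HP$1$)-(ii), i.e.\ the same dyadic decomposition of $E_\delta^C$, the bound $f(\cdot)\le f(\delta/2^n)$ on each annulus, the hypothesis applied with $\delta/2^n$, and the sum-to-integral comparison over $[\delta/2^n,\delta/2^{n-1}]$. Your additional remark on the uniformity of the Riemann-sum constants in $\varepsilon\in(0,\varepsilon_0)$ is a correct and welcome clarification of a point the paper leaves implicit.
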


Inequality \eqref{eq427} can be proven similarly to \eqref{eq420} where one uses (HP$1)-(i)$ instead of (HP$1)-(ii)$.
We now apply \eqref{eq427} with $\varepsilon=\frac{|\alpha|}{q-1}>0$ to inequality \eqref{eq426}. We get
\begin{equation}\label{eq428}
I_3\le C|\alpha|^{\frac{q+\alpha}{q-1}}\int_0^{2\delta}z^{-\theta_2(C_1\alpha-1)\frac{q+\alpha}{q-1}-\frac{q}{q-1}\theta_2+\frac{C_0\alpha}{q-1}-1}|\log z|^{s_2}\,dz.
\end{equation}
We define
\begin{equation}\label{eq429}
\beta:=\frac{1}{q-1}\left(-\theta_2C_1\alpha (q+\alpha)+\theta_2\alpha+C_0\alpha\right),
\end{equation}
and due to \eqref{eq41b}, we have
$$
\beta \ge \frac{|\alpha|}{q-1} > 0.
$$
By plugging \eqref{eq429} into inequality \eqref{eq428} and using the change of variables $y=\beta\log z$, we get
\begin{equation}\label{eq430}
\begin{aligned}
I_3&\le C|\alpha|^{\frac{q+\alpha}{q-1}}\int_{-\infty}^0 e^y \left|\frac{y}{\beta}\right|^{s_2} \frac{1}{\beta}\,dy\\
&\le C\,|\alpha|^{\frac{q+\alpha}{q-1}}\,\beta^{-s_2-1}\\
&\le C\,|\alpha|^{\frac{1}{q-1}-s_2}.
\end{aligned}
\end{equation}
with $s_2$ as in (HP$1)-(i)$.

For any $n\in\mathbb{N}$ and $\delta>0$ small enough, due to inequalities \eqref{eq412}, \eqref{eq417}, \eqref{eq424} and \eqref{eq430}, inequality \eqref{eq49b} reduces to
\begin{equation}\label{eq431}
\begin{aligned}
\int_{0}^{\infty}\int_{\Omega} V\,u^{q+\alpha}\,\varphi_n^s\,dx dt &\le C |\alpha|^{-\frac{(p-1)q}{q-p+1}} \left[|\alpha|^{\frac{pq}{q-p+1}-s_4-1}+n^{-\frac{|\alpha|}{q-p+1}}\left|\log\left(\frac{\delta}{n}\right)\right|^{s_4}\right] \\
&+ C\left[ |\alpha|^{\frac{1}{q-1}-s_2}+n^{-\frac{|\alpha|}{q-1}}\left|\log\left(\frac{\delta}{n}\right)\right|^{s_2}\right],
\end{aligned}
\end{equation}
where $C>0$ does not depend on $\delta$ and $n$. By taking the limit in \eqref{eq431} as $n\to \infty$ for fixed small enough $\delta>0$, we get
\begin{equation}\label{eq432}
\begin{aligned}
0\le \int\int_{E_{\delta}} V\,u^{q+\alpha}\,dx dt&\le \int_{0}^{\infty}\int_{\Omega} V\,u^{q+\alpha}\,\varphi_n^s\,dx dt\\
&\le C\left[ |\alpha|^{\frac{p-1}{q-p+1}-s_4} + |\alpha|^{\frac{1}{q-1}-s_2}\right].
\end{aligned}
\end{equation}
Observe that, due to the definitions of $s_2$ in (HP$1)-(i)$ and $s_4$ in (HP$2)-(ii)$
$$
\frac{1}{q-1}-s_2>0\,,\quad\frac{p-1}{q-p+1}-s_4>0\,.
$$
Hence we can take the limit in \eqref{eq432} as $\delta\to 0$, and thus $\alpha \to 0^-$, obtaining by Fatou's Lemma
$$
\int_{0}^{\infty}\int_{\Omega} V\,u^{q}\,dx dt=0,
$$
which concludes the proof.
\end{proof}

As a consequence of Theorem \ref{teorema1} we prove Corollary \ref{corollario1}.
\begin{proof}[Proof of Corollary \ref{corollario1}]
We show that under the assumptions of Corollary \ref{corollario1}, hypothesis (HP$1$) is satisfied. Let us define
$$
\hat{E}_{\delta}:=E_{\frac{\delta}{2}}\setminus E_{\delta}
$$
and observe that
$$
\hat{E}_{\delta}\subset \left\{d(x)\ge \frac{\delta}{2}\right\}\times\bigg[0,\,\left(\frac{\delta}{2}\right)^{-\frac{\theta_2}{\theta_1}}\bigg]=: \Omega_{\frac{\delta}{2}}\times\bigg[0,\,\left(\frac{\delta}{2}\right)^{-\frac{\theta_2}{\theta_1}}\bigg],
$$
where $d(x)$ has been defined in \eqref{eq20}. Observe that for $\delta>0$ small enough
\begin{equation}\label{eq61}
\begin{aligned}
\int\int_{\hat{E}_{\delta}}& t^{(\theta_1-1)\left(\frac{q}{q-1}-\varepsilon\right)}\,V^{-\frac{1}{q-1}+\varepsilon}\,dx dt \\
&\le \int\int_{\hat{E}_{\delta}} t^{(\theta_1-1)\left(\frac{q}{q-1}-\varepsilon\right)} \left[g(t)h(x)\right]^{-\frac{1}{q-1}+\varepsilon}\,dx dt \\
& \le C\int_{\Omega_\frac{\delta}{2}}h(x)^{-\frac{1}{q-1}+\varepsilon}\,dx\,\int_0^{\left(\frac{\delta}{2}\right)^{-\frac{\theta_2}{\theta_1}}} t^{(\theta_1-1)\left(\frac{q}{q-1}-\varepsilon\right)} g(t)^{-\frac{1}{q-1}+\varepsilon}dt \\
&\le C\int_{\Omega_\frac{\delta}{2}}\left[d(x)^{-\sigma_1}\left(\log(1+d(x)^{-1})\right)^{-\delta_1}\right]^{-\frac{1}{q-1}+\varepsilon}dx\, \\
&\times \int_0^{\left(\frac{\delta}{2}\right)^{-\frac{\theta_2}{\theta_1}}} g(t)^{-\frac{1}{q-1}} (1+t)^{\alpha \varepsilon} t^{(\theta_1-1)\left(\frac{q}{q-1}-\varepsilon\right)}dt \\
& \le C\int_{\Omega_\frac{\delta}{2}} d(x)^{\frac{\sigma_1}{q-1}-\varepsilon \sigma_1}\left(\log(1+d(x)^{-1})\right)^{\frac{\delta_1}{q-1}-\varepsilon\delta_1}dx \\
&\times\left[\delta^{-\frac{\theta_2}{\theta_1}\left[(\theta_1-1)\left(\frac{q}{q-1}-\varepsilon\right)+\alpha\varepsilon\right]}\int_0^{\left(\frac{\delta}{2}\right)^{-\frac{\theta_2}{\theta_1}}} g(t)^{-\frac{1}{q-1}}dt\right] \\
& \le C\left|\log(\delta)\right|^{\frac{\delta_1}{q-1}-\varepsilon\delta_1} \left[\delta^{-\frac{\theta_2}{\theta_1}\left[(\theta_1-1)\left(\frac{q}{q-1}-\varepsilon\right)+\alpha\varepsilon\right]}\right]\delta^{-\frac{\theta_2}{\theta_1}\sigma_2}\left|\log(\delta)\right|^{\delta_2}\\
&\le C \delta^{-\frac{\theta_2}{\theta_1}\left[(\theta_1-1)\left(\frac{q}{q-1}-\varepsilon\right)+\alpha\varepsilon+\sigma_2\right]}\left|\log(\delta)\right|^{\frac{\delta_1}{q-1}-\varepsilon\delta_1+\delta_2},
\end{aligned}
\end{equation}
for $\theta_1, \theta_2\ge1$. For $C_0>0$ large and every $\varepsilon>0$ small enough, condition \eqref{eq23} of (HP$1$) is satisfied because
\begin{equation}\label{eq62}
\frac{\theta_2}{\theta_1}\left[\frac{q}{q-1}-\sigma_2\right]\,\ge\,0\quad \text{and}\quad \delta_2+\frac{\delta_1}{q-1} < \bar{s_2}\,.
\end{equation}
On the other hand, for $\varepsilon,\delta>0$ sufficiently small
\begin{equation}\label{eq63}
\begin{aligned}
\int\int_{\hat{E}_{\delta}}& d(x)^{-(\theta_2+1)p\left(\frac{q}{q-p+1}-\varepsilon\right)}\,V^{-\frac{p-1}{q-p+1}+\varepsilon}\,dx dt \\
&\le \int\int_{\hat{E}_{\delta}}d(x)^{-(\theta_2+1)p\left(\frac{q}{q-p+1}-\varepsilon\right)}\left[g(t)h(x)\right]^{-\frac{p-1}{q-p+1}+\varepsilon}\,dx dt \\
&\le \int_{\Omega_\frac{\delta}{2}}d(x)^{-(\theta_2+1)p\left(\frac{q}{q-p+1}-\varepsilon\right)}h(x)^{-\frac{p-1}{q-p+1}+\varepsilon}dx \int_0^{\left(\frac{\delta}{2}\right)^{-\frac{\theta_2}{\theta_1}}} g(t)^{-\frac{p-1}{q-p+1}+\varepsilon}dt\\
&\le C\int_{\Omega_\frac{\delta}{2}}d(x)^{-(\theta_2+1)p\left(\frac{q}{q-p+1}-\varepsilon\right)}\left[d(x)^{\sigma_1}\left(\log(1+d(x)^{-1})\right)^{\delta_1}\right]^{\frac{p-1}{q-p+1}-\varepsilon}dx \\
& \times\left[ \delta^{-\frac{\theta_2}{\theta_1}\alpha\varepsilon}\int_0^{\left(\frac{\delta}{2}\right)^{-\frac{\theta_2}{\theta_1}}} g(t)^{-\frac{p-1}{q-p+1}}dt\right]\\
& \le C \int_{\Omega_\frac{\delta}{2}}d(x)^{-(\theta_2+1)p\left(\frac{q}{q-p+1}-\varepsilon\right)+\sigma_1\frac{p-1}{q-p+1}-\varepsilon\sigma_1}\left(\log(1+d(x)^{-1})\right)^{\delta_1\frac{p-1}{q-p+1}-\varepsilon\delta_1}dx\\
&\times \left[\delta^{-\frac{\theta_2}{\theta_1}\alpha\varepsilon} \delta^{-\frac{\theta_2}{\theta_1}\sigma_4} \right] \\
&\le C  \delta^{-\frac{\theta_2}{\theta_1}(\alpha\varepsilon+\sigma_4)} |\log(\delta)|^{\delta_1\left(\frac{p-1}{q-p+1}-\varepsilon\right)} \int_{\Omega_\frac{\delta}{2}}d(x)^{-(\theta_2+1)p\left(\frac{q}{q-p+1}-\varepsilon\right)+\sigma_1\frac{p-1}{q-p+1}-\varepsilon\sigma_1}dx
\end{aligned}
\end{equation}
We define
$$
\beta:=-(\theta_2+1)p\left(\frac{q}{q-p+1}-\varepsilon\right)+\sigma_1\frac{p-1}{q-p+1}-\varepsilon\sigma_1
$$
and we observe that $\beta<-1$ for $\theta_2$ sufficiently large. Therefore, due to the boundedness of $\Omega_{\delta}$, inequality \eqref{eq63} reduces to
\begin{equation}\label{eq64}
\int\int_{\overline{E}_{\delta}} d(x)^{-(\theta_2+1)p\left(\frac{q}{q-p+1}-\varepsilon\right)}\,V^{-\frac{p-1}{q-p+1}+\varepsilon}\,dx dt
\le  C  \delta^{-\frac{\theta_2}{\theta_1}(\alpha\varepsilon+\sigma_4)+\beta+1} |\log(\delta)|^{\delta_1\left(\frac{p-1}{q-p+1}-\varepsilon\right)}
\end{equation}
For $\varepsilon,\delta>0$ small enough and for $\theta_2/\theta_1>0$ small enough, condition \eqref{eq24} is satisfied for some large $C_0>0$ because the hypotheses of the Corollary \ref{corollario1} guarantee that
$$
\sigma_1-\frac{\theta_2}{\theta_1}\sigma_4\frac{q-p+1}{p-1}\ge q+1\qquad \text{and}\qquad \delta_1\frac{p-1}{q-p+1}<\bar s_4.
$$
Thus (HP$1$) holds and we can apply Theorem \ref{teorema1} to obtain the result.
\end{proof}

\section{Proof of Theorem \ref{teorema2}}\label{proofnonlin2}

\begin{proof}[Proof of Theorem \ref{teorema2}]
Let us recall the family of functions $\varphi_n$ defined in \eqref{eq43}. We claim that $u^q\in L^1(\Omega\times(0,+\infty),Vd\mu dt)$. To prove this, we start by showing that for some constants $A>0$, $B>0$, $s\ge 1$, for every $\delta>0$ small enough and every $n\in \mathbb{N}$ we have
\begin{equation}\label{eq51}
\int_0^{\infty} \int_{\Omega}\varphi_n^su^qV\,dx dt \le A\left(\int_0^{\infty} \int_{\Omega}\varphi_n^su^qV\,dx dt\right)^{\frac{p-1}{pq}}\,+\,B.
\end{equation}
In order to prove \eqref{eq51} we apply Corollary \ref{corollario2} with $\varphi$ replaced by the family of functions $\varphi_n$. Let
\begin{equation}\label{eq51b}
C_1>\max\left\{ \frac{2(1+C_0+\theta_2)}{\theta_2q},\,\frac{2(\theta_2(q-1)+C_0+1)}{\theta_2(q-1)q},\, \frac{2C_0+1}{\theta_2(q-p+1)},\frac{2C_0+1}{\theta_2}\right\}\,,
\end{equation}
with $C_0>0$ and $\theta_2\ge 1$ as in (HP$2$). Then for any fixed $s\ge \max \left\{ 1, \frac{q+1}{q-1}, \frac{2pq}{q-p+1}\right\}$, $\delta>0$ sufficiently small, $\alpha=\frac{1}{\log\delta}<0$ and for every $n\in\mathbb{N}$, we have
\begin{equation}\label{eq52}
\begin{aligned}
\int_0^{\infty} & \int_{\Omega} V\,u^{q}\,\varphi^s\,dx \,dt \\
&\le C \left[ |\alpha|^{-1} \left(|\alpha|^{-\frac{(p-1)q}{q-p+1}} \int_0^{\infty}\int_{\Omega}V^{-\frac{p+\alpha-1}{q-p+1}}|\nabla \varphi_n |^{\frac{p(q+\alpha)}{q-p+1}}\, dx dt \right.\right. \\
& \left.\left.+  \int_0^{\infty}\int_{\Omega} V^{-\frac{\alpha+1}{q-1}}\,|\partial_t \varphi_n|^{\frac{q+\alpha}{q-1}}  \, dx dt \right)\right]^{\frac{p-1}{p}} \times \left( \int\int_{E_{\delta}^C} V\,u^{q}\varphi_n^s\,dx\,dt\right)^{\frac{(1-\alpha)(p-1)}{pq}}\\
& \times\left( \int\int_{E_{\delta}^C} V^{-\frac{(1-\alpha)(p-1)}{q-(1-\alpha)(p-1)}}\,|\nabla\varphi_n|^{\frac{pq}{q-(1-\alpha)(p-1)}}\,dx dt\right)^{\frac{q-(1-\alpha)(p-1)}{pq}} \\
&+C\,\left[|\alpha|^{-\frac{(p-1)q}{q-p+1}}\int_0^{\infty}\int_{\Omega} V^{-\frac{p+\alpha-1}{q-p+1}}\,|\nabla\varphi_n|^{\frac{p(q+\alpha)}{q-p+1}}\,dx dt \right.\\
&\left.+ \int_0^{\infty}\int_{\Omega}V^{-\frac{\alpha+1}{q-1}}\, |\partial_t \varphi_n|^{\frac{q+\alpha}{q-1}}\,dx dt\right]^{\frac{1}{q+\alpha}} \\
&\times \left(\int_0^{\infty}\int_{\Omega}V^{-\frac{1}{q+\alpha-1}}\, |\partial_t \varphi_n|^{\frac{q+\alpha}{q+\alpha-1}}\,dx dt\right)^{\frac{q+\alpha-1}{q+\alpha}}.
\end{aligned}
\end{equation}
where $E_{\delta}$ has been defined in \eqref{eq21}. We also define
\begin{align}
&J_1:=\int_0^{\infty}\int_{\Omega}V^{-\frac{p+\alpha-1}{q-p+1}}|\nabla \varphi_n |^{\frac{p(q+\alpha)}{q-p+1}}\, dx dt ; \label{eq53}\\
&J_2:=\int_0^{\infty}\int_{\Omega} V^{-\frac{\alpha+1}{q-1}}\,|\partial_t \varphi_n|^{\frac{q+\alpha}{q-1}}  \, dx dt;
\label{eq54}\\
&J_3:= \int\int_{E_{\delta}^C} V^{-\frac{(1-\alpha)(p-1)}{q-(1-\alpha)(p-1)}}\,|\nabla\varphi_n|^{\frac{pq}{q-(1-\alpha)(p-1)}}\,dx dt;
\label{eq55}\\
&J_4:=\int\int_{E_{\delta}^C}V^{-\frac{1}{q+\alpha-1}}\, |\partial_t \varphi_n|^{\frac{q+\alpha}{q+\alpha-1}}\,dx dt.
\label{eq56}
\end{align}
By using \eqref{eq53}, \eqref{eq54}, \eqref{eq55} and \eqref{eq56}, inequality \eqref{eq52} reads
\begin{equation}\label{eq57}
\begin{aligned}
\int_0^{\infty} & \int_{\Omega} V\,u^{q}\,\varphi^s\,dx \,dt \\
&\le C \left[ |\alpha|^{-1-\frac{(p-1)q}{q-p+1}} J_1\right]^{\frac{p-1}{p}} \left( \iint_{E_{\delta}^C} V\,u^{q}\varphi_n^s\,dx\,dt\right)^{\frac{(1-\alpha)(p-1)}{pq}} J_3^{\frac{q-(1-\alpha)(p-1)}{pq}} \\
&+C \left[ |\alpha|^{-1} J_2\right]^{\frac{p-1}{p}} \left( \iint_{E_{\delta}^C} V\,u^{q}\varphi_n^s\,dx\,dt\right)^{\frac{(1-\alpha)(p-1)}{pq}} J_3^{\frac{q-(1-\alpha)(p-1)}{pq}} \\
&+C\,\left[|\alpha|^{-\frac{(p-1)q}{q-p+1}}J_1+J_2\right]^{\frac{1}{q+\alpha}}J_4^{\frac{q+\alpha-1}{q+\alpha}}\\
&\le C \left[ |\alpha|^{-\frac{(p-1)q}{q-p+1}} J_1\right]^{\frac{p-1}{p}} \left( \iint_{E_{\delta}^C} V\,u^{q} \varphi_n^s\,dx\,dt\right)^{\frac{(1-\alpha)(p-1)}{pq}} \\ &\times\left[|\alpha|^{-\frac{(p-1)q}{q-(1-\alpha)(p-1)}} J_3\right]^{\frac{q-(1-\alpha)(p-1)}{pq}} \\
&+C J_2^{\frac{p-1}{p}} \left( \iint_{E_{\delta}^C} V\,u^{q}\varphi_n^s\,dx\,dt\right)^{\frac{(1-\alpha)(p-1)}{pq}}\left[ |\alpha|^{-\frac{(p-1)q}{q-(1-\alpha)(p-1)}} J_3\right]^{\frac{q-(1-\alpha)(p-1)}{pq}} \\
&+C\,\left[|\alpha|^{-\frac{(p-1)q}{q-p+1}}J_1+J_2\right]^{\frac{1}{q+\alpha}}J_4^{\frac{q+\alpha-1}{q+\alpha}}.
\end{aligned}
\end{equation}
Let us prove that, for $\delta>0$ sufficiently small and $|\alpha|=-\frac{1}{\log\delta}>0$ sufficiently small
\begin{align}
&\limsup_{n\to\infty}\,\left(|\alpha|^{-\frac{(p-1)q}{q-p+1}} J_1 \right) \le C, \label{eq58}\\
&\limsup_{n\to\infty}\,\left(|\alpha|^{-\frac{(p-1)q}{q-(1-\alpha)(p-1)}} J_3 \right) \le C, \label{eq59} \\
&\limsup_{n\to\infty} \,J_2 \le C, \label{eq510}\\
&\limsup_{n\to\infty} \,J_4 \le C, \label{eq511}
\end{align}
for some $C>0$ independent of $\alpha$.

We start by proving \eqref{eq58}. Observe that
\begin{equation}\label{eq512}
J_1\le\,C(I_1+I_2),
\end{equation}
with $I_1$ and $I_2$ defined in \eqref{eq46} and \eqref{eq47}, respectively. Arguing as in the proof of Theorem \ref{teorema1}, using condition \eqref{eq27} in
(HP$2)-(ii)$ in place of condition \eqref{eq24} in (HP$1)-(ii)$, we obtain, similar to \eqref{eq412},
\begin{equation}\label{eq514}
I_2\le C\,n^{-\frac{|\alpha|}{q-p+1}}\left|\log\left(\frac{\delta}{n}\right)\right|^{\bar{s}_4}
\end{equation}
and, similar to \eqref{eq424},
\begin{equation}\label{eq515}
I_1\le C\,|\alpha|^{\frac{pq}{q-p+1}-\bar{s}_4-1} = C\,|\alpha|^{\frac{q(p-1)}{q-p+1}} .
\end{equation}
Combining \eqref{eq512}, \eqref{eq514} and \eqref{eq515}, for some $C>0$ and for every $n\in\mathbb{N}$, we have
\begin{equation}\label{eq516}
|\alpha|^{-\frac{q(p-1)}{q-p+1}}J_1\le C\left(1+|\alpha|^{-\frac{q(p-1)}{q-p+1}} n^{-\frac{|\alpha|}{q-p+1}}\left|\log\left(\frac{\delta}{n}\right)\right|^{\bar{s}_4}\right).
\end{equation}
We can compute the limit as $n\to\infty$ on both sides of \eqref{eq516}, thus we obtain \eqref{eq58}.

Now observe that
\begin{equation}\label{eq517}
J_2\le\,C(I_3+I_4),
\end{equation}
with $I_3$ and $I_4$ defined in \eqref{eq48} and \eqref{eq49}, respectively. Then arguing as in the proof of Theorem \ref{teorema1}, due to condition \eqref{eq25} in (HP$2)-(i)$ with $\varepsilon=-\frac{\alpha}{q-1}>0$ we deduce, similar to \eqref{eq417}, for some positive constant $C$
\begin{equation}\label{eq518}
I_4\le C\,n^{-\frac{|\alpha|}{q-1}}\left|\log\left(\frac{\delta}{n}\right)\right|^{\bar{s}_2},
\end{equation}
Moreover, similar to \eqref{eq430}, we have
\begin{equation}\label{eq519}
I_3 \le C|\alpha|^{\frac{1}{q-1}-\bar{s}_2}=C.
\end{equation}
Combining \eqref{eq517}, \eqref{eq518} and \eqref{eq519}, for some $C>0$, every $n\in\mathbb{N}$ and for small enough $|\alpha|>0$ we have
\begin{equation*}
J_2\le C\left(1+ n^{-\frac{|\alpha|}{q-1}}\left|\log\left(\frac{\delta}{n}\right)\right|^{\bar{s}_2}\right).
\end{equation*}
Letting $n\to\infty$ we obtain \eqref{eq510}.

We now proceed to estimate $J_4$. Observe that
\begin{equation}\label{eq520}
J_4\le C\left(I_5 + I_6\right),
\end{equation}
where
$$
\begin{aligned}
&I_5:=\int\int_{E_{\delta}^C}V^{-\frac{1}{q+\alpha-1}}\, |\partial_t \varphi|^{\frac{q+\alpha}{q+\alpha-1}}\,dx dt,
&\qquad I_6:=\int\int_{E_{\delta}^C}V^{-\frac{1}{q+\alpha-1}}\, \varphi^{\frac{q+\alpha}{q+\alpha-1}}|\partial_t \eta_n|^{\frac{q+\alpha}{q+\alpha-1}}\,dx dt.
\end{aligned}
$$
Due to \eqref{eq41} we have
\begin{equation}\label{eq521}
\begin{aligned}
I_5&\le C\int\int_{E_{\delta}^C}V^{-\frac{1}{q+\alpha-1}}\, |\alpha|^{\frac{q+\alpha}{q+\alpha-1}}\left[\frac{d(x)^{-\theta_2}+t^{\theta_1}}{\delta^{-\theta_2}}\right]^{\frac{(C_1\alpha-1)(q+\alpha)}{q+\alpha-1}}\left(\frac{t^{\theta_1-1}}{\delta^{-\theta_2}}\right)^{\frac{q+\alpha}{q+\alpha-1}}\,dx dt \\
&\le C\,|\alpha|^{\frac{q+\alpha}{q+\alpha-1}}\int\int_{E_{\delta}^C}V^{-\frac{1}{q+\alpha-1}}\left[d(x)^{-\theta_2}+t^{\theta_1}\right]^{\frac{(C_1\alpha-1)(q+\alpha)}{q+\alpha-1}}\delta^{\frac{\theta_2C_1\alpha(q+\alpha)}{q+\alpha-1}}t^{(\theta_1-1)\left(\frac{(q+\alpha)}{q+\alpha-1}\right)}\,dx dt\\
&\le C\,|\alpha|^{\frac{q+\alpha}{q+\alpha-1}}\int\int_{E_{\delta}^C}V^{-\frac{1}{q+\alpha-1}}\left[\left(d(x)^{-\theta_2}+t^{\theta_1}\right)^{-\frac{1}{\theta_2}}\right]^{-\theta_2(C_1\alpha-1)\left(\frac{q}{q-1}-\frac{\alpha}{(q+\alpha-1)(q-1)}\right)}\\ &\quad\quad \times t^{(\theta_1-1)\left(\frac{q}{q-1}-\frac{\alpha}{(q+\alpha-1)(q-1)}\right)}\,dx dt,
\end{aligned}
\end{equation}
where we have used that there exists a positive constant $\bar C$ such that
$$
\delta^{\theta_2C_1\alpha\left(\frac{q+\alpha}{q+\alpha-1}\right)}=e^{\theta_2C_1\alpha\left(\frac{q+\alpha}{q+\alpha-1}\right)\log\delta}=e^{\theta_2C_1\left(\frac{q+\alpha}{q+\alpha-1}\right)}\le \bar C\,.
$$
\begin{claim}{} If $f:(0,+\infty)\to[0,+\infty)$ is a non decreasing function and if (HP$2$)$-(i)$ holds then, for any $0<\varepsilon<\varepsilon_0$ and for any $\delta>0$ small enough, we can write
\begin{equation}\label{eq522}
\begin{aligned}
\int\int_{E_{\delta}^C} f\left(\left[\left(d(x)^{-\theta_2}+t^{\theta_1}\right)^{-\frac{1}{\theta_2}}\right]\right) &t^{(\theta_1-1)\left(\frac{q}{q-1}+\varepsilon\right)}V^{-\frac{1}{q-1}-\varepsilon}\,dx dt \\
& \le C \int_0^{2\delta} f(z) z^{-\bar{s}_1-C_0\varepsilon-1}|\log z|^{\bar{s}_2}\,dz,
\end{aligned}
\end{equation}
for some constant $C>0$ with $\bar{s}_1$ and $\bar{s}_2$ as in \eqref{eq22}.
\end{claim}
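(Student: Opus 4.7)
The plan is to mirror, essentially verbatim, the dyadic decomposition argument used by the authors to establish \eqref{eq420}; only the hypothesis invoked is different. Here the exponent on $V$ is $-\tfrac{1}{q-1}-\varepsilon$ and the exponent on $t$ is $(\theta_1-1)\bigl(\tfrac{q}{q-1}+\varepsilon\bigr)$, which matches the left-hand side of \eqref{eq26} exactly, so \eqref{eq26} (rather than \eqref{eq25}) is the correct estimate to apply. I anticipate no genuine obstacle; the only bookkeeping subtlety is tracking the sign of the $\varepsilon$-shift so that \eqref{eq26} and not \eqref{eq25} is invoked.

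First I would decompose $E_\delta^C$ as the disjoint union $\bigcup_{n=0}^{\infty}\bigl(E_{\delta/2^{n+1}}\setminus E_{\delta/2^n}\bigr)$. On the $n$-th shell the quantity $(d(x)^{-\theta_2}+t^{\theta_1})^{-1/\theta_2}$ lies in $(\delta/2^{n+1},\delta/2^n]$, so monotonicity of $f$ gives the pointwise bound
$$f\bigl((d(x)^{-\theta_2}+t^{\theta_1})^{-1/\theta_2}\bigr)\le f(\delta/2^n).$$
Applying \eqref{eq26} with parameter $\delta'=\delta/2^n$, so that the shell is exactly $E_{\delta'/2}\setminus E_{\delta'}$, then bounds the remaining integral over this shell by $C\,(\delta/2^n)^{-\bar s_1-C_0\varepsilon}\bigl|\log(\delta/2^n)\bigr|^{\bar s_2}$.

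The last step is to convert the resulting dyadic sum into the integral on the right-hand side of \eqref{eq522}. On $[\delta/2^n,\delta/2^{n-1}]$ the weight $z^{-\bar s_1-C_0\varepsilon-1}$ is comparable to $(\delta/2^n)^{-\bar s_1-C_0\varepsilon-1}$, the interval has length $\delta/2^n$, $|\log z|$ is comparable to $|\log(\delta/2^n)|$ for $\delta<1$ sufficiently small, and $f(\delta/2^n)\le f(z)$ on this interval by monotonicity. Combining these four observations yields
$$f(\delta/2^n)\,(\delta/2^n)^{-\bar s_1-C_0\varepsilon}\bigl|\log(\delta/2^n)\bigr|^{\bar s_2}\le C\int_{\delta/2^n}^{\delta/2^{n-1}} f(z)\,z^{-\bar s_1-C_0\varepsilon-1}\bigl|\log z\bigr|^{\bar s_2}\,dz,$$
and summing over $n\ge 0$ telescopes the right-hand side into $C\int_0^{2\delta} f(z)\,z^{-\bar s_1-C_0\varepsilon-1}|\log z|^{\bar s_2}\,dz$, which is the claimed bound. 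Since this is a line-by-line transcription of the proof of \eqref{eq420} with \eqref{eq26} replacing \eqref{eq24}, no step is substantively harder than what has already appeared in the paper.
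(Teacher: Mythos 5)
Your proposal is correct and follows exactly the paper's intended argument: the paper itself only remarks that \eqref{eq522} ``can be proven similarly to \eqref{eq420}, where one uses condition \eqref{eq26}'', and your dyadic decomposition into shells $E_{\delta/2^{n+1}}\setminus E_{\delta/2^n}$, the monotonicity bound on $f$, the application of \eqref{eq26} with $\delta'=\delta/2^n$, and the conversion of the dyadic sum into $C\int_0^{2\delta}f(z)\,z^{-\bar s_1-C_0\varepsilon-1}|\log z|^{\bar s_2}\,dz$ is precisely that argument. No gaps.
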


Inequality \eqref{eq522} can be proven similarly to \eqref{eq420}, where one uses condition \eqref{eq26} in (HP$2)-(i)$ instead of (HP$1)-(i)$.
By using the latter claim with $\varepsilon=\frac{|\alpha|}{(q+\alpha-1)(q-1)}>0$ we obtain
$$
I_5\le C\,|\alpha|^{\frac{q+\alpha}{q+\alpha-1}}\int_0^{2\delta} z^{-\theta_2(C_1\alpha-1)\left(\frac{q+\alpha}{q+\alpha-1}\right)-\bar{s}_1-C_0\varepsilon-1}|\log z|^{\bar{s}_2}\,dz.
$$
Then observe that, due to \eqref{eq51b}, for $|\alpha|>0$ small
$$
-\theta_2(C_1\alpha-1)\left(\frac{q+\alpha}{q+\alpha-1}\right)-\bar{s}_1-C_0\varepsilon\,\ge\, \frac{|\alpha|}{(q-1)^2}=:b
$$
Now we define
$$
y:=b\,\log z,
$$
then there exists $\bar C>0$ such that for $|\alpha|>0$ small
\begin{equation}\label{eq523}
\begin{aligned}
I_5&\le C\,|\alpha|^{\frac{q+\alpha}{q+\alpha-1}}\int_{-\infty}^0 e^y\left|\frac{y}{b}\right|^{\bar{s}_2}\frac{1}{b}\,dy\\
&\le C \,|\alpha|^{\frac{q+\alpha}{q+\alpha-1}}b^{-\bar{s}_2-1}\int_{-\infty}^0 e^{y}|y|^{\bar{s}_2}\,dy\\
&\le C\,|\alpha|^{\frac{q+\alpha}{q+\alpha-1}} \left(\frac{|\alpha|}{(q-1)^2}\right)^{-\bar{s}_2-1} \\
&\le C\,|\alpha|^{\frac{q+\alpha}{q+\alpha-1}-\frac{1}{q-1}-1}\,\,\le\,\, \bar{C}\,.
\end{aligned}
\end{equation}
On the other hand, due to \eqref{eq41} and condition \eqref{eq26} in (HP$2)-(i)$ with $\varepsilon=\frac{|\alpha|}{(q+\alpha-1)(q-1)}$, by using the definition of $\tilde{E}_{\delta,n}$ in \eqref{eq45b}, for every $n\in\mathbb{N}$ we have
\begin{equation}\label{eq524}
\begin{aligned}
I_6&\le C\int\int_{\tilde{E}_{\delta,n}}V^{-\frac{1}{q+\alpha-1}}\, \left[\left(\frac{\delta}{n}\right)^{\theta_2}t^{\theta_1-1}\right]^{\frac{q+\alpha}{q+\alpha-1}}n^{\theta_2\alpha C_1\left(\frac{q+\alpha}{q+\alpha-1}\right)}\,dx dt\\
&\le C\,n^{\theta_2(C_1\alpha-1)\left(\frac{q+\alpha}{q+\alpha-1}\right)}\delta^{\theta_2\left(\frac{q+\alpha}{q+\alpha-1}\right)}\int\int_{\tilde{E}_{\delta,n}}V^{-\frac{1}{q-1}-\varepsilon}t^{(\theta_1-1)\left(\frac{q}{q-1}+\varepsilon\right)} \,dx dt\\
&\le C\,n^{\theta_2(C_1\alpha-1)\left(\frac{q+\alpha}{q+\alpha-1}\right)}\delta^{\theta_2\left(\frac{q+\alpha}{q+\alpha-1}\right)}\left(\frac{\delta}{n}\right)^{-\bar{s}_1-C_0\varepsilon} \left|\log\left(\frac{\delta}{n}\right)\right|^{\bar{s}_2}\\
&\le C\,n^{-\frac{|\alpha|}{q+\alpha-1}\left[\theta_2C_1(q+\alpha)+\frac{\theta_2}{q-1}-\frac{C_0}{q-1}\right]} \delta^{\frac{|\alpha|}{(q+\alpha-1)(q-1)}[\theta_2-C_0]} \left|\log\left(\frac{\delta}{n}\right)\right|^{\bar{s}_2}
\end{aligned}
\end{equation}
Now observe that there exists a positive constant $\bar C$ such that
\begin{equation}\label{eq525}
\delta^{\frac{|\alpha|}{(q+\alpha-1)(q-1)}[\theta_2-C_0]}=e^{\frac{|\alpha|}{(q+\alpha-1)(q-1)}[\theta_2-C_0]\log\delta}=e^{\frac{C_0-\theta_2}{(q+\alpha-1)(q-1)}}\le \bar C\,,
\end{equation}
and due to \eqref{eq51b}
\begin{equation}\label{eq526}
-\frac{|\alpha|}{q+\alpha-1}\left[\theta_2C_1(q+\alpha)+\frac{\theta_2}{q-1}-\frac{C_0}{q-1}\right]\le-\frac{|\alpha|}{(q-1)^2}.
\end{equation}
Combining \eqref{eq525} and \eqref{eq526} with \eqref{eq524} we obtain
\begin{equation}\label{eq527}
I_6\,\le\, C\,n^{-\frac{|\alpha|}{(q-1)^2}}\left|\log\left(\frac{\delta}{n}\right)\right|^{\bar{s}_2}.
\end{equation}
We now substitute \eqref{eq523} and \eqref{eq527} into inequality \eqref{eq520} thus we have, for some $C>0$ and for every $n\in\mathbb{N}$
$$
J_4\le C\left[1+n^{-\frac{|\alpha|}{(q-1)^2}}\left|\log\left(\frac{\delta}{n}\right)\right|^{\bar{s}_2}\right]\,.
$$
Letting $n\to\infty$ we get \eqref{eq511}.

In order to estimate integral $J_3$ defined in \eqref{eq55}, we define, for sufficiently small $|\alpha|>0$, the positive constant $\lambda$
\begin{equation}\label{eq528}
\lambda:=\frac{|\alpha|q(p-1)}{(q-p+1)[q-(1-\alpha)(p-1)]}.
\end{equation}
Observe that, for sufficiently small $|\alpha|>0$
\begin{equation}\label{eq529}
\frac{|\alpha|q(p-1)}{(q-p+1)^2}\,<\,\lambda\,<\,\frac{2|\alpha|q(p-1)}{(q-p+1)^2},
\end{equation}
and
\begin{equation}\label{eq530}
\frac{pq}{q-(1-\alpha)(p-1)}=\frac{\bar{s}_3}{\theta_2}+\lambda\,p,
\end{equation}
where $\bar{s}_3$ has been defined in \eqref{eq22} and $\theta_2\ge 1$ as in (HP$2$).
Thus by the definition of $\varphi_n$ in \eqref{eq43} and by \eqref{eq528}, for sufficiently small $|\alpha|>0$ and for every $n\in\mathbb{N}$ we have
\begin{equation}\label{eq531}
\begin{aligned}
J_3&\le C\int\int_{E_{\delta}^C}V^{-\lambda-\bar{s}_4}|\nabla\varphi|^{\frac{\bar{s}_3}{\theta_2}+\lambda\,p}\,dx dt + C\int\int_{\tilde{E}_{\delta,n}}V^{-\lambda-\bar{s}_4}\left(\varphi|\nabla\eta_n|\right)^{\frac{\bar{s}_3}{\theta_2}+\lambda\,p}\,dx dt\\
&=:C(I_7+I_8)\,,
\end{aligned}
\end{equation}
where $\tilde{E}_{\delta,n}$ has been defined in \eqref{eq45b}. Due to the very definition of $\varphi$ and $\eta_n$ in \eqref{eq41} and \eqref{eq42} respectively, and by \eqref{eq530} we get
$$
\begin{aligned}
I_8&\le C\int\int_{\tilde{E}_{\delta,n}}V^{-\lambda-\bar{s}_4}n^{C_1\alpha\theta_2\left(\frac{\bar{s}_3}{\theta_2}+\lambda\,p\right)}\left(\frac{\delta}{n}\right)^{\theta_2\left(\frac{\bar{s}_3}{\theta_2}+\lambda\,p\right)}d(x)^{-(\theta_2+1)\left(\frac{\bar{s}_3}{\theta_2}+\lambda\,p\right)}\,dx dt\\
&\le C\,n^{(C_1\alpha-1)(\bar{s}_3+\lambda\,p\theta_2)}\delta^{\bar{s}_3+\lambda\,p\theta_2}\int\int_{\tilde{E}_{\delta,n}}V^{-\lambda-\bar{s}_4}d(x)^{-(\theta_2+1)p\left(\frac{q}{q-p+1}+\lambda\,\right)}\,dx dt
\end{aligned}
$$
Now we use condition \eqref{eq28} in (HP$2)-(ii)$ with $\varepsilon=\lambda$ and we obtain, for every $n\in\mathbb{N}$ and for sufficiently small $\delta>0$
$$
\begin{aligned}
I_8&\le C\,n^{(C_1\alpha-1)p\theta_2\left(\frac{q}{q-p+1}+\lambda\right)}\,\,\delta^{p\,\theta_2\left(\frac{q}{q-p+1}+\lambda\right)}\left(\frac{\delta}{n}\right)^{-\frac{pq}{q-p+1}\theta_2-C_0\lambda}\left|\log\left(\frac{\delta}{n}\right)\right|^{\bar{s}_4}\\
&\le C\,n^{C_1\alpha p\theta_2\left(\frac{q}{q-p+1}+\lambda\right)-\lambda\,p\theta_2+C_0\lambda}\,\,\delta^{p\,\theta_2\lambda-C_0\lambda}\left|\log\left(\frac{\delta}{n}\right)\right|^{\bar{s}_4}\,.
\end{aligned}
$$
Due to the definition of $\lambda$ in \eqref{eq528}, inequality \eqref{eq529} and the definition of $C_1$ in \eqref{eq51b}, for sufficiently small $|\alpha|>0$ we write
$$
\begin{aligned}
C_1&\alpha\, p\,\theta_2\left(\frac{q}{q-p+1}+\lambda\right)-\lambda\,p\theta_2+C_0\lambda\\
&=(C_1\alpha-1)\,p\,\theta_2\frac{|\alpha|q(p-1)}{(q-p+1)[q-(1-\alpha)(p-1)]}+C_1\alpha\frac{p\,q\,\theta_2}{q-p+1}+\frac{C_0|\alpha|q(p-1)}{(q-p+1)[q-(1-\alpha)(p-1)]}\\
&\le (C_1\alpha-1)\,p\,\theta_2\frac{|\alpha|q(p-1)}{(q-p+1)^2}+C_1\alpha\frac{p\,q\,\theta_2}{q-p+1}+\frac{2C_0|\alpha|q(p-1)}{(q-p+1)^2} \\
&= C_1\alpha\,\theta_2\left[\frac{|\alpha|q\,p^2}{(q-p+1)^2}-\frac{|\alpha|q\,p}{(q-p+1)^2}+\frac{q\,p}{q-p+1}\right]-\frac{|\alpha|q(p-1)}{(q-p+1)^2}[p\theta_2-2C_0] \\
&= -\frac{|\alpha|}{(q-p+1)^2}\left[C_1\theta_2\,p\,q(q+(p-1)(|\alpha|-1))+(p\,\theta_2-2C_0)q(p-1)\right] \\
&\le -\frac{|\alpha|q}{(q-p+1)^2}[C_1\theta_2\,p(q-p+1)-2C_0p]\\
&\le -\frac{|\alpha|q\,p}{(q-p+1)^2}\,.
\end{aligned}
$$
Moreover, since $\alpha=\frac{1}{\log\delta}<0$, there exists $\bar{C}$ such that
$$
\delta^{\lambda(p\theta_2-C_0)}=e^{\lambda(p\theta_2-C_0)\log\delta}<e^{\frac{-|\alpha|\,q\,(p-1)}{(q-p+1)^2}(p\,\theta_2-C_0)|\log\delta|}\le \bar C
$$
Therefore we obtain the following bound on $I_8$
\begin{equation}\label{eq532}
I_8\,\le \, C \,n^{-\frac{|\alpha|\,p\,q}{(q-p+1)^2}}\left|\log\left(\frac{\delta}{n}\right)\right|^{\bar{s}_4}\,.
\end{equation}
On the other hand, by using the definition of $\varphi$ in \eqref{eq41} we can write
$$
I_7\le C\,|\alpha|^{\frac{pq}{q-(1-\alpha)(p-1)}}\int\int_{E_{\delta}^C}V^{-\lambda-\bar{s}_4}\left[\left(\frac{d(x)^{-\theta_2}+t^{\theta_1}}{\delta^{-\theta_2}}\right)^{C_1\alpha-1}\delta^{\theta_2}d(x)^{-(\theta_2+1)}\right]^{\frac{\bar{s}_3}{\theta_2}+\lambda\,p}\,dx dt\,,
$$
and we observe that there exists $\bar{C}>0$ such that for $|\alpha|>0$ small
$$
\delta^{C_1\alpha\,\theta_2\left( \frac{\bar{s}_3}{\theta_2}+\lambda\,p\right)}= \delta^{C_1\alpha\,\theta_2\left( \frac{p\,q}{q-(1-\alpha)(p-1)}\right)}<\delta^{C_1\alpha\,\theta_2\left(\frac{2pq}{q-p+1}\right)}=e^{C_1\alpha\,\theta_2\left(\frac{2pq}{q-p+1}\right)\log\delta}\le\,\bar C\,.
$$
Therefore we get
$$
I_7\le C\,|\alpha|^{\frac{pq}{q-(1-\alpha)(p-1)}}\int\int_{E_{\delta}^C}V^{-\lambda-\bar{s}_4}\left[d(x)^{-\theta_2}+t^{\theta_1}\right]^{(C_1\alpha-1)\left(\frac{\bar{s}_3}{\theta_2}+\lambda p\right)}d(x)^{-(\theta_2+1)\left(\frac{\bar{s}_3}{\theta_2}+\lambda\,p\right)}\,dx dt\,,
$$
We now state the following
\begin{claim}{} Let $f:(0,+\infty)\to[0,+\infty)$ be a non decreasing function and suppose that (HP$2$)$-(ii)$ holds. Then, for any $0<\varepsilon<\varepsilon_0$ and for any $\delta>0$ small enough, we can write
\begin{equation}\label{eq533}
\begin{aligned}
\int\int_{E_{\delta}^C} f\left(\left[\left(d(x)^{-\theta_2}+t^{\theta_1}\right)^{-\frac{1}{\theta_2}}\right]\right) &d(x)^{-(\theta_2+1)p\left(\frac{q}{q-p+1}+\varepsilon\right)}V^{-\frac{p-1}{q-p+1}-\varepsilon}\,dx dt \\
& \le C \int_0^{2\delta} f(z) z^{-\bar{s}_3-C_0\varepsilon-1}|\log z|^{\bar{s}_4}\,dz,
\end{aligned}
\end{equation}
for some constant $C>0$ with $\bar{s}_3$ and $\bar{s}_4$ as in \eqref{eq22}.
\end{claim}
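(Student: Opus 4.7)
The plan is to adapt the argument already carried out in the proof of \eqref{eq420} (and reused for \eqref{eq427}, \eqref{eq522}), substituting the sharper two--sided control of (HP$2)$ in place of (HP$1)$. Specifically, the desired inequality is structurally identical to \eqref{eq420}, except that the exponent on $V$ is $-\frac{p-1}{q-p+1}-\varepsilon$ rather than $-\frac{p-1}{q-p+1}+\varepsilon$ and the weight on $d(x)$ is raised to $p\bigl(\frac{q}{q-p+1}+\varepsilon\bigr)$ rather than $p\bigl(\frac{q}{q-p+1}-\varepsilon\bigr)$. These are precisely the exponents appearing in \eqref{eq28}, which is the ``$+\varepsilon$'' companion to \eqref{eq24}. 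So the hypothesis (HP$2)$-$(ii)$ was designed to make the same dyadic reduction work for this opposite sign of $\varepsilon$.

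First I would dyadically decompose $E_{\delta}^{C}=\bigcup_{n=0}^{\infty}\bigl(E_{\delta/2^{n+1}}\setminus E_{\delta/2^{n}}\bigr)$. On the $n$-th annulus, the defining inequality of $E_{\delta/2^{n}}$ gives $\bigl(d(x)^{-\theta_2}+t^{\theta_1}\bigr)^{-1/\theta_2}\le \delta/2^{n}$, so by monotonicity of $f$,
\[
f\!\left(\bigl(d(x)^{-\theta_2}+t^{\theta_1}\bigr)^{-1/\theta_2}\right)\le f\!\left(\tfrac{\delta}{2^{n}}\right)\qquad \text{on } E_{\delta/2^{n+1}}\setminus E_{\delta/2^{n}}.
\]
Next I would apply \eqref{eq28} with $\delta$ replaced by $\delta/2^{n}$ to bound
\[
\int\!\!\int_{E_{\delta/2^{n+1}}\setminus E_{\delta/2^{n}}} d(x)^{-(\theta_2+1)p\left(\frac{q}{q-p+1}+\varepsilon\right)} V^{-\frac{p-1}{q-p+1}-\varepsilon}\,dx\,dt
\le C\Bigl(\tfrac{\delta}{2^{n}}\Bigr)^{-\bar{s}_3-C_0\varepsilon}\Bigl|\log\tfrac{\delta}{2^{n}}\Bigr|^{\bar{s}_4},
\]
which requires $\delta/2^{n}\in(0,\delta_0)$, valid for all $n\ge 0$ provided $\delta\in(0,\delta_0)$.

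Summing over $n$ yields
\[
\int\!\!\int_{E_{\delta}^{C}}\cdots\,dx\,dt\le C\sum_{n=0}^{\infty} f\!\left(\tfrac{\delta}{2^{n}}\right)\Bigl(\tfrac{\delta}{2^{n}}\Bigr)^{-\bar{s}_3-C_0\varepsilon}\Bigl|\log\tfrac{\delta}{2^{n}}\Bigr|^{\bar{s}_4}.
\]
Finally I would recognize the right-hand side as a Riemann-sum-type bound for the integral: on each interval $\bigl[\delta/2^{n},\delta/2^{n-1}\bigr]$ the integrand $f(z)\,z^{-\bar{s}_3-C_0\varepsilon-1}|\log z|^{\bar{s}_4}$ is, thanks to monotonicity of $f$ and comparability of powers and logarithms on a dyadic interval, bounded below by a constant multiple of $f(\delta/2^{n})\,(\delta/2^{n})^{-\bar{s}_3-C_0\varepsilon-1}|\log(\delta/2^{n})|^{\bar{s}_4}$; multiplying by the interval length $\delta/2^{n}$ absorbs the $-1$ in the exponent, giving exactly the $n$-th summand up to a constant. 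Adding the dyadic contributions produces $C\int_{0}^{2\delta}f(z)\,z^{-\bar{s}_3-C_0\varepsilon-1}|\log z|^{\bar{s}_4}\,dz$, as claimed.

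The only subtle point, which is really where one must be careful, is ensuring that the passage from the discrete sum to the integral is valid for every small enough $\varepsilon$ uniformly: this works because the powers of $z$ involved are negative (so the comparability constant in each dyadic interval is bounded independently of $\varepsilon\in(0,\varepsilon_0)$), and because $|\log(\delta/2^{n})|$ is positive and increasing in $n$, giving the correct pointwise comparison $|\log(\delta/2^{n})|\le C|\log z|$ on $[\delta/2^{n},\delta/2^{n-1}]$ for $\delta$ small. Everything else is parallel to the already established \eqref{eq420}, so no new ingredient is required.
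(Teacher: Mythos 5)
Your proposal is correct and is essentially the paper's own argument: the paper proves this claim by remarking that it follows exactly as \eqref{eq420}, i.e.\ via the dyadic decomposition of $E_\delta^C$ into the annuli $E_{\delta/2^{n+1}}\setminus E_{\delta/2^{n}}$, the monotonicity bound $f\le f(\delta/2^n)$ on each annulus, an application of \eqref{eq28} with $\delta$ replaced by $\delta/2^n$, and the comparison of the resulting series with the integral $\int_0^{2\delta}$. Your handling of the sum-to-integral comparison (negative power of $z$ and the logarithm comparable across each dyadic interval, uniformly in $\varepsilon\in(0,\varepsilon_0)$) fills in the details the paper leaves implicit, and is accurate.
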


Inequality \eqref{eq533} can be proven similarly to \eqref{eq420}, where one uses condition \eqref{eq28} in (HP$2)-(ii)$ instead of (HP$1)-(ii)$.
By using the latter claim with $\varepsilon=\lambda$ we get
\begin{equation}\label{eq534}
I_7\le C\,|\alpha|^{\frac{pq}{q-(1-\alpha)(p-1)}}\int_0^{2\delta} z^{-\theta_2(C_1\alpha-1)\left(\frac{\bar{s}_3}{\theta_2}+\lambda\,p\right)-\bar{s}_3-C_0\,\lambda-1}|\log z|^{\bar{s}_4}\,dz
\end{equation}
Observe that, since $\alpha<0$ and due to \eqref{eq51b}
$$
\begin{aligned}
-\theta_2&(C_1\alpha-1)\left(\frac{\bar{s}_3}{\theta_2}+\lambda\,p\right)-\bar{s}_3-C_0\,\lambda\\
&=-\theta_2C_1\alpha\frac{p\,q}{q-(1-\alpha)(p-1)}+p\,\theta_2\frac{|\alpha|\,q(p-1)}{(q-p+1)[q-(1-\alpha)(p-1)]}-C_0\frac{|\alpha|\,q(p-1)}{(q-p+1)[q-(1-\alpha)(p-1)]}\\
&\ge |\alpha|\theta_2\,C_1\frac{p\,q}{(q-p+1)^2}+p\,\theta_2\frac{|\alpha|\,q(p-1)}{(q-p+1)^2}-C_0\frac{2\,|\alpha|\,q(p-1)}{(q-p+1)^2} \\
&\ge \frac{|\alpha|\,q(p-1)}{(q-p+1)^2}\{\theta_2\,C_1-2C_0\}\\
&\ge\frac{|\alpha|\,q(p-1)}{(q-p+1)^2} \,=:\,a\,.
\end{aligned}
$$
We now set $y:=a\log z$ then, by using the definition of $\bar{s}_4$ in \eqref{eq22}, from \eqref{eq534} we deduce
\begin{equation}\label{eq535}
I_7\le C\,|\alpha|^{\frac{pq}{q-(1-\alpha)(p-1)}}a^{-\bar{s}_4-1}\int_{-\infty}^0 e^y\,|y|^{\bar{s}_4}\,dy\,\le\, C\,|\alpha|^{\frac{pq}{q-(1-\alpha)(p-1)}-\frac{q}{q-p+1}}.
\end{equation}
Combining together \eqref{eq531}, \eqref{eq532} and \eqref{eq535}, for any $\delta>0$ small enough and for every $n\in\mathbb{N}$ we have
$$
|\alpha|^{-\frac{q(p-1)}{q-(1-\alpha)(p-1)}}J_3\,\le\,C\,|\alpha|^{-\frac{q(p-1)}{q-(1-\alpha)(p-1)}}\left[|\alpha|^{\frac{pq}{q-(1-\alpha)(p-1)}-\frac{q}{q-p+1}}+ n^{-\frac{|\alpha|\,p\,q}{(q-p+1)^2}}\left|\log\left(\frac{\delta}{n}\right)\right|^{\bar{s}_4}\right].
$$
Then letting $n\to\infty$, for every $\delta>0$ small enough we obtain \eqref{eq59}. Now using \eqref{eq58}, \eqref{eq59}, \eqref{eq510} and \eqref{eq511} in \eqref{eq57}, for any $\delta>0$ sufficiently small and for every $n\in\mathbb{N}$ we get
$$
\begin{aligned}
\int_{0}^{\infty}\int_{\Omega}\varphi_n^s\,u^q\,V\,d\mu dt\,
&\le\,C'\,\left(\int\int_{E_{\delta}^C}\varphi_n^s\,u^q\,V\,dx dt\right)^{\frac{(1-\alpha)(p-1)}{p\,q}}\,+\,C''\\
&\le\,C'\,\left(\int_0^{\infty}\int_{\Omega}\varphi_n^s\,u^q\,V\,dx dt\right)^{\frac{(1-\alpha)(p-1)}{p\,q}}\,+\,C''\\
&\le\,C'\left(1+\int_0^{\infty}\int_{\Omega}\varphi_n^s\,u^q\,V\,dx dt\right)^{\frac{p-1}{p}}\,+\,C'' \\
&\le\,A\left(\int_0^{\infty}\int_{\Omega}\varphi_n^s\,u^q\,V\,dx dt\right)^{\frac{p-1}{p}}\,+\,B\,,
\end{aligned}
$$
where $A$, $B$ are positive constants independent of $n$, $\delta$ and $\frac{p-1}{p}\in (0,1)$. This easily implies that there exists $C>0$ such that, for sufficiently small $\delta>0$ and for every $n\in\mathbb{N}$
\begin{equation}\label{eq536}
\int_{0}^{\infty}\int_{\Omega}\varphi_n^s\,u^q\,V\,dx dt\,\le C\,.
\end{equation}
We then have
$$
\int\int_{E_\delta}u^q\,V\,dx dt\le\int_{0}^{\infty}\int_{\Omega}\varphi_n^s\,u^q\,V\,dx dt\,\le C\,.
$$
Thus letting $\delta\to 0$ we obtain that
\begin{equation}\label{eq537}
u^q\in L^1\left(\Omega\times(0,\infty);\,V\,dx dt\right)
\end{equation}
Now, we want to show that
$$
\int_0^{\infty}\int_\Omega u^q\,V\,dx dt=0.
$$
We use Lemma \ref{lemma3} where $\varphi$ is replaced by $\varphi_n$
\begin{equation}\label{eq537b}
\begin{aligned}
\int\int_{E_{\delta}}u^q\,V\,dx dt\,&\le\, \int_0^{\infty}\int_{\Omega}\varphi_n^su^q\,V\,dx dt \\
&\le C\,\left[|\alpha|^{-1-\frac{q(p-1)}{q-p+1}}\int_0^{\infty}\int_{\Omega}V^{-\frac{p+\alpha-1}{q-p+1}}|\nabla\varphi_n|^{\frac{p(q+\alpha)}{q-p+1}}\,dx dt \right. \\
&+\left. |\alpha|^{-1}\int_0^{\infty}\int_{\Omega}V^{-\frac{\alpha+1}{q-1}}|\partial_t\varphi_n|^{\frac{q+\alpha}{q-1}}\,dx dt\right]^{\frac{p-1}{p}}\left(\int\int_{E_{\delta}^C}\varphi_n^su^qV\,dx dt\right)^{\frac{(1-\alpha)(p-1)}{p\,q}}\\
&\times\left[\int\int_{E_{\delta}^C}V^{-\frac{(1-\alpha)(p-1)}{q-(1-\alpha)(p-1)}}|\nabla\varphi_n|^{\frac{pq}{q-(1-\alpha)(p-1)}}\,dx dt\right]^{\frac{q-(1-\alpha)(p-1)}{p\,q}} \\
&+\,C\,\left[\int\int_{E_{\delta}^C}\varphi_n^su^q\,V\,dx dt \right]^{\frac{1}{q}}\left[\int_0^{\infty}\int_{\Omega}V^{-\frac{1}{q-1}}|\partial_t\varphi_n|^{\frac{q}{q-1}}\,dxdt\right]^{\frac{q-1}{q}}\\
&\le\,C\,\left[|\alpha|^{-\frac{q(p-1)}{q-p+1}}J_1\right]^{\frac{p-1}{p}}\left(\int\int_{E_{\delta}^C}\varphi_n^su^qV\,dx dt\right)^{\frac{(1-\alpha)(p-1)}{p\,q}}\left[|\alpha|^{-\frac{q(p-1)}{q-(1-\alpha)(p-1)}}J_3\right]^{\frac{q-(1-\alpha)(p-1)}{p\,q}} \\
&+C\,J_2^{\frac{p-1}{p}}\left(\int\int_{E_{\delta}^C}\varphi_n^su^qV\,dx dt\right)^{\frac{(1-\alpha)(p-1)}{p\,q}}\left[|\alpha|^{-\frac{q(p-1)}{q-(1-\alpha)(p-1)}}J_3\right]^{\frac{q-(1-\alpha)(p-1)}{p\,q}}\\
&+\,C\,\left(\int\int_{E_{\delta}^C}\varphi_n^su^qV\,dx dt\right)^{\frac{1}{q}}\,J_5^{\frac{q-1}{q}}\,,
\end{aligned}
\end{equation}
where $J_1$, $J_2$, $J_3$ have been defined in \eqref{eq52}, \eqref{eq53}, \eqref{eq54} and
$$
J_5:=\int_0^{\infty}\int_{\Omega}V^{-\frac{1}{q-1}}|\partial_t\varphi_n|^{\frac{q}{q-1}}\,dx dt\,.
$$
Due to the definition of $\varphi_n$ in \eqref{eq43} we have
\begin{equation}\label{eq538}
\begin{aligned}
J_5&\le C\int_0^{\infty}\int_{\Omega}V^{-\frac{1}{q-1}}|\partial_t\varphi|^{\frac{q}{q-1}}\,dx dt\, + \int_0^{\infty}\int_{\Omega}V^{-\frac{1}{q-1}}\varphi^{\frac{q}{q-1}}|\partial_t\eta_n|^{\frac{q}{q-1}}\,dx dt\, \\
&:=C(I_9\,+\,I_{10})\,.
\end{aligned}
\end{equation}
By \eqref{eq41} we have
\begin{equation}\label{eq539}
I_9\le C\,|\alpha|^{\frac{q}{q-1}}\int\int_{E_{\delta}^C}V^{-\frac{1}{q-1}}\left[\left(d(x)^{-\theta_2}+t^{\theta_1}\right)^{-\frac{1}{\theta_2}}\right]^{-\theta_2(C_1\alpha-1)\frac{q}{q-1}}t^{(\theta_1-1)\frac{q}{q-1}}\,dx dt
\end{equation}
We now state the following
\begin{claim}{} Let $f:(0,+\infty)\to[0,+\infty)$ be a non decreasing function and suppose that (HP$2$)-$(i)$ holds. Then, for any $\delta>0$ small enough, we can write
\begin{equation}\label{eq540}
\begin{aligned}
\int\int_{E_{\delta}^C} f\left(\left[\left(d(x)^{-\theta_2}+t^{\theta_1}\right)^{-\frac{1}{\theta_2}}\right]\right) &t^{(\theta_1-1)\left(\frac{q}{q-1}\right)}V^{-\frac{1}{q-1}}\,dx dt \\
& \le C \int_0^{2\delta} f(z) z^{-\bar{s}_1-1}|\log z|^{\bar{s}_2}\,dz,
\end{aligned}
\end{equation}
for some constant $C>0$ with $\bar{s}_1$ and $\bar{s}_2$ as in \eqref{eq22}.
\end{claim}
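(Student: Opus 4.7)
The plan is to prove this claim by the same dyadic-decomposition scheme that was used for the four analogous claims \eqref{eq420}, \eqref{eq427}, \eqref{eq522}, and \eqref{eq533}. The only novelty here is that the statement carries no $\varepsilon$-parameter, so hypothesis (HP$2$)$-(i)$ must be invoked in the limit $\varepsilon\to 0^+$.

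First I would decompose $E_\delta^C$ into dyadic annuli. Since $E_\delta\subset E_{\delta/2}\subset E_{\delta/4}\subset\cdots$ and $\bigcup_n E_{\delta/2^n}=S$, one has
$$E_\delta^C=\bigsqcup_{n=0}^{\infty}\left(E_{\delta/2^{n+1}}\setminus E_{\delta/2^n}\right).$$
On the annulus $E_{\delta/2^{n+1}}\setminus E_{\delta/2^n}$ the inequality $(d(x)^{-\theta_2}+t^{\theta_1})^{-1/\theta_2}<\delta/2^n$ holds, so by monotonicity of $f$ the integrand is majorized by $f(\delta/2^n)\,t^{(\theta_1-1)q/(q-1)}V^{-1/(q-1)}$. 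Next I would invoke (HP$2$)$-(i)$ in its $\varepsilon=0$ form to write
$$\int_{E_{\delta'/2}\setminus E_{\delta'}} t^{(\theta_1-1)q/(q-1)}V^{-1/(q-1)}\,dxdt\,\le\,C\,\delta'^{-\bar{s}_1}\left|\log\delta'\right|^{\bar{s}_2}$$
with $\delta'=\delta/2^n$, and sum over $n$ to obtain
$$\int\!\!\int_{E_\delta^C}f(\ldots)\,t^{(\theta_1-1)q/(q-1)}V^{-1/(q-1)}\,dxdt\,\le\,C\sum_{n=0}^{\infty}f(\delta/2^n)(\delta/2^n)^{-\bar{s}_1}\left|\log(\delta/2^n)\right|^{\bar{s}_2}.$$
A routine interval-by-interval comparison (the dyadic interval $[\delta/2^n,\delta/2^{n-1}]$ has length $\delta/2^n$, and on it $f$, $z^{-\bar{s}_1-1}$, $|\log z|^{\bar{s}_2}$ are each comparable, up to universal constants, to their values at $z=\delta/2^n$) shows that each summand is bounded by $C\int_{\delta/2^n}^{\delta/2^{n-1}}f(z)z^{-\bar{s}_1-1}|\log z|^{\bar{s}_2}\,dz$. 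Telescoping then produces exactly the claimed $C\int_0^{2\delta}f(z)z^{-\bar{s}_1-1}|\log z|^{\bar{s}_2}\,dz$.

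I expect the only delicate point to be the passage to $\varepsilon=0$ in (HP$2$)$-(i)$, since the earlier four claims all carried a positive $\varepsilon$ in their conclusions and could apply \eqref{eq25}--\eqref{eq28} directly. Here I would appeal to Fatou's lemma applied to \eqref{eq25}: the nonnegative integrand $t^{(\theta_1-1)(q/(q-1)-\varepsilon)}V^{-1/(q-1)+\varepsilon}$ converges pointwise a.e.\ to $t^{(\theta_1-1)q/(q-1)}V^{-1/(q-1)}$ as $\varepsilon\to 0^+$, while the right-hand side bound $C\delta'^{-\bar{s}_1-C_0\varepsilon}|\log\delta'|^{\bar{s}_2}$ tends to $C\delta'^{-\bar{s}_1}|\log\delta'|^{\bar{s}_2}$ with the constant $C$ unchanged. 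Once this $\varepsilon=0$ limit of (HP$2$)$-(i)$ is secured, the rest of the argument is verbatim that of \eqref{eq420} and presents no difficulty.
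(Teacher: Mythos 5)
Your proposal is correct and follows essentially the same route as the paper, which simply refers back to the dyadic-annulus argument used for \eqref{eq420} and invokes (HP$2$)--$(i)$ ``with $\varepsilon=0$''. Your Fatou's-lemma justification for passing to $\varepsilon=0$ in \eqref{eq25} is a valid (and slightly more careful) way of securing the step the paper takes for granted.
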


Inequality \eqref{eq540} can be proven similarly to \eqref{eq420} where one uses the condition (HP$2)-(i)$ with $\varepsilon=0$ instead of (HP$1)-(ii)$.
We now use the latter claim in \eqref{eq539}, thus we have
\begin{equation}\label{eq541}
\begin{aligned}
I_9&\le C\,|\alpha|^{\frac{q}{q-1}}\int_0^{2\delta}z^{-\theta_2(C_1\alpha-1)\frac{q}{q-1}-\frac{q}{q-1}\theta_2-1}|\log z|^{\bar{s}_2}\,dz\\
&\le C\,|\alpha|^{\frac{q}{q-1}}\int_0^{2\delta}z^{-\theta_2C_1\alpha\frac{q}{q-1}-1}|\log z|^{\bar{s}_2}\,dz\\
&\le C\,|\alpha|^{\frac{q}{q-1}}\int_{-\infty}^0e^{y}\left|\frac{y}{\gamma}\right|^{\bar{s}_2}\frac{1}{\gamma}\,dy\\
&\le C\,|\alpha|^{\frac{q}{q-1}-\bar{s}_2-1} \\
&\le C
\end{aligned}
\end{equation}
where
$$
\gamma:=|\alpha|\,\theta_2\, C_1\frac{q}{q-1}\,\quad\quad\text{and}\quad\quad y:=\gamma\,\log z\,.
$$
On the other hand, by \eqref{eq42} we have
$$
\begin{aligned}
I_{10}&\le\, C\int\int_{\tilde{E}_{\delta,n}}V^{-\frac{1}{q-1}}\left[n^{\theta_2\,C_1\alpha}\left(\frac{\delta}{n}\right)^{\theta_2}t^{\theta_1-1}\right]^{\frac{q}{q-1}}\,dx dt \\
&\le\,C\,n^{\theta_2\,(C_1\alpha-1)\frac{q}{q-1}}\,\delta^{\theta_2\frac{q}{q-1}}\int\int_{\tilde{E}_{\delta,n}}V^{-\frac{1}{q-1}}t^{(\theta_1-1)\frac{q}{q-1}}dx dt
\end{aligned}
$$
Then, due to (HP$2)-(ii)$ with $\varepsilon=0$ we have
\begin{equation}\label{eq542}
\begin{aligned}
I_{10}&\le\,C\,n^{\theta_2\,(C_1\alpha-1)\frac{q}{q-1}+\frac{q}{q-1}\theta_2}\,\delta^{\theta_2\frac{q}{q-1}-\frac{q}{q-1}\theta_2}\left|\log\left(\frac{\delta}{n}\right)\right|^{\bar{s}_2}\\
&\le n^{-|\alpha|\theta_2\,C_1\frac{q}{q-1}}\left|\log\left(\frac{\delta}{n}\right)\right|^{\bar{s}_2}
\end{aligned}
\end{equation}
Now, combining \eqref{eq538}, \eqref{eq541} and \eqref{eq542} we get
$$
J_5\le\, C\left[1+n^{-|\alpha|\theta_2\,C_1\frac{q}{q-1}}\left|\log\left(\frac{\delta}{n}\right)\right|^{\bar{s}_2}\right]
$$
By letting $n\to\infty$ we obtain
\begin{equation}\label{eq543}
\limsup_{n\rightarrow+\infty}J_5\le \,C
\end{equation}
Finally we use inequalities \eqref{eq58}, \eqref{eq59}, \eqref{eq510} and \eqref{eq543} into \eqref{eq537b} and, passing to the $\limsup$ as $n\to\infty$, we obtain for some constant $C>0$
\begin{equation}\label{eq544}
\int\int_{E_{\delta}} u^q\,V\,dx dt\le C\left[\left(\int\int_{E_{\delta}^C}u^q\,V\,dx dt\right)^{\frac{(1-\alpha)(p-1)}{p}}+\left(\int\int_{E_{\delta}^C}u^q\,V\,dx dt\right)^{\frac{1}{q}}\right].
\end{equation}
Now we can pass to the limit in \eqref{eq544} as $\delta\to0$, and thus as $\alpha\to 0^-$, and conclude by using Fatou's Lemma and \eqref{eq537} that
$$
\int_0^{\infty}\int_{\Omega}u^q\,V\,dx dt=0.
$$
Thus $u=0$ a.e. in $\Omega\times[0,\infty)$.

\end{proof}

\section{Proof of Theorem \ref{teorema3}}\label{prooflin}

Throughout this section we always assume that $\partial\Omega$ is of class $C^3$. We now introduce two Lemmas that will be used in the proof of Theorem \ref{teorema3}. Let us first observe that, under the assumptions of Theorem \ref{teorema3}, the Green function $G(x,y)$ associated to the laplacian operator $-\Delta$ satisfies the following bound
\begin{equation}\label{eq65}
G(x,y)\le C\min\left\{1,\, \frac{d(x)d(y)}{|x-y|^2}\right\}\,|x-y|^{2-N},
\end{equation}
for some $C>0$ and $d(x)$ as in \eqref{eq20}. See \cite{Hu}, \cite{Z}; see also \cite{Da}, \cite{FMT}.

\begin{lemma}\label{lemma4}
Suppose that \eqref{eq65} holds and define
\begin{equation}\label{eq66}
\psi(x):=\int_{\Omega} G(x,y)\,d(y)^{\beta}\,dy,
\end{equation}
for $\beta>-1$. Then there exist $c=c(\beta)>0$ such that
\begin{equation}\label{eq67}
0\le\psi(x)\le c\,d(x) \quad \text{for every}\,\,\,x\in\Omega,
\end{equation}
\end{lemma}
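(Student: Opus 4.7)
The non-negativity $\psi(x)\geq 0$ is immediate from $G\geq 0$ and $d^{\beta}\geq 0$, so the entire task is the upper bound. The plan is to split the integration region into a near part and a far part, apply the appropriate branch of the $\min$ in \eqref{eq65} on each piece, and exploit that $\beta>-1$ ensures integrability of the resulting radial power.

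Fix $x\in\Omega$ and write $\Omega=B\cup A$, where
\[
B:=\{y\in\Omega\,:\,|x-y|<d(x)/2\},\qquad A:=\Omega\setminus B.
\]
Note that $B\subset\Omega$ and that on $B$ the triangle inequality gives $d(x)/2\leq d(y)\leq 3d(x)/2$, so $d(y)^{\beta}\leq C\,d(x)^{\beta}$ (with $C$ depending only on $\beta$, in both cases $\beta\geq 0$ and $-1<\beta<0$). On $B$ I use the branch $G(x,y)\leq C|x-y|^{2-N}$ of the estimate \eqref{eq65}, which yields
\[
\int_B G(x,y)\,d(y)^{\beta}\,dy\leq C\,d(x)^{\beta}\int_{|x-y|<d(x)/2}|x-y|^{2-N}\,dy\leq C\,d(x)^{\beta+2}.
\]
Since $\beta+1>0$ and $\Omega$ is bounded, $d(x)^{\beta+2}=d(x)\cdot d(x)^{\beta+1}\leq C\,\diam(\Omega)^{\beta+1}\,d(x)$, so the contribution of $B$ is bounded by $C\,d(x)$.

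On $A$ one has $|x-y|\geq d(x)/2$, hence $d(x)+|x-y|\leq 3|x-y|$, and also $d(y)\leq d(x)+|x-y|$. Using $\beta+1>0$, this gives $d(y)^{\beta+1}\leq C|x-y|^{\beta+1}$. I then apply the other branch of \eqref{eq65}, namely $G(x,y)\leq C\,d(x)d(y)|x-y|^{-N}$, to obtain
\[
\int_A G(x,y)\,d(y)^{\beta}\,dy\leq C\,d(x)\int_A\frac{d(y)^{\beta+1}}{|x-y|^{N}}\,dy\leq C\,d(x)\int_{d(x)/2\leq|x-y|\leq\diam(\Omega)}|x-y|^{\beta+1-N}\,dy.
\]
Passing to polar coordinates centred at $x$, the inner integral equals a constant times
\[
\int_{d(x)/2}^{\diam(\Omega)}r^{\beta}\,dr,
\]
which is finite since $\beta>-1$ and bounded by $\diam(\Omega)^{\beta+1}/(\beta+1)$, depending only on $\beta$ and on $\Omega$. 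Hence the $A$ contribution is also $\leq C\,d(x)$, and adding the two estimates gives $\psi(x)\leq c\,d(x)$ for every $x\in\Omega$, with $c=c(\beta,\Omega)$.

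The main (only) obstacle is keeping the dependence on $\beta$ clean in both subcases $\beta\geq 0$ and $-1<\beta<0$: the former needs $d(y)\leq 3d(x)/2$ on $B$, the latter needs $d(y)\geq d(x)/2$ on $B$, and in $A$ one needs $\beta+1>0$ both to estimate $d(y)^{\beta+1}$ via $(d(x)+|x-y|)^{\beta+1}$ and to ensure that $\int_0^{\diam(\Omega)} r^{\beta}\,dr$ converges. All of these are encoded precisely in the hypothesis $\beta>-1$, so no further assumption is needed.
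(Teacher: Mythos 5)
Your proposal is correct and follows essentially the same strategy as the paper's proof: split $\Omega$ into a near and a far region relative to $x$, apply the $|x-y|^{2-N}$ branch of \eqref{eq65} on the near part and the $d(x)d(y)|x-y|^{-N}$ branch on the far part, and use $\beta>-1$ to make the resulting radial integrals converge. The only immaterial differences are that the paper's split is governed by $d(y)\gtrless 2|x-y|$ rather than $|x-y|\gtrless d(x)/2$, and that it reduces at the outset to $-1<\beta\le 0$ instead of treating both sign cases of $\beta$ directly as you do.
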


\begin{proof}
Let us fix $x\in\Omega$ such that $d(x)>0$. Then, for any $y\in\Omega$
\begin{equation}\label{eq68}
d(y)\ge 2|x-y|,
\end{equation}
or
\begin{equation}\label{eq69}
d(y)\le 2|x-y|.
\end{equation}
Therefore we write
$$
\psi(x)=\int_{\{d(y)\ge2|x-y|\}}G(x,y)d(y)^{\beta}\,dy + \int_{\{d(y)\le2|x-y|\}}G(x,y)d(y)^{\beta}\,dy
$$
Moreover observe that, for any $z\in\partial\Omega$,
$$
|y-z|\le|x-z|+|y-x|.
$$
If we fix $z\in\partial\Omega$ such that $d(x)=|x-z|$ then the latter can be rewritten as
\begin{equation}\label{eq610}
|y-z|\le d(x)+|y-x|.
\end{equation}
Combining \eqref{eq68} and \eqref{eq610}, it follows that
\begin{equation}\label{eq611}
2|x-y|\le d(y)\le |y-z|\le d(x)+|y-x| \Longrightarrow |x-y|\le d(x).
\end{equation}
If $\beta>0$ we have
$$\int_\Omega G(x,y)d(y)^\beta\,dy\leq (\operatorname{diam}\Omega)^\beta\int_\Omega G(x,y)\,dy,$$
thus w.l.o.g. we can consider only the case when $-1<\beta\le 0$. Then, due to \eqref{eq65}, \eqref{eq68} and \eqref{eq611}
$$
\begin{aligned}
0\le &\int_{\{d(y)\ge2|x-y|\}}G(x,y)d(y)^{\beta}\,dy\\
&\le c \int_{\{d(y)\ge2|x-y|\}} \frac{d(y)^{\beta}}{|x-y|^{N-2}}\,dy\\
&\le c \int_{\{d(y)\ge2|x-y|\}} \frac{d(x)d(y)^{\beta}}{|x-y|^{N-1}}\,dy\\
&\le c \int_{\{d(y)\ge2|x-y|\}} \frac{d(x)}{|x-y|^{N-1-\beta}}\,dy.
\end{aligned}
$$
Now, since $-1<\beta\le 0$
\begin{equation}\label{eq612}
c \int_{\{d(y)\ge2|x-y|\}} \frac{d(x)}{|x-y|^{N-1-\beta}}\,dy\,\,\, \le\,\,\, c\,d(x)\int_{B_R(x)} \frac{1}{|x-y|^{N-1-\beta}}\,dy \le c\,d(x),
\end{equation}
where $R:=\diam(\Omega)$.
Similarly, due to \eqref{eq65} and \eqref{eq69}
\begin{equation}\label{eq613}
\begin{aligned}
0\le &\int_{\{d(y)\le2|x-y|\}}G(x,y)d(y)^{\beta}\,dy\\
&\le c \int_{\{d(y)\le2|x-y|\}} \frac{d(x)d(y)^{1+\beta}}{|x-y|^{N}}\,dy\\
&\le c \int_{\{d(y)\le2|x-y|\}} \frac{d(x)}{|x-y|^{N-(1+\beta)}}\,dy\\
&\le c\,d(x)\int_{B_R(x)}\frac{1}{|x-y|^{N-(1+\beta)}}\,dy \\
&\le c\,d(x)
\end{aligned}
\end{equation}
Finally, due to \eqref{eq612} and \eqref{eq613}, for any $x\in\Omega$, there exists $c=c(\beta)$ such that
$$
0\le \psi(x)\le c\,d(x).
$$

\end{proof}

\begin{lemma}\label{lemma5}
Suppose that \eqref{eq65} holds. Let us recall the definition of $\psi$ in \eqref{eq66} and suppose that
\begin{equation}\label{eq614}
\beta>-2.
\end{equation}
Then there exist $M>0$ such that
\begin{equation}\label{eq615}
0\le\psi(x)\le M \quad \text{for any}\,\,\,x\in\Omega,
\end{equation}
\end{lemma}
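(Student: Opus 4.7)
Since $d(y)\le\diam(\Omega)$, for $\beta\ge 0$ we trivially have $\psi(x)\le\diam(\Omega)^\beta\int_\Omega G(x,y)\,dy$ and the last integral is uniformly bounded on $\overline\Omega$ (it is the torsion function of $\Omega$). The case $-1<\beta<0$ follows directly from Lemma \ref{lemma4}: $\psi(x)\le c\,d(x)\le c\,\diam(\Omega)$. Moreover, for $\beta_0\le\beta\le 0$ the elementary inequality $d(y)^\beta\le\diam(\Omega)^{\beta-\beta_0}d(y)^{\beta_0}$ reduces the borderline case $\beta=-1$ to any fixed $\beta_0\in(-2,-1)$. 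Hence it is enough to establish the uniform bound for $\beta\in(-2,-1)$.

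Fix such a $\beta$ and set $\gamma:=\beta+2\in(0,1)$. The plan is to compare $\psi$ to a classical bounded supersolution built from the torsion function of $\Omega$. Let $h$ denote the unique solution of $-\Delta h=1$ in $\Omega$ with $h=0$ on $\partial\Omega$. The $C^3$ regularity of $\partial\Omega$ and Schauder theory give $h\in C^2(\overline\Omega)$, Hopf's lemma yields $|\nabla h|\ge c_3>0$ in some tubular neighbourhood $U$ of $\partial\Omega$, and comparison with linear barriers gives the two-sided estimate $c_1\,d(x)\le h(x)\le c_2\,d(x)$ in $\Omega$. Define $w:=A\,h^\gamma$ with $A>0$ to be chosen large; then $w\in C^2(\Omega)\cap C(\overline\Omega)$, $w$ vanishes on $\partial\Omega$, and is bounded by $A(c_2\diam\Omega)^\gamma$.

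A direct computation, using $\Delta h=-1$, gives
\[
-\Delta w=A\gamma(1-\gamma)\,h^{\gamma-2}|\nabla h|^2+A\gamma\,h^{\gamma-1},
\]
both summands being pointwise nonnegative. In $U$, the first term, combined with $h\le c_2 d$ and $|\nabla h|\ge c_3$, is bounded below by $A\gamma(1-\gamma)c_3^2 c_2^{\gamma-2}\,d^\beta$, so $-\Delta w\ge d^\beta$ in $U$ once $A$ is sufficiently large. Outside $U$ we have $d\ge\tau>0$, hence $d^\beta\le\tau^\beta$ is bounded, while $-\Delta w\ge A\gamma\,\|h\|_\infty^{\gamma-1}$ (recall $\gamma-1<0$). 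Enlarging $A$ if necessary, $-\Delta w\ge d^\beta$ throughout $\Omega$.

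To complete the proof, approximate: set $d_\varepsilon:=\max(d,\varepsilon)$ and $\psi_\varepsilon(x):=\int_\Omega G(x,y)\,d_\varepsilon(y)^\beta\,dy$. Since $\beta<0$, one has $d_\varepsilon^\beta\le d^\beta$ and $d_\varepsilon^\beta\le\varepsilon^\beta$ is bounded, so $\psi_\varepsilon\in W^{2,p}(\Omega)\cap C(\overline\Omega)$ solves $-\Delta\psi_\varepsilon=d_\varepsilon^\beta$ in $\Omega$ with $\psi_\varepsilon=0$ on $\partial\Omega$. Then $-\Delta(w-\psi_\varepsilon)\ge 0$ in $\Omega$ and $w-\psi_\varepsilon=0$ on $\partial\Omega$, so the weak maximum principle yields $\psi_\varepsilon\le w$; monotone convergence as $\varepsilon\to 0^+$ then gives $\psi\le w\le M$ pointwise. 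The main technical point is the design of $w$: one must match the singular behaviour of $d^\beta$ at $\partial\Omega$ (which forces the exponent $\gamma=\beta+2$) while simultaneously ensuring a uniform positive lower bound on $-\Delta w$ in the interior; the smoothness and Hopf property of the torsion function supply both ingredients at once.
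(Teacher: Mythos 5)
Your proof is correct, but it takes a genuinely different route from the paper's. The paper also first reduces to $-2<\beta\le-1$ via Lemma \ref{lemma4}, but then exhausts the domain rather than regularising the density: it considers the Green functions $G_\varepsilon$ of the subdomains $\Omega_\varepsilon$, sets $u_\varepsilon(x)=\int_{\Omega_\varepsilon}G_\varepsilon(x,y)d(y)^\beta\,dy$, and compares $u_\varepsilon$ with a bounded positive solution $v\in C^0(\overline\Omega)$ of $-\Delta v=d^\beta$, $v=0$ on $\partial\Omega$, whose existence for $\beta>-2$ is \emph{cited} from \cite{PPT}; the conclusion then follows from the monotone convergence $G_\varepsilon\nearrow G$. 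You instead keep the domain fixed, truncate the density ($d_\varepsilon=\max(d,\varepsilon)$), and --- this is the substantive difference --- replace the citation by an explicit barrier $w=Ah^{\gamma}$ with $h$ the torsion function and $\gamma=\beta+2\in(0,1)$, verified by the direct computation $-\Delta(h^\gamma)=\gamma(1-\gamma)h^{\gamma-2}|\nabla h|^2+\gamma h^{\gamma-1}$ together with Hopf's lemma and the two-sided bound $c_1d\le h\le c_2d$. Your version is self-contained and makes transparent why $\beta>-2$ is the correct threshold (one needs $\gamma>0$, and the restriction $\gamma<1$ is harmless since $\beta\ge-1$ was already dispatched); the price is that you need the Schauder/Hopf regularity of the torsion function, which is available here since $\partial\Omega$ is assumed $C^3$ throughout the section. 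Both arguments close with the same maximum-principle-plus-monotone-convergence step, and both are sound.
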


\begin{proof}
By Lemma \ref{lemma4} we only need to consider the case $-2<\beta\le-1$. For every $\varepsilon>0$ small enough, let $\Omega_\varepsilon$ be defined as in \eqref{eq616}. Moreover let $G_{\varepsilon}(x,y)$ be the Green function associated to the operator $-\Delta$ for $x,y\in \Omega_{\varepsilon}$. For every $\varepsilon>0$, let
\begin{equation}\label{eq617}
u_{\varepsilon}(x):=\int_{\Omega_{\varepsilon}} G_{\varepsilon}(x,y)\,d(y)^{\beta}\,dy.
\end{equation}
Observe that, for every $\varepsilon>0$, $u_{\varepsilon}\in C^{\infty}(\operatorname{Int}(\Omega_{\varepsilon}))\cap C^0(\Omega_{\varepsilon})$, $u_{\varepsilon}>0$ in $\operatorname{Int}(\Omega_{\varepsilon})$ and it solves the following problem
$$
\begin{cases}
-\Delta u_{\varepsilon}(x)=d(x)^{\beta} & \quad \text{in}\,\,\operatorname{Int}(\Omega_{\varepsilon})\\
\quad\quad\quad u_{\varepsilon}=0&\quad \text{on}\,\,\partial\Omega_{\varepsilon}\,.
\end{cases}
$$
Moreover, due to assumption \eqref{eq614}, see \cite{PPT}, there exists $v:\bar{\Omega} \to\mathbb{R}$, $v\in C^0(\bar\Omega)$, $v>0$ in $\Omega$ such that $v$ is a solution to problem
$$
\begin{cases}
-\Delta v(x)=d(x)^{\beta} & \quad \text{in}\,\,\Omega\\
\quad\quad\quad v=0&\quad \text{on}\,\,\partial\Omega\,.
\end{cases}
$$
Observe that, due to the maximum principle, it follows that
\begin{equation}\label{eq618}
0<u_{\varepsilon}<v\quad\text{in}\,\,\operatorname{Int}(\Omega_{\varepsilon})\quad\text{for any}\,\,\varepsilon>0.
\end{equation}
Moreover, for $0<\varepsilon_1<\varepsilon_2$ one has
\begin{equation}\label{eq619}
u_{\varepsilon_2}(x)\le u_{\varepsilon_1}(x)\quad \text{for any}\,\,\, x\in\Omega_{\varepsilon_2}.
\end{equation}
Hence, the family of functions $\{u_{\varepsilon}\}_{\varepsilon>0}$, due to \eqref{eq618} and \eqref{eq619}, admits a finite limit for $\varepsilon\to0$, in particular we write
\begin{equation}\label{eq620}
\lim_{\varepsilon\to0}\,u_{\varepsilon}(x)\,=w(x)\quad\text{for any}\,\,\,x\in\Omega,
\end{equation}
and $0<w(x)\le v(x)$ for any $x\in\Omega$.
Now observe that
$$
G_{\varepsilon}(x,y)\nearrow G(x,y)\quad \text{as}\,\,\,\varepsilon\to 0\,\,\,\text{for any}\,\,x,y\in\Omega\,.
$$
It follows by the Monotone Convergence Theorem that for any $\varepsilon>0$ one has
\begin{equation}\label{eq621}
u_{\varepsilon}(x)=\int_{\Omega}G_{\varepsilon}(x,y)d(y)^{\beta}\,dy\longrightarrow\int_{\Omega}G(x,y)d(y)^{\beta}\,dy\quad \text{as}\,\,\,\varepsilon\to 0.
\end{equation}
Hence, due to \eqref{eq618}, \eqref{eq620} and \eqref{eq621}, for any $x\in\Omega$ we can write
$$
w(x)=\int_{\Omega}G(x,y)d(y)^{\beta}\,dy,\quad\text{and}\quad 0\le \int_{\Omega}G(x,y)d(y)^{\beta}\,dy\le v(x)\,.
$$
Finally, since $v\in C^0(\overline\Omega)$, there exists $M>0$ such that
$$
0\le \int_{\Omega}G(x,y)d(y)^{\beta}\,dy\le M.
$$
\end{proof}
We are now ready to prove Theorem \ref{teorema3}.
\begin{proof}[Proof of Theorem \ref{teorema3}]
We want to construct a subsolution and a supersolution to problem \eqref{problema2} which will be denoted by $\underline u$ and $\overline{u}$ respectively. We  set
$$
\underline u\equiv 0.
$$
On the other hand, in order to construct $\overline u$, let us define, for any $\lambda>0$
\begin{equation}\label{eq622}
S_{\lambda}=\{v\in C^0(\bar{\Omega})\,:\, 0\le v(x)\le \lambda \,d(x),\,\,\forall x\in\Omega\}.
\end{equation}
with $d(x)$ as in \eqref{eq20}. Moreover we define the map $T:S_{\lambda}\to S_{\lambda}$
\begin{equation}\label{eq623}
Tv(x)=\lambda^q\int_{\Omega}G(x,y)\,dy+\int_{\Omega}G(x,y)V(y)v(y)^q\,dy.
\end{equation}
We prove that $T$ is well defined and that it is a contraction map for $\lambda>0$ small enough. Observe that,
by to Lemma \ref{lemma4} with $\beta=0$, one has for some $c_1>0$
\begin{equation}\label{eq624}
0\le \lambda^q\int_{\Omega}G(x,y)\,dy\le c_1\,\lambda^qd(x),\quad \text{for every}\,\,x\in\Omega.
\end{equation}
Similarly, due to \eqref{eq215}, Lemma \ref{lemma4} with $\beta=-\sigma_1+q$ and \eqref{eq216}, for some $c_2>0$\begin{equation}\label{eq625}
0\le \int_{\Omega}G(x,y)V(y)v(y)^q\,dy\le c\lambda^q\int_{\Omega}G(x,y)d(y)^{-\sigma_1+q}\,dy\le c_2\,\lambda^q\,d(x).
\end{equation}
By using \eqref{eq624} and \eqref{eq625}, \eqref{eq623} yields for some $C>0$ and $\lambda>0$ small enough
$$
0\le Tv(x)\le C\lambda^q\,d(x)\le\,\lambda\,d(x)\quad \text{for any}\,\,\,x\in\Omega.
$$
Hence, for a sufficiently small $\lambda>0$, the function $Tv:\bar{\Omega}\to\R$ is continuous and thus the map $T:S_{\lambda}\to S_{\lambda}$ is well defined. Let us now show that $T$ is a contraction map, for $\lambda>0$ small enough. Fix $w,v\in S_{\lambda}$, then for any $x\in\Omega$
$$
\begin{aligned}
|Tw(x)-Tv(x)|&\le \int_{\Omega}G(x,y)V(y)|w^q(y)-v^q(y)|\,dy \\
& \le\int_{\Omega}G(x,y)V(y)q\xi(y)^{q-1}|w(y)-v(y)|\,dy,
\end{aligned}
$$
for some $\xi(y)$ between $w(y)$ and $v(y)$. Then $0\leq\xi(y)\leq\lambda d(y)$ and hence, due to Lemma \ref{lemma5} with $\beta=-\sigma_1+q-1$ and \eqref{eq216},
$$
\begin{aligned}
|Tw(x)-Tv(x)|
&\le C\,\left(\int_{\Omega}G(x,y)d(y)^{-\sigma_1+q-1}\,dy\right)\,\lambda^{q-1}\|w-v\|_{L^{\infty}(\Omega)}\\
&\le C\, M\,\lambda^{q-1}\|w-v\|_{L^{\infty}(\Omega)}.
\end{aligned}
$$
Thus we have, for $\lambda>0$ small enough,
$$
\|Tw-Tv\|_{L^{\infty}(\Omega)}\le\frac 12\|w-v\|_{L^{\infty}(\Omega)},
$$
hence $T$ is a contraction map. Therefore, there exists $\varphi\in S_{\lambda}$ such that $\varphi=T\varphi$. In particular, we have
\begin{itemize}
\item[(i)] $0\le \varphi(x)\le \lambda\,d(x)\quad \text{for any}\,\,\, x\in\bar{\Omega}$;
\item[(ii)]$\varphi$ is a solution of
$$
\begin{cases}
-\Delta\varphi=\lambda^q+V\varphi^q&\quad \text{in}\,\,\Omega \\
\varphi=0 &\quad\text{on}\,\,\partial\Omega
\end{cases}
$$
\item[(iii)] $\varphi>0$ in $\Omega$.
\end{itemize}
We now set $\overline{u}(x,t)=\varphi(x)$ and show that $\overline{u}$ is a supersolution to problem \eqref{problema2}. Observe that
\begin{itemize}
\item[(i)] $\partial_t \overline{u}-\Delta\overline{u}=-\Delta\varphi=\lambda^q+V\,\varphi^q\ge V\,\overline{u}^q\quad$ in $\Omega\times (0,+\infty)$;
\item[(ii)] $\overline{u}(x,t)=\varphi(x)=0\quad $ for any $x\in\partial\Omega$, $t\in(0,+\infty)$;
\item[(iii)] $\overline{u}>0$ in $\Omega\times(0,+\infty)$;
\item[(iv)] $0\le u_0(x)\le \overline{u}(x,0)\,\,$ for any $x\in\Omega$, if $\varepsilon$ is small enough; indeed we can apply the Hopf's Lemma and if $n$ denotes the inward normal unit vector to $\partial\Omega$ deduce that
$$
\frac{\partial\varphi}{\partial n}(x)>0,\quad \text{for any}\,\,x\in\partial\Omega.
$$
Then, due to the compactness of $\bar{\Omega}$ and the continuity of $\varphi$ in $\Omega$, we observe that there exists $\alpha>0$ such that
$$
\varphi(x)\ge\alpha\,d(x)\quad \text{for any}\,\,x\in\bar{\Omega}.
$$
Now, if $\varepsilon>0$ in \eqref{eq214} is sufficiently small, we have that
$$
0\le u_0(x)\le\varepsilon \,d(x)\le\alpha\,d(x)\le \varphi(x)=\overline{u}(x,0)\quad\text{for any}\,\,x\in\bar{\Omega}.
$$
\end{itemize}
Thus $\overline{u}:\bar{\Omega}\times[0,+\infty)\to\R$ is a supersolution to problem \eqref{problema2}, such that $\overline{u}\geq \underline{u}$ in $\bar{\Omega}\times[0,+\infty)$. Finally, we conclude that there exists a solution $u:\Omega\times[0,+\infty)\to\R$ of problem \eqref{problema2} such that
$$
0\le u(x)\le \overline u(x)\quad \text{for any}\,\,x\in\bar{\Omega}.
$$
\end{proof}

\section{Proof of Theorem \ref{teorema4} and of Corollary \ref{corollario4}}\label{prooflin2}

We introduce some auxiliary Lemmas that are needed in the proof of Theorem \ref{teorema4}.
\begin{lemma}\label{lemma71}
Let $V\in L^1_{\textrm{loc}}(\Omega)$, with $V(x)>0$ a.e., and assume that the initial condition satisfies $u_0\in L^1_{loc}(\Omega)$, with $u_0\ge 0$ a.e.
Let $u\ge 0$ be a weak solution of problem \eqref{problema2}. If $\alpha>\frac{2q}{q-1}$ and $\psi\in C^{2,1}_{x,t}(\Omega\times[0,T))$, $\psi\ge 0$ a.e. in $\Omega\times[0,T)$ with compact support in $\Omega\times[0,T)$, then
\begin{equation}\label{eq71}
\begin{aligned}
\int_0^T\int_{\Omega}u^qV\psi^{\alpha}\,dx dt&\le 2^{\frac{1}{q-1}}\left\{\int_0^T\int_{\Omega}V^{-\frac{1}{q-1}}\psi^{\alpha-\frac{2q}{q-1}}\left|\alpha(\alpha-1)|\nabla\psi|^2+\alpha\psi\Delta\psi\right|^{\frac{q}{q-1}}\,dxdt\right.\\
&\left.+\int_0^T\int_{\Omega}V^{-\frac{1}{q-1}}\psi^{\alpha-\frac{2q}{q-1}}\left|\alpha\psi\psi_t\right|^{\frac{q}{q-1}}\,dxdt\right\}.
\end{aligned}
\end{equation}
\end{lemma}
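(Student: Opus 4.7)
The idea is to test the weak formulation of \eqref{problema2} against $\varphi=\psi^\alpha$, integrate by parts in space to produce $\Delta\varphi$, and then apply H\"older's inequality to each of the two resulting integrals, closing the argument with an elementary convexity inequality to obtain the stated constant $2^{1/(q-1)}$.

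\medskip

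\emph{Step 1 (admissibility and weak formulation).} Since $\alpha>2q/(q-1)>2$ and $\psi\in C^{2,1}_{x,t}(\Omega\times[0,T))$ is nonnegative and compactly supported, $\varphi:=\psi^\alpha$ is also nonnegative, $C^{2,1}_{x,t}$, compactly supported in $\Omega\times[0,T)$ and in particular Lipschitz, hence admissible in Definition \ref{def}. Plugging $p=2$ into \eqref{eq31}, integrating by parts in $x$ (the spatial boundary term vanishes because $\mathrm{supp}\,\varphi$ is a compact subset of $\Omega\times[0,\infty)$), and discarding the nonnegative term $\int_\Omega u_0\varphi(x,0)\,dx$, I obtain
\[
\int_0^T\!\!\int_\Omega Vu^q\varphi\,dx dt \leq -\int_0^T\!\!\int_\Omega u\bigl(\Delta\varphi+\partial_t\varphi\bigr)\,dx dt \leq \int_0^T\!\!\int_\Omega u|\Delta\varphi|\,dx dt+\int_0^T\!\!\int_\Omega u|\partial_t\varphi|\,dx dt,
\]
using $u\ge 0$ in the last step.

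\medskip

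\emph{Step 2 (H\"older on each piece).} Computing explicitly, $\Delta\varphi=\psi^{\alpha-2}\bigl[\alpha(\alpha-1)|\nabla\psi|^2+\alpha\psi\Delta\psi\bigr]$ and $\partial_t\varphi=\alpha\psi^{\alpha-1}\psi_t$. The hypothesis $\alpha>2q/(q-1)$ is exactly what makes the weight $\psi^{\alpha-2q/(q-1)}$ nonnegative as a power of $\psi$, hence well-defined on all of $\Omega$. Writing $u=(uV^{1/q}\psi^{\alpha/q})\cdot(V^{-1/q}\psi^{-\alpha/q})$ and applying H\"older's inequality with conjugate exponents $q$ and $q/(q-1)$ to each of the two integrals above, I factor out $X:=\int_0^T\!\int_\Omega u^qV\psi^\alpha\,dx dt$ and arrive at
\[
X \leq X^{1/q}\Bigl(Y_A^{(q-1)/q}+Y_B^{(q-1)/q}\Bigr),
\]
where $Y_A$ is the first and $Y_B$ the second of the two integrals on the right-hand side of \eqref{eq71}, both carrying the common weight $V^{-1/(q-1)}\psi^{\alpha-2q/(q-1)}$.

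\medskip

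\emph{Step 3 (closing the argument).} Since $u\in L^q_{\mathrm{loc}}(\Omega\times[0,\infty);V\,dx dt)$ by Definition \ref{def} and $\psi$ has compact support in $\Omega\times[0,T)$, one has $X<\infty$, so I may divide by $X^{1/q}$ to get $X^{(q-1)/q}\leq Y_A^{(q-1)/q}+Y_B^{(q-1)/q}$. Raising to the power $q/(q-1)>1$ and applying the elementary inequality $(a+b)^r\leq 2^{r-1}(a^r+b^r)$ with $r=q/(q-1)$ produces precisely the constant $2^{q/(q-1)-1}=2^{1/(q-1)}$, yielding \eqref{eq71}.

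\medskip

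The main technical point to verify is Step 1: the justification of the spatial integration by parts (using $u\in W^{1,2}_{\mathrm{loc}}$ against $\varphi\in C^{2,1}$ with compact support inside $\Omega$) and the passage from $-\int u(\Delta\varphi+\partial_t\varphi)$ to the sum of absolute values, which relies only on $u\ge 0$. Everything else is careful bookkeeping of the exponents on $\psi$ and $V$.
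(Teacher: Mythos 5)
Your proof is correct and follows essentially the same route as the paper's: test the weak formulation with $\psi^\alpha$, integrate by parts in space, and use the conjugate exponents $q$ and $\frac{q}{q-1}$ together with the elementary inequality $(a+b)^{\frac{q}{q-1}}\le 2^{\frac{1}{q-1}}\bigl(a^{\frac{q}{q-1}}+b^{\frac{q}{q-1}}\bigr)$. The only cosmetic difference is that the paper applies Young's inequality pointwise and absorbs the term $\frac{1}{q}\int u^qV\psi^\alpha$ into the left-hand side, whereas you apply H\"older's inequality and divide by $X^{1/q}$ (legitimate, as you note, since $X<\infty$ by Definition \ref{def} and the compact support of $\psi$); both yield the same constant.
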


\begin{proof}
Using the definition of weak solution to problem \eqref{problema2} and Young inequality with coefficients $q$ and $\frac{q}{q-1}$ we have
$$
\begin{aligned}
\int_0^T\int_{\Omega}u^qV\psi^{\alpha}\,dx dt&\le\int_0^T\int_{\Omega}u\left|(\psi^{\alpha})_t+\Delta(\psi^{\alpha})\right|\,dx dt-\int_{\Omega}u_0(x)\psi^{\alpha}(x,0)\,dx\\
&\le\frac{1}{q}\int_0^T\int_{\Omega}u^qV\psi^{\alpha}\,dx dt+\frac{q-1}{q}\int_0^T\int_{\Omega}(V\psi^{\alpha})^{-\frac{1}{q-1}}\left|(\psi^{\alpha})_t+\Delta(\psi^{\alpha})\right|^{\frac{q}{q-1}}\,dx dt
\end{aligned}
$$
Reordering terms we get
$$
\begin{aligned}
\int_0^T\int_{\Omega}u^qV\psi^{\alpha}\,dx dt&\le\int_0^T\int_{\Omega}(V\psi^{\alpha})^{-\frac{1}{q-1}}\left|\alpha\psi^{\alpha-1}\psi_t+\alpha(\alpha-1)\psi^{\alpha-2}|\nabla\psi|^2+\alpha\psi^{\alpha-1}\Delta\psi\right|^{\frac{q}{q-1}}\,dx dt\\
&\le\int_0^T\int_{\Omega}V^{-\frac{1}{q-1}}\psi^{-\frac{\alpha}{q-1}+\frac{q(\alpha-2)}{q-1}}\left|\alpha\psi\psi_t+\alpha(\alpha-1)|\nabla\psi|^2+\alpha\psi\Delta\psi\right|^{\frac{q}{q-1}}\,dx dt\\
&\le2^{\frac{1}{q-1}}\left\{\int_0^T\int_{\Omega}V^{-\frac{1}{q-1}}\psi^{\alpha-\frac{2q}{q-1}}\left|\alpha(\alpha-1)|\nabla\psi|^2+\alpha\psi\Delta\psi\right|^{\frac{q}{q-1}}\,dx dt\right.\\&\left.+\int_0^T\int_{\Omega}V^{-\frac{1}{q-1}}\psi^{\alpha-\frac{2q}{q-1}}\left|\alpha\psi\psi_t\right|^{\frac{q}{q-1}}\,dx dt\right\}
\end{aligned}
$$
This proves the thesis.
\end{proof}

\begin{lemma}\label{lemma72}
Let the assumptions of Lemma \ref{lemma71} hold. Moreover let $K\subset\Omega\times[0,T)$ be a compact set and let $\psi$ be such that $\psi\equiv 1$ in $K$. Let $S_k:= (\Omega\times[0,T))\setminus K$ then
\begin{equation}\label{eq72}
\begin{aligned}
\int_0^T\int_{\Omega}u^qV\psi^{\alpha}\,dx dt&\le 2^{\frac{1}{q}}\left(\int\int_{S_k}u^qV\psi^{\alpha}\,dxdt\right)^{\frac 1q}\\&\times \left\{\left[\int\int_{S_k}V^{-\frac{1}{q-1}}\psi^{\alpha-\frac{2q}{q-1}}\left|\alpha(\alpha-1)|\nabla\psi|^2+\alpha\psi\Delta\psi\right|^{\frac{q}{q-1}}\,dxdt\right]^{\frac{q-1}{q}}\right.\\&\left.+\left[\int\int_{S_k}V^{-\frac{1}{q-1}}\psi^{\alpha-\frac{2q}{q-1}}\left|\alpha\psi\psi_t\right|^{\frac{q}{q-1}}\,dxdt\right]^{\frac{q-1}{q}}\right\}.
\end{aligned}
\end{equation}
\end{lemma}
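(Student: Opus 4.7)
The plan is to follow the same scheme as Lemma \ref{lemma71}, but replace the use of Young's inequality (which absorbs the $u^q V \psi^\alpha$ term into the left-hand side) by an application of H\"older's inequality with exponents $q$ and $q/(q-1)$, which instead isolates $\bigl(\iint_{S_k} u^q V \psi^\alpha\bigr)^{1/q}$ as a free factor on the right. This is possible precisely because $\psi \equiv 1$ on $K$, so $\psi_t$, $\nabla \psi$, $\Delta \psi$ all vanish identically on $K$.

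First I would insert $\psi^\alpha$ as the test function into \eqref{eq31} (admissible since $\psi\in C^{2,1}$ with compact support, and $\alpha>2q/(q-1)>1$), drop the initial datum term using $u_0\ge 0$ and $\psi\ge 0$, and observe that the derivatives of $\psi^\alpha$ are supported in $S_k$. This gives
\begin{equation*}
\int_0^T\!\!\int_\Omega u^q V\psi^\alpha\,dx\,dt \;\le\; \iint_{S_k} u\,\bigl|(\psi^\alpha)_t + \Delta(\psi^\alpha)\bigr|\,dx\,dt.
\end{equation*}
Writing the integrand as $\bigl(u\,V^{1/q}\psi^{\alpha/q}\bigr)\cdot\bigl(V^{-1/q}\psi^{-\alpha/q}|(\psi^\alpha)_t+\Delta(\psi^\alpha)|\bigr)$ and applying H\"older with exponents $q,\,q/(q-1)$ yields
\begin{equation*}
\int_0^T\!\!\int_\Omega u^q V\psi^\alpha \le \Bigl(\iint_{S_k} u^q V\psi^\alpha\Bigr)^{\frac{1}{q}}\Bigl(\iint_{S_k} V^{-\frac{1}{q-1}}\psi^{-\frac{\alpha}{q-1}}\bigl|(\psi^\alpha)_t+\Delta(\psi^\alpha)\bigr|^{\frac{q}{q-1}}\,dx\,dt\Bigr)^{\frac{q-1}{q}}.
\end{equation*}

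Next I would expand $(\psi^\alpha)_t = \alpha\psi^{\alpha-1}\psi_t$ and $\Delta(\psi^\alpha) = \alpha\psi^{\alpha-2}\bigl[\psi\Delta\psi+(\alpha-1)|\nabla\psi|^2\bigr]$, apply the elementary inequality $|A+B|^{q/(q-1)}\le 2^{1/(q-1)}\bigl(|A|^{q/(q-1)}+|B|^{q/(q-1)}\bigr)$ to split the time and spatial contributions, and bookkeep the $\psi$-exponents: for the spatial term, $\psi^{-\alpha/(q-1)}\cdot\psi^{(\alpha-2)q/(q-1)} = \psi^{\alpha-2q/(q-1)}$, while for the time term the same exponent appears once one writes $\alpha\psi^{\alpha-1}\psi_t = \psi^{-1}\cdot\alpha\psi\psi_t$ and raises to power $q/(q-1)$. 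Finally, using the concavity bound $(a+b)^{(q-1)/q} \le a^{(q-1)/q} + b^{(q-1)/q}$ for $a,b\ge 0$ and combining the constants via $(2^{1/(q-1)})^{(q-1)/q}=2^{1/q}$ yields exactly \eqref{eq72}.

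The only delicate bookkeeping is checking that the exponent $\alpha - 2q/(q-1)$ of $\psi$ is nonnegative, which follows from the standing hypothesis $\alpha>2q/(q-1)$ inherited from Lemma \ref{lemma71}; everything else is a straightforward rearrangement of the computation used there, with H\"older replacing Young.
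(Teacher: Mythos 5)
Your proposal is correct and follows essentially the same route as the paper: insert $\psi^\alpha$ into the weak formulation, drop the nonnegative initial-datum term, apply H\"older with exponents $q$ and $q/(q-1)$ restricted to $S_k$ (where the derivatives of $\psi^\alpha$ live), expand $(\psi^\alpha)_t+\Delta(\psi^\alpha)$ to collect the exponent $\psi^{\alpha-\frac{2q}{q-1}}$, and split the two contributions with the elementary inequality, yielding the constant $2^{1/q}$. No gaps.
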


\begin{proof}
Similarly to the proof of Lemma \ref{lemma71}, using the definition of weak solution of problem \eqref{problema2} and H\"older inequality with coefficients $q$ and $\frac{q}{q-1}$ we get
$$
\begin{aligned}
\int_0^T\int_{\Omega}u^qV\psi^{\alpha}\,dx dt&\le \left(\int\int_{S_K}u^qV\psi^{\alpha}\,dx dt\right)^{\frac 1q}\left(\int\int_{S_K}V^{-\frac{1}{q-1}}\psi^{-\frac{\alpha}{q-1}}\left|(\psi^{\alpha})_t+\Delta(\psi^{\alpha})\right|^{\frac{q}{q-1}}\,dx dt\right)^{\frac{q-1}{q}} \\
&= \left(\int\int_{S_k}u^qV\psi^{\alpha}\,dx dt\right)^{\frac 1q} \\
&\times\left(\int\int_{S_k}V^{-\frac{1}{q-1}}\psi^{\alpha-\frac{2q}{q-1}}\left|\alpha\psi\psi_t+\alpha(\alpha-1)|\nabla\psi|^2+\alpha\psi\Delta\psi\right|^{\frac{q}{q-1}}\,dx dt\right)^{\frac{q-1}{q}}\\
&\le2^{\frac{1}{q}} \left(\int\int_{S_k}u^qV\psi^{\alpha}\,dx dt\right)^{\frac 1q}\\
&\times\left\{\left[\int\int_{S_k}V^{-\frac{1}{q-1}}\psi^{\alpha-\frac{2q}{q-1}}\left|\alpha(\alpha-1)|\nabla\psi|^2+\alpha\psi\Delta\psi\right|^{\frac{q}{q-1}}\,dx dt\right]^{\frac{q-1}{q}}\right.\\&\left.+\left[\int\int_{S_k}V^{-\frac{1}{q-1}}\psi^{\alpha-\frac{2q}{q-1}}\left|\alpha\psi\psi_t\right|^{\frac{q}{q-1}}\,dx dt\right]^{\frac{q-1}{q}}\right\}
\end{aligned}
$$
This proves the thesis.
\end{proof}

We now need to introduce the so called \textit{Whitney distance} $\delta : \Omega \to \R^+$, which is a function in $C^{\infty}(\Omega)$, regardless of the regularity of $\partial\Omega$, such that for all $x\in\Omega$
\begin{equation}\label{eq74b}
\begin{aligned}
&c_0^{-1}\,d(x)\le \delta(x) \le c_0\,\,d(x)\,,\\
&|\nabla\delta(x)|\le c_0\,,\\
&|\Delta\delta(x)|\le c_0\,\delta^{-1}(x)\,,
\end{aligned}
\end{equation}
where $d(x)$ has been defined in \eqref{eq20} and $c_0>0$ is a constant. These properties of the Whitney distance can be found, e.g., in \cite{Ba,Stein}.

\begin{lemma}\label{lemma73}
Let $V\in L^1_{loc}(\Omega\times[0,\infty))$, $V>0$ a.e., and $u_0\in L^1_{loc}(\Omega)$, $u_0\ge 0$ a.e. Assume that there exists a nonincreasing function $f:(0,\varepsilon_0)\rightarrow[1,\infty)$ such that $\lim_{\varepsilon\rightarrow0^+}f(\varepsilon)=+\infty$ and such that
for every $\varepsilon>0$ small enough conditions \eqref{eq218} hold. Let $u\ge 0$ be a weak solution of problem \eqref{problema2}, then
\begin{equation}\label{eq77}
\int_0^{+\infty}\int_{\Omega}u^qV\,dx dt<+\infty
\end{equation}
\end{lemma}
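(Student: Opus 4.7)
The plan is to apply Lemma \ref{lemma72} to a space–time cutoff $\psi_\varepsilon$ tailored precisely to the two integrability conditions \eqref{eq218}, obtain a self-bounding inequality for $\int\!\!\int u^q V\psi_\varepsilon^\alpha\,dxdt$, and conclude by Fatou's lemma as $\varepsilon\to 0^+$.

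First I fix an exponent $\alpha>\frac{2q}{q-1}$ and smooth cutoffs $\phi,\eta\in C^\infty([0,\infty);[0,1])$ with $\phi\equiv 0$ on $[0,1/2]$, $\phi\equiv 1$ on $[1,\infty)$, $\eta\equiv 1$ on $[0,1/2]$, $\eta\equiv 0$ on $[1,\infty)$. Using the Whitney distance $\delta$ satisfying \eqref{eq74b}, I define
$$\psi_\varepsilon(x,t):=\phi\!\left(\frac{\delta(x)}{\varepsilon}\right)\eta\!\left(\frac{t}{f(\varepsilon)}\right),$$
which is $C^{2,1}_{x,t}$ with compact support in $\Omega\times[0,f(\varepsilon))$ and identically $1$ on the compact set $K_\varepsilon:=\{(x,t):\delta(x)\ge\varepsilon,\ 0\le t\le f(\varepsilon)/2\}$. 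The Whitney bounds in \eqref{eq74b} yield $|\nabla\psi_\varepsilon|\le C/\varepsilon$ and $|\Delta\psi_\varepsilon|\le C/\varepsilon^2$ on the spatial annulus $A_\varepsilon:=\{\varepsilon/2\le\delta(x)\le\varepsilon\}$, whereas $\partial_t\psi_\varepsilon$ is supported in $\{f(\varepsilon)/2\le t\le f(\varepsilon)\}$ with $|\partial_t\psi_\varepsilon|\le C/f(\varepsilon)$; elsewhere the corresponding derivative vanishes.

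Plugging these estimates into Lemma \ref{lemma72} applied with $\psi=\psi_\varepsilon$ and $K=K_\varepsilon$, the integrand $|\alpha(\alpha-1)|\nabla\psi_\varepsilon|^2+\alpha\psi_\varepsilon\Delta\psi_\varepsilon|^{q/(q-1)}$ is pointwise bounded by $C\varepsilon^{-2q/(q-1)}\chi_{A_\varepsilon\times[0,f(\varepsilon)]}$, and $|\alpha\psi_\varepsilon\partial_t\psi_\varepsilon|^{q/(q-1)}$ by $Cf(\varepsilon)^{-q/(q-1)}\chi_{\{\delta(x)\ge\varepsilon/2\}\times[f(\varepsilon)/2,f(\varepsilon)]}$. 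Since $A_\varepsilon\subset\{x: \varepsilon/(2c_0)\le d(x)\le c_0\varepsilon\}$, this last set decomposes into finitely many $d$-annuli $\Omega_{\alpha/2}\setminus\Omega_\alpha$ with $\alpha\asymp\varepsilon$; the first line of \eqref{eq218} together with the monotonicity of $f$ therefore gives
$$\int\!\!\int V^{-\tfrac{1}{q-1}}\big|\alpha(\alpha-1)|\nabla\psi_\varepsilon|^2+\alpha\psi_\varepsilon\Delta\psi_\varepsilon\big|^{\tfrac{q}{q-1}}\,dxdt\ \le\ C',$$
while the second line of \eqref{eq218} yields the analogous bound for the temporal term, with $C'$ independent of $\varepsilon$.

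Lemma \ref{lemma72} then produces
$$\int_0^{\infty}\!\!\int_\Omega u^q V\psi_\varepsilon^\alpha\,dxdt\ \le\ C''\left(\int_0^{\infty}\!\!\int_\Omega u^q V\psi_\varepsilon^\alpha\,dxdt\right)^{1/q},$$
uniformly in $\varepsilon$; since $q>1$, the self-absorption argument gives $\int\!\!\int u^q V\psi_\varepsilon^\alpha\le(C'')^{q/(q-1)}$ for every small $\varepsilon>0$. Finally, because $\delta(x)>0$ for every $x\in\Omega$ and $f(\varepsilon)\to+\infty$ as $\varepsilon\to 0^+$, one has $\psi_\varepsilon(x,t)\nearrow 1$ for each fixed $(x,t)\in\Omega\times[0,\infty)$, so Fatou's lemma delivers \eqref{eq77}. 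The main technical point is matching the Whitney annulus $A_\varepsilon$ with the $d$-annulus $\Omega_{\varepsilon/2}\setminus\Omega_\varepsilon$ that appears in \eqref{eq218}; this is where the nonincreasing character of $f$ becomes essential, allowing the few instances of \eqref{eq218} needed to cover $A_\varepsilon$ to share a common time horizon.
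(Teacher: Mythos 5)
Your proof is essentially the paper's: the same test function $\psi_\varepsilon=\phi_\varepsilon(x)\eta(t)$ built from the Whitney distance, the same derivative bounds $|\nabla\psi_\varepsilon|\le C/\varepsilon$, $|\Delta\psi_\varepsilon|\le C/\varepsilon^2$, $|\partial_t\psi_\varepsilon|\le C/f(\varepsilon)$, and the same covering of the Whitney annulus $\{\varepsilon/(2c_0)\le d\le c_0\varepsilon\}$ by finitely many dyadic $d$-annuli to which \eqref{eq218} is applied. The one point where you diverge is the choice of a priori estimate: the paper applies Lemma \ref{lemma71} (the Young-inequality version), whose right-hand side contains no $u$ at all, so the uniform bound $\int_0^\infty\int_\Omega u^qV\psi_\varepsilon^\alpha\,dxdt\le C$ is immediate; you instead apply Lemma \ref{lemma72} (the H\"older version) and close the resulting inequality $X\le C''X^{1/q}$ by self-absorption. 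That absorption is legitimate only because $X=\int_0^\infty\int_\Omega u^qV\psi_\varepsilon^\alpha\,dxdt$ is finite a priori --- which it is, since $\psi_\varepsilon$ is compactly supported in $\Omega\times[0,\infty)$ and Definition \ref{def} requires $u\in L^q_{loc}(\Omega\times[0,\infty),V\,dxdt)$ --- but you should state this explicitly, as the inequality $X\le C''X^{1/q}$ is vacuous when $X=+\infty$. The paper's use of Lemma \ref{lemma71} sidesteps this issue entirely, and reserves Lemma \ref{lemma72} for the proof of Theorem \ref{teorema4}, where the extra factor $\bigl(\int\int_{S_{K_\varepsilon}}u^qV\,dxdt\bigr)^{1/q}$ --- which tends to $0$ once \eqref{eq77} is established --- is what forces the integral to vanish. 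Your closing remark about matching the Whitney annulus to the $d$-annuli of \eqref{eq218}, and the role of the monotonicity of $f$ in keeping a common time horizon, is exactly the point handled (somewhat tersely) in the paper's estimate of $I_1$ with $N=[2\log_2 c_0]+1$ annuli; no objection there.
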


\begin{proof}
For every $\varepsilon>0$ small enough, we consider a smooth function $g_{\varepsilon}:[0,\infty)\to\R$ such that $0\le g_{\varepsilon}\le 1$, $g_{\varepsilon}\equiv 1$ in $[\varepsilon,+\infty)$, $\operatorname{supp}g_{\varepsilon}\subset [\frac{\varepsilon}{2},+\infty)$, $0\leq g_{\varepsilon}'\le \frac{C}{\varepsilon}$ and $|g_{\varepsilon}''|\le\frac{C}{\varepsilon^2}$ for some constant $C>0$. We also introduce $\eta$ a smooth function such that $0\leq\eta\leq1$, $\eta\equiv 1$ in $[0,\frac{1}{2}f(\varepsilon)]$, $\operatorname{supp}\eta\subset[0,f(\varepsilon))$ and $-\frac{C}{f(\varepsilon)}\leq\eta'\leq0$. Now let
\begin{equation}\label{eq78}
\psi_{\varepsilon}(x,t):=\phi_{\varepsilon}(x)\,\eta(t),
\end{equation}
where
\begin{equation}\label{eq79}
\phi_{\varepsilon}(x):=g_{\varepsilon}(\delta(x))
\end{equation}
where $\delta$ is the Whitney distance introduced in \eqref{eq74b}. Observe that, due to \eqref{eq78}, \eqref{eq79} and \eqref{eq74b} for every $x\in\Omega$, $t\in[0,T)$ we have
\begin{equation}\label{eq710b}
\begin{aligned}
&|\nabla\psi_{\varepsilon}(x,t)|\,=\,|g'_{\varepsilon}(\delta(x))\eta(t)\nabla\delta(x)|\,\le\,\frac{C}{\varepsilon}, \\
&|\Delta\psi_{\varepsilon}(x,t)|\,=\,|g''_{\varepsilon}(\delta(x))\eta(t)|\nabla\delta(x)|^2+g'_{\varepsilon}(\delta(x))\eta(t)\Delta\delta(x)|\,\le\,\frac{C}{\varepsilon^2},
\end{aligned}
\end{equation}
for some constant $C>0$. Hence for every $x\in\Omega$, $t\in[0,T)$ we have
\begin{align}
\left|(\psi_{\varepsilon})_t\right|\,\le\,\frac{C}{f(\varepsilon)},&\qquad\qquad\left|\alpha(\alpha-1)|\nabla\psi_{\varepsilon}|^2+\alpha\psi_{\varepsilon}\Delta\psi_{\varepsilon}\right|^{\frac{q}{q-1}}\,\le\,\frac{C}{\varepsilon^{\frac{2q}{q-1}}}.  \label{eq711}
\end{align}
Let $\tilde\Omega_\varepsilon=\{x\in\Omega\,|\,\delta(x)\ge\varepsilon\}$ and note that by \eqref{eq74b} for every $r>0$ we have $$\tilde{\Omega}_r\subset\Omega_\frac{r}{c_0},\qquad\qquad\Omega_r\subset\tilde{\Omega}_\frac{r}{c_0}.$$
We now observe, applying Lemma \ref{lemma71} with the test function $\psi_{\varepsilon}$ defined in \eqref{eq78}, that
\begin{equation}\label{eq713}
\begin{aligned}
\int_0^{\frac{1}{2}f(\varepsilon)}\int_{\tilde{\Omega}_{\varepsilon}}u^q\,V\,dx dt&\le\,\,\int_0^{+\infty}\int_\Omega u^q\,\psi_{\varepsilon}^{\alpha}V\,dx dt \\
&\le\, C\left\{\int_0^{+\infty}\int_\Omega V^{-\frac{1}{q-1}}\,\psi_{\varepsilon}^{\alpha-\frac{2q}{q-1}}\left|\alpha(\alpha-1)|\nabla\psi_{\varepsilon}|^2+\alpha\psi_{\varepsilon}\Delta\psi_{\varepsilon}\right|^{\frac{q}{q-1}}\,dx dt\right. \\
&\left. +\int_0^{+\infty}\int_\Omega V^{-\frac{1}{q-1}}\,\psi_{\varepsilon}^{\alpha-\frac{2q}{q-1}}\left|\alpha\psi_{\varepsilon}(\psi_{\varepsilon})_t\right|^{\frac{q}{q-1}}\,dx dt\right\}\\
&=:C(I_1+I_2).
\end{aligned}
\end{equation}
Now, due to the definition of $\psi_{\varepsilon}$ in \eqref{eq78} and by \eqref{eq218} and \eqref{eq711}, for every small enough $\varepsilon>0$ we have
\begin{equation}\label{eq714}
\begin{aligned}
I_1&\le \int_{0}^{f(\varepsilon)}\int_{\tilde{\Omega}_{\frac{\varepsilon}{2}}\setminus\tilde{\Omega}_{\varepsilon}}V^{-\frac{1}{q-1}}\left[\left|\alpha(\alpha-1)|\nabla\psi_{\varepsilon}|^2+\alpha\psi_{\varepsilon}\Delta\psi_{\varepsilon}\right|\right]^{\frac{q}{q-1}}\,dx dt\\
&\le\frac{C}{\varepsilon^{\frac{2q}{q-1}}}\, \int_{0}^{f(\varepsilon)}\int_{\Omega_{\frac{\varepsilon}{2c_0}}\setminus\Omega_{c_0\varepsilon}}V^{-\frac{1}{q-1}}\,dx dt\\
&\le\frac{C}{\varepsilon^{\frac{2q}{q-1}}}\,\sum_{k=0}^{N} \int_{0}^{f(\varepsilon)}\int_{\Omega_{\frac{2^{k-1}\varepsilon}{c_0}}\setminus\Omega_{\frac{2^{k}\varepsilon}{c_0}}}V^{-\frac{1}{q-1}}\,dx dt\\
&\le\frac{C}{\varepsilon^{\frac{2q}{q-1}}}\,\sum_{k=0}^{N} \left(\frac{2^{k}\varepsilon}{c_0}\right)^\frac{2q}{q-1}\,\,\leq\,\,C,
\end{aligned}
\end{equation}
where we set $N=[2\log_2c_0]+1$. Similarly, due to \eqref{eq78} and by \eqref{eq218} and \eqref{eq711}, we have
\begin{equation}\label{eq715}
\begin{aligned}
I_2&\le \frac{C}{(f(\varepsilon))^\frac{q}{q-1}}\int_{\frac{1}{2}f(\varepsilon)}^{f(\varepsilon)}\int_{\Omega_{\frac{\varepsilon}{2}}}V^{-\frac{1}{q-1}}\,dx dt\,\le\,C.
\end{aligned}
\end{equation}
By substituting \eqref{eq714} and \eqref{eq715} into \eqref{eq713} and letting $\varepsilon\to 0$ we obtain the thesis.
\end{proof}

We are now ready to prove Theorem \ref{teorema4}.
\begin{proof}[Proof of Theorem \ref{teorema4}]
For small enough $\varepsilon>0$ consider the test function $\psi_\varepsilon$ defined in \eqref{eq78}. Define
\begin{equation}\label{eq716}
K_{\varepsilon}:=\tilde{\Omega}_{\varepsilon}\times\left[0,\frac{1}{2}f(\varepsilon)\right]\,;
\end{equation}
and
\begin{equation}\label{eq717}
S_{K_{\varepsilon}}:=\left(\Omega\times\left[0,+\infty\right)\right)\setminus K_{\varepsilon}\,.
\end{equation}
Observe that $\psi_{\varepsilon}\equiv 1$ on $K_{\varepsilon}$, hence we can apply Lemma \ref{lemma72} with the test function $\psi_{\varepsilon}$ and we have
\begin{equation}\label{eq718}
\begin{aligned}
\int_0^{\frac{1}{2}f(\varepsilon)}\int_{\tilde{\Omega}_{\varepsilon}}u^q\,V\,dx dt\,&\le \int_0^{+\infty}\int_{\Omega}u^q\,\psi_{\varepsilon}^{\alpha}V\,dx dt \\
&\le C\left(\int\int_{S_{K_{\varepsilon}}}u^qV\psi^{\alpha}\,dxdt\right)^{\frac 1q}\\&\times
\left\{\left[\int\int_{S_{K_{\varepsilon}}}V^{-\frac{1}{q-1}}\psi_\varepsilon^{\alpha-\frac{2q}{q-1}}\left|\alpha(\alpha-1)|\nabla\psi_\varepsilon|^2+\alpha\psi_\varepsilon\Delta\psi_\varepsilon\right|^{\frac{q}{q-1}}\,dxdt\right]^{\frac{q-1}{q}}\right.\\
&\left.+\left[\int\int_{S_{K_{\varepsilon}}}V^{-\frac{1}{q-1}}\psi_\varepsilon^{\alpha-\frac{2q}{q-1}}\left|\alpha\psi_\varepsilon(\psi_\varepsilon)_t\right|^{\frac{q}{q-1}}\,dxdt\right]^{\frac{q-1}{q}}\right\} \\
&=:C(I_1+I_2)\left(\int\int_{S_{K_{\varepsilon}}}u^qV\psi_\varepsilon^{\alpha}\,dxdt\right)^{\frac 1q}.
\end{aligned}
\end{equation}
Now we can argue as in Lemma \ref{lemma73} and prove that there exists $C>0$ such that
$$
I_1\le C\,,\quad\quad I_2\le C\,.
$$
Thus we have
$$
\int_0^{\frac{1}{2}f(\varepsilon)}\int_{\tilde{\Omega}_{\varepsilon}}u^q\,V\,dx dt\,\le\, C\left(\int\int_{S_{K_{\varepsilon}}}u^qV\,dxdt\right)^{\frac 1q}\,.
$$
Letting $\varepsilon\to0$ by Lemma \ref{lemma73} we obtain
\begin{equation}\label{eq719}
 \int_0^{+\infty}\int_{\Omega}u^q\,V\,dx dt\,\,=0\,,
\end{equation}
which proves the thesis.
\end{proof}

\begin{proof}[Proof of Corollary \ref{corollario4}] By \eqref{eq37} and the assumptions on $f$, for $\varepsilon>0$ small enough we have
\begin{equation*}
\begin{aligned}
\int_0^{f(\varepsilon)}\int_{\Omega_\frac{\varepsilon}{2}\setminus\Omega_{\varepsilon}}V^{-\frac{1}{q-1}}\,dx dt&\le\, C f(\varepsilon)\int_{\Omega_\frac{\varepsilon}{2}\setminus\Omega_{\varepsilon}}d(x)^{\frac{q+1}{q-1}}f(d(x))^{-1}\,dx\\
&\le\, C \varepsilon^{\frac{q+1}{q-1}}\int_{\Omega_\frac{\varepsilon}{2}\setminus\Omega_{\varepsilon}}dx\,\,\leq\,C\varepsilon^{\frac{2q}{q-1}}
\end{aligned}
\end{equation*}
and
\begin{equation*}
\begin{aligned}
\int_{\frac{1}{2}f(\varepsilon)}^{f(\varepsilon)}\int_{\Omega_\frac{\varepsilon}{2}}V^{-\frac{1}{q-1}}\,dx dt&\le\, C f(\varepsilon)\int_{\Omega_\frac{\varepsilon}{2}}d(x)^{\frac{q+1}{q-1}}f(d(x))^{-1}\,dx\\
&\le\, C f(\varepsilon)\,\,\leq\,C f(\varepsilon)^{\frac{q}{q-1}}.
\end{aligned}
\end{equation*}
Thus conditions \eqref{eq218} are satisfied and by Theorem \ref{teorema4} $u\equiv0$ a.e. in $\Omega\times[0,\infty).$
\end{proof}

%
%

\bigskip
\bigskip
\bigskip

\textbf{Acknowledgements.} The authors are members of the Gruppo Nazionale per l'Analisi Mate\-ma\-ti\-ca, la Probabilit\`{a}
e le loro Applicazioni (GNAMPA) of the Istituto Nazionale di Alta Matematica (INdAM). D.D.M. and F.P. are partially supported by 2020 GNAMPA project ``Equazioni Ellittiche e Paraboliche ed Analisi Geometrica''. F.P. is supported by the PRIN-201758MTR2 project ``Direct and inverse problems for partial differential equations: theoretical aspects and applications.''


\begin{thebibliography}{5}
\bibitem{BPT} C. Bandle, M.A. Pozio, A. Tesei, ``The Fujita exponent for the Cauchy problem in the hyperbolic space''. J. Differ. Eq, {\bf 251} \rm (2011) 2143--2163.
\bibitem{Ba} C. Bandle, V. Moroz, W. Reichel, ``Large solutions to semilinear elliptic equations with Hardy potential and exponential nonlinearity''. Around the Research of Vladimir Maz'ya II. International Mathematical Series, {\bf vol. 12} Springer New York (2010).
\bibitem{Da} E. B. Davies, ``The equivalence of certain heat kernel and Green function bounds''. Journal of Functional Analysis, {\bf 71} (1987), 88--103.
\bibitem{Dam} L. D'Ambrosio, V. Mitidieri, ``A priori estimates, positivity results and nonexistence theorems for quasilinear degenerate elliptic inequalities''. Adv. Math., {\bf 224} (2010), 967--1020.
\bibitem{FMT} S. Filippas, L. Moschini, A. Tertikas, ``Sharp two-sided kernel estimates for critical Schr\:odinger operators on bounded domains''. Communications in Mathematical Physics, {\bf 273} (2007), 237--281.
\bibitem{Foote} R. L. Foote, ``Regularity of the distance function''. Proceedings of the America Mathematical Society, {\bf 92} (1984), 153--155.
\bibitem{G1} V.A. Galaktionov, ``Conditions for the absence of global solutions for a class of quasilinear parabolic equations''. Zh. Vychisl. Mat. i Mat. Fiz., {\bf 22} (1982), 322--338.
\bibitem{G2} V.A. Galaktionov, ``Blow-up for quasilinear heat equations with critical Fujita's exponents''. Proc. R. Soc. Edinb. Sect. A, {\bf 124} (1994), 517--525.
\bibitem{G3} V.A. Galaktionov, H-A- Levine ``A general approach to critical Fujita exponents in nonlinear parabolic problems''. Nonlinear Anal., {\bf 34} (1998), 1005--1027.
\bibitem{Grig1} Grigor'yan, A., Kondratiev, V.A, ``On the existence of positive solutions of semilinear elliptic inequalities on Riemannian manifolds''. In: Around the Research of Vladimir Maz'ya. II. Int. Math. Ser. (N.Y.), {\bf 12} \rm (2010) 203--218. Springer, New York.
\bibitem{Grig2}  Grigor'yan, A., Sun, Y., ``On non-negative solutions of the inequality $\Delta u + u\sigma \le 0$ on Riemannian manifolds''. Commun. Pure Appl. Math. {\bf 67}, \rm (2014) 1336--1352.
\bibitem{Hu} H. Hueber, M. Sieveking, ``Uniform bounds for quotients of Green functions in $C^{1,1}$ domains''. Ann. Inst. Fourier (Grenoble), {\bf 32} (1982), 105--117.
\bibitem{Krantz} S. G. Krantz, H. R. Parks, ``Distance to $C^k$ Hypersurfaces''. Journal of Differential Equations, {\bf 40} (1981), 116--120.
\bibitem{MMP2} P. Mastrolia, D.D. Monticelli, F. Punzo, ``Non existence results for elliptic differential inequalities with a potential on Riemannian manifolds''. Calc. Var. PDE, \bf 54 \rm (2015) 1345--1372.
\bibitem{MaMoPu} P. Mastrolia, D.D. Monticelli, F. Punzo, ``Nonexistence of solutions to parabolic differential inequalities with a potential on Riemannian manifolds''. Mathematische Annalen, {\bf 367} (2017), 929--963.
\bibitem{MitPoh1} E. Mitidieri, S.I. Pohozaev, ``Absence of global positive solutions of quasilinear elliptic inequalities''. Dokl. Akad. Nauk., {\bf 359} (1998), 456--460.
\bibitem{MitPoh2} E. Mitidieri, S.I. Pohozaev, ``Nonexistence of positive solutions for quasilinear elliptic problems in $\R^N$''. Tr. Mat. Inst. Steklova, {\bf 227} (1999), 192--222.
\bibitem{Mitidieri} E. Mitidieri, S.I. Pohozaev, ``A priori estimates and the absence of solutions of nonlinear partial differential equations and inequalities''. Tr. Mat. Inst. Steklova, {\bf 234} (2001), 1--384.
\bibitem{Mitidieri2} E. Mitidieri, S.I. Pohozaev, ``Towards a unified approach to nonexistence of solutions for a class of differential inequalities''. Milan J. Math., {\bf 72} (2004), 129--162.
\bibitem{MoPu} D.D. Monticelli, F. Punzo, ``Nonexistence results to elliptic differential inequalities with a potential in bounded domains''. Discrete and Continuous Dynamical System, Vol. {\bf 38} No. 2 (2018) 675--695.
\bibitem{Poho} S.I. Pohozaev, A. Tesei ``Nonexistence of local solutions to semilinear partial differential inequalities''. Ann. Inst. H. Poincare Anal. Non Linear, {\bf 21} (2004), 487--502.
\bibitem{PPT} M.A. Pozio, F. Punzo, A. Tesei, ``Criteria for well-posedness of degenerate elliptic and parabolic problems''. Journal of Math\'ematiques Pures et Appliqu\'etes, 2008.
\bibitem{Punzo} F. Punzo, ``Blow-up of solutions to semilinear parabolic equations on Riemannian manifolds with
negative sectional curvature''. J. Math Anal. Appl, \bf 387 \rm (2012), 815--827.
\bibitem{PT}F. Punzo, A. Tesei, ``On a semilinear parabolic equation with inverse-square potential''. Atti Accad. Naz. Lincei Cl. Sci. Fis. Mat. Natur. Rend. Lincei (9) Mat. Appl. {\bf 21}, (2010), 359--396.
\bibitem{Stein} E.M. Stein, ``Singular integrals and differentiability propeties of functions''. Princeton University Press, (1970).
\bibitem{Sun2} Y. Sun, ``Uniqueness results for nonnegative solutions to semilinear inequalities on Riemannian manifolds''. J. Math. Anal. Appl., {\bf 419} (2014) 646--661.
\bibitem{Sun3} Y. Sun, ``On nonexistence of positive solutions of quasilinear inequality on Riemannian manifolds''. Proc. Amer. Math. Soc., {\bf 143} (2015) 2969--2984.
\bibitem{Sun1} Q. Gu, Y. Sun, J. Xiao, F. Xu, "Global positive solution to a semi-linear parabolic equation with potential on Riemannian manifold'. Calc. Var., {\bf 59} (2020) 170.
\bibitem{Zhang2} Q. S. Zhang, ``A new critical phenomenon for semilinear parabolic problems''. Journal of Mathematical Analysis and Applications, {\bf 219} (1998), 125--139.
\bibitem{Zhang} Q. S. Zhang, ``Blow-up results for nonlinear parabolic equations on manifolds''. Duke Mathematical Journal, {Vol. 97, No. 3} (1999), 515--539.
\bibitem{Z} Z. Zhao, ``Green function for Schrodinger operator and conditioned Feymann-Kac Guage''. Journal of Mathematical Analysis and Applications, {\bf 116} (1986), 309--334.

\end{thebibliography}
\end{document}